\newcommand\da{\!\downarrow\!}
\newcommand\lra{\longrightarrow}
\newcommand\ten{\otimes}
\newcommand\hten{\hat{\otimes}}
\newcommand\eps{\epsilon}
\renewcommand\H{\mathrm{H}}
\newcommand\z{\mathrm{Z}}
\newcommand\HH{\mathrm{HH}}
\newcommand\Z{\mathbb{Z}}
\newcommand\Q{\mathbb{Q}}
\newcommand\Cx{\mathbb{C}}
\newcommand\bA{\mathbb{A}}
\newcommand\bG{\mathbb{G}}
\newcommand\bL{\mathbb{L}}
\newcommand\cA{\mathcal{A}}
\newcommand\cP{\mathcal{P}}
\newcommand\cS{\mathcal{S}}
\newcommand\cV{\mathcal{V}}
\renewcommand\O{\mathscr{O}}
\newcommand\sB{\mathscr{B}}
\newcommand\sD{\mathscr{D}}
\newcommand\sE{\mathscr{E}}
\newcommand\sL{\mathscr{L}}
\newcommand\sM{\mathscr{M}}
\newcommand\sO{\mathscr{O}}
\newcommand\sR{\mathscr{R}}
\newcommand\sT{\mathscr{T}}
\newcommand\fX{\mathfrak{X}}
\renewcommand\hom{\mathscr{H}\!\mathit{om}}
\newcommand\cHom{\mathcal{H}\!\mathit{om}}
\newcommand\cDiff{\mathcal{D}\!\mathit{iff}}
\newcommand\Alg{\mathrm{Alg}}
\newcommand\CAlg{\mathrm{CAlg}}
\newcommand\Hom{\mathrm{Hom}}
\newcommand\HHom{\underline{\mathrm{Hom}}}
\newcommand\Ext{\mathrm{Ext}}
\newcommand\Diff{\mathrm{Diff}}
\newcommand\im{\mathrm{Im\,}}
\newcommand\Ob{\mathrm{Ob}}
\newcommand\ob{\mathrm{ob}}
\newcommand\Co{\mathrm{Co}}
\newcommand\CoS{\mathrm{CoS}}
\newcommand\Spec{\mathrm{Spec}\,}
\newcommand\Set{\mathrm{Set}}
\newcommand\Aff{\mathrm{Aff}}
\newcommand\Sp{\mathrm{Sp}}
\newcommand\PreSp{\mathrm{PreSp}}
\newcommand\Pol{\mathrm{Pol}}
\newcommand\Comp{\mathrm{Comp}}
\newcommand\nondeg{\mathrm{nondeg}}
\newcommand\ad{\mathrm{ad}}
\renewcommand\>{\rangle}
\newcommand\Lim{\varprojlim}
\newcommand\LLim{\varinjlim}
\DeclareMathOperator*{\holim}{holim}
\newcommand\ho{\mathrm{ho}\!}
\newcommand\onto{\twoheadrightarrow}
\newcommand\xra{\xrightarrow}
\newcommand\pr{\mathrm{pr}}
\newcommand\bt{\bullet}
\newcommand\by{\times}
\newcommand\mc{\mathrm{MC}}
\newcommand\mmc{\underline{\mathrm{MC}}}
\newcommand\Rees{\mathrm{Rees}}
\newcommand\Symm{\mathrm{Symm}}
\newcommand\et{\acute{\mathrm{e}}\mathrm{t}}
\newcommand\Tot{\mathrm{Tot}\,}
\newcommand\pd{\partial}
\newcommand\half{\frac{1}{2}}
\newcommand\gr{\mathrm{gr}}
\newcommand\Fil{\mathrm{Fil}}
\newcommand\red{\mathrm{red}}
\newcommand\Lie{\mathrm{Lie}}
\newcommand\dR{\mathrm{dR}}
\newcommand\DR{\mathrm{DR}}
\newcommand\op{\mathrm{opp}}
\newcommand\co{\colon\thinspace}
\newcommand\oR{\mathbf{R}}
\newcommand\oL{\mathbf{L}}
\newcommand\uleft\underleftarrow
\newcommand\uline\underline
\newcommand\uright\underrightarrow
\newtheorem{theorem}{Theorem}[section]
\newtheorem{proposition}[theorem]{Proposition}
\newtheorem{lemma}[theorem]{Lemma}
\newtheorem*{theorem*}{Theorem}
\newtheorem*{proposition*}{Proposition}
\newtheorem*{corollary*}{Corollary}
\newtheorem*{lemma*}{Lemma}
\newtheorem*{conjecture*}{Conjecture}
\theoremstyle{definition}
\newtheorem{definition}[theorem]{Definition}
\newtheorem*{definition*}{Definition}
\newtheorem*{notation*}{Notation}
\theoremstyle{remark}
\newtheorem{example}[theorem]{Example}
\newtheorem{remark}[theorem]{Remark}
\newtheorem{remarks}[theorem]{Remarks}
\newtheorem{properties}[theorem]{Properties}
\newtheorem*{example*}{Example}
\newtheorem*{examples*}{Examples}
\newtheorem*{remark*}{Remark}
\newtheorem*{remarks*}{Remarks}
\newtheorem*{exercise*}{Exercise}
\newtheorem*{property*}{Property}
\newtheorem*{properties*}{Properties}
\begin{document}

\begin{abstract}
We formulate a notion of $E_0$ quantisation of $(-1)$-Poisson structures on derived Artin $N$-stacks, and
construct a map from   $E_0$ quantisations of $(-1)$-shifted symplectic structures to power series in  de Rham cohomology. For a square root of the dualising line bundle, this gives an equivalence between even power series and  self-dual  quantisations. In particular, there is a canonical quantisation of any such square root, which  localises to recover the perverse sheaf of vanishing cycles on derived DM stacks, thus giving a form of derived  categorification of Donaldson--Thomas invariants.
\end{abstract}

\title[Quantisation for $(-1)$-shifted symplectic structures]{Deformation quantisation for $(-1)$-shifted symplectic structures and vanishing cycles}
\author{J.P.Pridham}
\thanks{This work was supported by  the Engineering and Physical Sciences Research Council [grant number EP/I004130/2].}

\subjclass[2010]{14F05 (primary), and 14A20, 18G30, 32S30 (secondary)}

\maketitle

\section*{Introduction}

In \cite{PTVV}, the notion of shifted symplectic structures on derived stacks was introduced. Examples of derived stacks carrying $(-1)$-shifted symplectic structures are Lagrangian intersections  and character varieties of threefolds. In \cite{BBDJS}, a perverse sheaf of vanishing cycles was then constructed on any oriented $(-1)$-shifted symplectic derived $\Cx$-scheme, giving a categorification of Behrend's function \cite{behrendDTmicrolocal}, and hence of Donaldson--Thomas invariants when applied to the derived moduli stack  of sheaves on a Calabi--Yau threefold. 

As discussed in \cite{toenICM}, there is a general philosophy that $n$-shifted symplectic structures should give rise to $E_{n+1}$-algebra quantisations of the structure sheaf. For $n=-1$, this means some sort of deformation as an abelian group, and there was an expectation that this should be related to the perverse sheaf  $\cP\cV$ of \cite{BBDJS} and to BV algebras as in \cite[\S 3.4.3]{behrendICM}. In this paper, we realise these expectations by formulating and establishing $E_0$-quantisation as a genuinely derived object for each oriented $(-1)$-shifted symplectic structure, and show that its localisation  recovers $\cP\cV$.

A $(-1)$-shifted Poisson structure on a derived scheme $X$ is a Maurer--Cartan element $\pi= \sum_{i\ge 2} \pi_i$ with $\pi_i \in \Symm^i \sT_X$. For a line bundle $\sL$, we define an $E_0$ quantisation of $(\pi, \sL)$ to be a Maurer--Cartan element $\Delta = \sum_{i\ge 2} \Delta_i \hbar^{i-1} \in \sD_X(\sL)\llbracket\hbar\rrbracket$ such that $\Delta_i$ is a differential operator of order $i$ lifting $\pi_i$. 

The proof in \cite{poisson} of the correspondence between $n$-shifted symplectic and non-degenerate Poisson structures relied on the existence, for all Poisson structures $\pi$, of a map $\mu(-,\pi)$ from the de Rham algebra to the algebra $T_{\pi}\widehat{\Pol}(X,n)$  of shifted polyvectors with differential twisted by $\pi$. Since $[\pi,-]$ defines a derivation from $\sO_X$ to  $T_{\pi}\widehat{\Pol}(X,n)$, it determines a map $\Omega^1_X \to T_{\pi}\widehat{\Pol}(X,n)[1]$,  and    $\mu(-,\pi)$ is the resulting morphism of CDGAs.

We adapt this idea to construct (Lemma  \ref{QPolmudef}), for any $E_0$ quantisation $\Delta$ of a CDGA $A$, an $A_{\infty}$-morphism $\mu(-,\Delta)$ from the de Rham algebra $\DR(A)$ to $\sD_A\llbracket\hbar\rrbracket$. Roughly speaking, this is an $A$-algebra homomorphism, with the restriction to  $\Omega^1_A$ corresponding to the derivation $[\Delta, -]$; because $\sD_A$ is not commutative, we have to define $\mu$ explicitly on an associative algebra resolution of the de Rham algebra.

This gives rise to a notion of compatibility between $E_0$ quantisations $\Delta$ and generalised $(-1)$-shifted pre-symplectic structures (power series $\omega$ of elements of the de Rham complex): we say that $\omega$ and $\Delta$ are compatible if
\[
 \mu(\omega, \Delta) \simeq \hbar^2  \frac{\pd \Delta}{\pd \hbar}. 
\]
Proposition \ref{QcompatP1} shows that every non-degenerate affine quantisation $\Delta$ has a unique compatible generalised pre-symplectic structure, thus giving us a map
\[
 Q\cP(A,-1)^{\nondeg} \to \H^1(F^2\DR(A)) \by \hbar\H^1(\DR(A))\llbracket \hbar\rrbracket
\]
on the space of non-degenerate $E_0$ quantisations.

In fact, much more is true. We have spaces $Q\cP(A,-1)/G^{k+1}$ consisting of $E_0$ quantisations of order $k$, 
by which we mean Maurer--Cartan elements in $\prod_{j \ge 2} (F_j\sD_A/F_{j-k-1}) \hbar^{j-1}$, for $F$  the order filtration on $\sD$. We then have maps
\[
 Q\cP(A,-1)^{\nondeg}/G^{k+1} \to \H^1(F^2\DR(A)) \by \hbar \H^1(\DR(A)) [\hbar]/\hbar^k,
\]
and Proposition \ref{quantprop} shows  that the resulting map
\[
 Q\cP(A,-1)^{\nondeg} \to (Q\cP(A,-1)^{\nondeg}/G^2) \by \hbar^2\H^1(\DR(A))\llbracket \hbar\rrbracket
\]
underlies an equivalence. Thus  quantisation reduces to a first order problem.

Section \ref{DMsn} interposes some abstract nonsense to transfer these results from affine derived schemes to derived DM $N$-stacks (Propositions \ref{QcompatP2} and \ref{quantprop2}). Section \ref{Artinsn} extends the results of \S \ref{affinesn} to the formalism of commutative bidifferential bigraded algebras,
and thus to derived Artin $N$-stacks (Proposition \ref{prop3}).

 When $\sL$ is Grothendieck--Verdier self-dual (i.e. a square root of the dualising line bundle $\omega_X$), or more generally whenever $\sD(\sL) \simeq \sD(\sL)^{\op}$, \S \ref{sdsn} introduces a notion of self-duality for  quantisations $\Delta$ of $\sL$. For self-dual quantisations, the first order obstruction vanishes, and in fact the equivalence class of such quantisations of a non-degenerate $(-1)$-shifted  Poisson structure is canonically isomorphic to 
\[
 \hbar^2\H^1_{\dR}(X) \llbracket \hbar^2\rrbracket.
\]
In particular, there is an $\infty$-functor from the space of $(-1)$-shifted symplectic structures to deformation quantisations of $\sL$; for the symplectic structure on a derived critical locus, this quantisation is just given by a twisted Hodge complex, so (Proposition \ref{PVprop}) localising at $\hbar$ recovers the perverse sheaf of vanishing cycles studied in \cite{BBDJS}.

In \S \ref{nonnegsn}, we discuss how to adapt these results  to quantisation of $n$-shifted symplectic structures for $n \ge -2$. For $n=-2$, quantisations should arise as Maurer--Cartan elements of a $BV$-algebra quantisation of $-2$-shifted polyvectors. For $n\ge 0$, formality of the  $E_{n+2}$ operad  allows the construction of a compatibility map $\mu$ from de Rham cohomology to quantised Poisson cohomology,  leading to a map from non-degenerate $E_{n+1}$-quantisations of $\sO_X$ to 
power series 
\[
 \H^{n+2}(F^2\DR(A)) \by \hbar\H^{n+2}(\DR(A))\llbracket \hbar\rrbracket.
\]
 Again, the only obstruction to deforming a non-degenerate Poisson structure is first order, and we sketch a notion of self-duality under which the obstruction vanishes --- for details, see \cite{DQnonneg}. 

\tableofcontents

\section{Compatible quantisations on derived affine schemes}\label{affinesn}

In this section, we develop the notion of compatibility between $E_0$ quantisations and generalised $(-1)$-shifted pre-symplectic structures on derived affine schemes, ultimately reducing deformation quantisation to a first-order problem.

Let $R$ be a graded-commutative differential algebra (CDGA) over $\Q$, and fix a CDGA $A$ over $R$. We will denote the differentials on $A$ and $R$  by $\delta$. 

Given $R$-modules $M,N$ in cochain complexes, we write $\HHom_R(M,N)$ for the cochain complex given by
\[
  \HHom_R(M,N)^i= \Hom_{R^{\#}}(M^{\#},N^{\#[i]}),
\]
 with differential $\delta f= \delta_N \circ f \pm f \circ \delta_M$,
 where $V^{\#}$ denotes the graded module underlying a cochain complex $V$.

\subsection{Differential operators and quantised polyvectors}

We now formulate a theory of differential operators for our CDGA $A$, leading to a notion of deformation quantisation for a $(-1)$-shifted Poisson structure.

\subsubsection{Differential operators}

\begin{definition}
Given  $A$-modules $M,N$ in cochain complexes, inductively define the filtered cochain complex $\Diff(M,N)= \Diff_{A/R}(M,N)\subset \HHom_R(M,N)$ of differential operators from $M$ to $N$  by setting
\begin{enumerate}
 \item $F_0 \Diff(M,N)= \HHom_A(M,N)$,
\item $F_{k+1} \Diff(M,N)=\{ u \in \HHom_R(M,N)~:~  [a,u]\in F_{k} \Diff(M,N)\, \forall a \in A \}$, where $[a,u]= au- (-1)^{\deg a\deg u} ua$.
\item $\Diff(M,N)= \LLim_k F_k\Diff(M,N)$.
\end{enumerate}
(The reason for the notation $F$ is that $F^p:= F_{-p}$ gives a Hodge filtration.)
\end{definition}

The definitions ensure that the associated gradeds $\gr^F_k\Diff_A(M,N)$ have the structure of $A$-modules. Also note that for any $u \in F_{k+1} \Diff(M,N)$ the commutator $[u,-]$ defines a derivation from $A$ to $\gr^F_k \Diff(M,N)$, giving an $A$-linear map
\[
 \gr^F_{k+1} \Diff(M,N) \to \HHom_A(\Omega^1_A,\gr^F_k \Diff(M,N)). 
\]
Proceeding inductively and considering the symmetries involved, this gives  maps
\[
 \gr^F_{k} \Diff(M,N) \to \HHom_A(M\ten_A\CoS^k_A\Omega^1_A,N)
\]
for all $k$. [Here,  $\CoS_A^p(M) =\Co\Symm^p_A(M)= (M^{\ten_A p})^{\Sigma_p}$ and $\Co\Symm_A(M) = \bigoplus_{p \ge 0}\CoS_A^p(M)$.]

 These maps will be isomorphisms whenever $A$ is semi-smooth in the sense that the underlying  graded algebra $A^{\#}$  is isomorphic to $(R^{\#}\ten_{R^0}S)[P^{\#}]$, for $S$ a smooth $R^0$-algebra and $P^{\#}$   a graded projective module over $R^{\#}\ten_{R^0}S$ (in particular, if $A$ is cofibrant as a CDGA over $R$), and $M^{\#}$ is projective over $A^{\#}$.

Also observe that for $A$-modules $M,N,P$, the composition map $\HHom_R(N,P)\ten_R\HHom_R(M,N)\xra{\circ} \HHom_R(M,P)$ restricts to give $F_l\Diff_{A/R}(N,P)\ten_RF_k\Diff_{A/R}(M,N) \to F_{k+l}\Diff_{A/R}(M,P)$.

\begin{definition}
Given  an $A$-module $M$ in cochain complexes, write $\sD(M)= \sD_{A/R}(M):= \Diff_{A/R}(M,M)$,   which we regard as a  DGAA under the composition above. We simply write $\sD_A= \sD_{A/R}$ for  $\sD_{A/R}(A,A)$. 
\end{definition}

For $A$-modules $M,N$, inclusion in $\Hom_R(M,N)$ gives a natural map $\HHom_A(M, (N\ten_A\sD_A)^r) \to   \Diff(M,N) $, where $(-)^r$ denotes the right $A$-module structure.  This will be an isomorphism whenever $A$ is semi-smooth over $R$  (in particular, if $A $ is cofibrant as a CDGA over $R$). 

\begin{remark}\label{alternateDAremark}
For $A$ semi-smooth over $R$, there is an equivalent alternative description of $\Diff(M,N)$, familiar from the underived setting. The algebra $A\ten_RA$ is naturally an $A$-bimodule, and if we write $I$ for the diagonal ideal $\ker(A\ten_RA \to A)$, then there are natural isomorphisms
\[
\alpha \co \HHom_{A}( (A\ten_RM)/ \overbrace{I \cdots I}^{k+1}M, N) \to F_k\Diff(M,N),
\]
given by $\alpha(f)(m):= f(1\ten m)$.
\end{remark}

\subsubsection{Polyvectors}

The following is adapted from \cite[Definition \ref{poisson-poldef}]{poisson}, with the introduction of a dummy variable $\hbar$ of cohomological degree $0$.

\begin{definition}\label{poldef}
Define the complex of $(-1)$-shifted polyvector fields  on $A$ by
\[
 \widehat{\Pol}(A/R,-1):= \prod_{p \ge 0}\HHom_A(\CoS_A^p(\Omega^1_{A/R}),A)\hbar^{p-1}. 
\]
with graded-commutative  multiplication $(a,b)\mapsto \hbar ab$  following the usual conventions for symmetric powers.  

The Lie bracket on $\Hom_A(\Omega^1_{A/R},A)$ then extends to give a bracket (the Schouten--Nijenhuis bracket)
\[
[-,-] \co \widehat{\Pol}(A/R,-1)\by \widehat{\Pol}(A/R,-1)\to \widehat{\Pol}(A/R,-1),
\]
determined by the property that it is a bi-derivation with respect to the multiplication operation. 

Thus  $\widehat{\Pol}(A/R,-1)$ has the natural structure of a $P_1$-algebra  (i.e. a Poisson algebra), and in particular $\widehat{\Pol}(A/R,-1)$ is a differential graded Lie algebra (DGLA) over $R$.

Note that the differential $\delta$ on $\widehat{\Pol}(A/R,-1)$ can be written as $[\delta,-]$, where $\delta \in \widehat{\Pol}(A/R,-1)^1$ is the element defined by the derivation $\delta$ on $A$.
\end{definition}

Strictly speaking, $\widehat{\Pol}$ is the complex of multiderivations, as polyvectors are usually defined as symmetric powers of the tangent complex. The two definitions agree (modulo completion) whenever the tangent complex is perfect, and Definition \ref{poldef} is the more natural object when the definitions differ. 

\begin{definition}\label{Fdef}
Define a decreasing filtration $F$ on  $\widehat{\Pol}(A/R,-1)$ by 
\[
 F^i\widehat{\Pol}(A/R,-1):= \prod_{j \ge i}\HHom_A( \CoS_A^j\Omega^1_{A/R},A)\hbar^{j-1};
\]
this has the properties that $\widehat{\Pol}(A/R,-1)= \Lim_i \widehat{\Pol}(A/R,-1)/F^i$, with $[F^i,F^j] \subset F^{i+j-1}$, $\delta F^i \subset F^i$, and $ F^i F^j \subset \hbar^{-1} F^{i+j}$.
\end{definition}

Observe that this filtration makes $F^2\widehat{\Pol}(A/R,-1)$ into a pro-nilpotent DGLA.

\begin{definition}\label{Tpoldef0}
 Define the tangent DGLA of polyvectors by 
\[
 T\widehat{\Pol}(A/R,-1):= \widehat{\Pol}(A/R,-1)\oplus \prod_{p \ge 0}\HHom_A(\CoS_A^p(\Omega^1_{A/R}),A)\hbar^{p}\eps,
\]
for $\eps$ of degree $0$ with $\eps^2=0$. The Lie bracket is given by $ [u+v\eps, x+y\eps]= [u,x]+ [u,y]\eps + [v,x]\eps$.
\end{definition}

\begin{definition}\label{Tpoldef}
Given a Maurer--Cartan element $\pi \in  \mc(F^2\widehat{\Pol}(A/R,-1)) $, define 
\[
 T_{\pi}\widehat{\Pol}(A/R,-1):= \prod_{p \ge 0}\HHom_A(\CoS_A^p(\Omega^1_{A/R}),A)\hbar^{p},
\]
with derivation $\delta + [\pi,-]$ (necessarily square-zero by the Maurer--Cartan conditions). 

The product on polyvectors makes this a  CDGA (with no need to rescale the product by $\hbar$), and it inherits the filtration $F$ from $\widehat{\Pol}$. 

Given $\pi \in \mc(F^2\widehat{\Pol}(A/R,-1)/F^p)$, we define $T_{\pi}\widehat{\Pol}(A/R,-1)/F^p$ similarly. This is a CDGA  because $F^i\cdot F^j \subset F^{i+j}$. 
\end{definition}

Regarding $ T_{\pi}\widehat{\Pol}(A/R,-1)$ as an abelian DGLA, observe that $\mc(T_{\pi}\widehat{\Pol}(A/R,-1))$ is just the fibre of $\mc( T\widehat{\Pol}(A/R,-1)) \to  \mc(\widehat{\Pol}(A/R,-1))$ over $\pi$. Evaluation at $\hbar=1$ gives an  isomorphism from $T\widehat{\Pol}(A/R,-1)$ to the DGLA   $\widehat{\Pol}(A/R,-1)\ten_{\Q} \Q[\eps]$ of \cite[\S \ref{poisson-polsn}]{poisson}, and the map $\sigma$ of \cite[Definition \ref{poisson-sigmadef}]{poisson} then becomes:

\begin{definition}\label{sigmadef}
Define 
\[
\sigma =- \pd_{\hbar^{-1}} \co \widehat{\Pol}(A/R,-1) \to T\widehat{\Pol}(A/R,-1)
\]
 by $\alpha \mapsto \alpha + \eps \hbar^{2}\frac{\pd \alpha}{\pd \hbar}$. Note that this is a morphism of filtered  DGLAs, so gives a map $\mc(F^2\widehat{\Pol}(A/R,-1)) \to \mc(F^2T\widehat{\Pol}(A/R,-1))$, with $ \sigma(\pi) \in \z^1(F^2T_{\pi}\widehat{\Pol}(A/R,-1))$. 
\end{definition}

\subsubsection{Quantised $(-1)$-shifted polyvectors}\label{qpolsn}

\begin{definition}\label{strictlb}
 Define a strict line bundle over $A$ to be an $A$-module $M$ in cochain complexes such that $M^{\#}$ is a projective module of rank $1$ over the graded-commutative algebra $A^{\#}$ underlying $A$.  Given $b \in \z^1A$, define $A_b$ to be the strict line bundle $(A, \delta +b)$.
\end{definition}
(When $A$ has elements of positive degree, note that $M$ might not be cofibrant in the projective model structure, but this does not affect anything.)

\begin{definition}\label{qpoldef}
Given a strict line bundle $M$ over $A$, define the complex of quantised $(-1)$-shifted polyvector fields  on $M$ by
\[
 Q\widehat{\Pol}(M,-1):= \prod_{p \ge 0}F_p\sD_{A/R}(M)\hbar^{p-1}. 
\]
Multiplication of differential operators gives us a product
\[
 Q\widehat{\Pol}(M,-1) \by Q\widehat{\Pol}(M,-1) \to \hbar^{-1} Q\widehat{\Pol}(M,-1),
\]
but the associated commutator $[-,-]$ takes values in $Q\widehat{\Pol}(M,-1)$, so $Q\widehat{\Pol}(M,-1)$ is a DGLA over $R$.
\end{definition}

Note that the differential $\delta$ on $Q\widehat{\Pol}(M,-1)$ can be written as $[\delta_M,-]$, where $\delta_M \in F_1\sD_{A/R}(M)^1$ is the element defined by the differential $\delta_M$ on $M$. In particular,  $Q\widehat{\Pol}(A_b,-1)$ is the graded associative algebra $Q\widehat{\Pol}(A,-1)$ with differential $[\delta_A +b , -]$.

\begin{definition}\label{QFdef}
Define a decreasing filtration $\tilde{F}$ on  $Q\widehat{\Pol}(M,-1)$ by 
\[
 \tilde{F}^iQ\widehat{\Pol}(M,-1):= \prod_{j \ge i} F_j\sD_{A/R}(M)\hbar^{j-1};
\]
this has the properties that $Q\widehat{\Pol}(M,-1)= \Lim_i \widehat{\Pol}(M,-1)/\tilde{F}^i$, with $[\tilde{F}^i,\tilde{F}^j] \subset \tilde{F}^{i+j-1}$, $\delta \tilde{F}^i \subset \tilde{F}^i$, and $ \tilde{F}^i \tilde{F}^j \subset \hbar^{-1} \tilde{F}^{i+j}$.
\end{definition}

Observe that this filtration makes $\tilde{F}^2Q\widehat{\Pol}(M,-1)$ into a pro-nilpotent DGLA.

\begin{definition}
When $A$ is cofibrant, we define an $E_0$ quantisation of $M$ over $R$ to be a Maurer--Cartan element
\[
 \Delta \in \mc(\tilde{F}^2Q\widehat{\Pol}(M,-1)).
\]
The associated $R\llbracket\hbar\rrbracket$-module $M_{\Delta}$ is given by $M\llbracket\hbar\rrbracket$ equipped with the differential $\delta_M +\Delta$, which is necessarily square-zero by the Maurer--Cartan condition. We then have $M_{\Delta}/(\hbar M_{\Delta})=M$, because $\hbar\mid \Delta$.  
\end{definition}

\begin{remark}\label{DRrmks}
 A more conceptual way to interpret such $E_0$ quantisations is as deformations of $M$ as a module over  the de Rham pro-algebra $\DR(A/R)$. Such a deformation of $M$  is the same as a deformation of the right $\sD_A$-module $M\ten_A\sD_A$, and a  Maurer--Cartan element $\Delta$ gives a deformation $x \mapsto \delta(x) +  \Delta\cdot x$
of the differential $\delta$ on $M\ten_A\sD_A\llbracket\hbar\rrbracket$. 

Equivalently, a $(-1)$-shifted Poisson structure on $\sO_X$ is the structure of an $\sO_X\llbracket\Omega^1_X[-1]\rrbracket= \hat{\O}_{TX[1]}$-module, and an $E_0$ quantisation is a lifting of such a structure making $\sO_X\llbracket\hbar\rrbracket$ a module over the Rees pro-algebra $\sR:=\prod_{p\in \Z} \hbar^{-p} F^p\DR(A/R)$, via the isomorphism $\sR/\hbar\sR\cong \hat{\O}_{TX[1]}$.

 In many ways, deformations over $\DR(A/R)$ are more natural than $R$-module deformations, because the de Rham algebra is the natural algebraic analogue of the analytic sheaf of complex constants. For $n>0$, $E_n$-algebra deformations over $R$ and over $\DR(A/R)$ are the same by the HKR isomorphism, but for $n=0$ the $\DR(A/R)$-module structure  imposes the condition that deformations be given by differential operators. 
\end{remark}

\begin{remark}\label{BVrmk}
Observe that when the $E_0$ quantisation $\Delta$ is linear in $\hbar$, it is a second-order differential operator. When $M=A$ and $\Delta(1)=0$, this gives exactly the structure of a $BV$-algebra over $R\llbracket\hbar\rrbracket$, the associated Lie bracket being given by the image of $\Delta$ in $\hbar(F_2\sD_A/F_1\sD_A)\cong \hbar\Symm^2_A\sT_A$. In general,  for  any $E_0$ quantisation $\Delta$ of $A$ with  $\Delta(1)=0$, the pair $(A, \Delta)$ is a commutative $BV_{\infty}$-algebra in the sense of \cite[Definition 9]{kravchenko}.
\end{remark}

\begin{remark}
 For unbounded CDGAs, the hypothesis that $A$ be cofibrant  seems unnecessarily strong. Most of our results will hold when $A^{\#}$ is free or even when $A$ is  semi-smooth. This suggests that the most natural notion of equivalence for CDGAs in this setting might not be quasi-isomorphism, but Morita equivalence of derived categories of the second kind (in the sense of \cite{positselskiDerivedCategories}). Dealing with semi-smooth CDGAs might provide an alternative approach to the stacky CDGAs featuring in \S \ref{Artinsn} to study Artin stacks.
\end{remark}

\begin{definition}\label{mcPLdef}
 Given a   DGLA $L$, define the the Maurer--Cartan set by 
\[
\mc(L):= \{\omega \in  L^{1}\ \,|\, d\omega + \half[\omega,\omega]=0 \in   L^{2}\}.
\]

Following \cite{hinstack}, define the Maurer--Cartan space $\mmc(L)$ (a simplicial set) of a nilpotent  DGLA $L$ by
\[
 \mmc(L)_n:= \mc(L\ten_{\Q} \Omega^{\bt}(\Delta^n)),
\]
where 
\[
\Omega^{\bt}(\Delta^n)=\Q[t_0, t_1, \ldots, t_n,\delta t_0, \delta t_1, \ldots, \delta t_n ]/(\sum t_i -1, \sum \delta t_i)
\]
is the commutative dg algebra of de Rham polynomial forms on the $n$-simplex, with the $t_i$ of degree $0$.
\end{definition}
%

\begin{definition}\label{Gdef}
We now define another decreasing filtration $G$ on  $Q\widehat{\Pol}(M,-1)$ by setting
\[
 G^iQ\widehat{\Pol}(M,-1):= \hbar^{i}Q\widehat{\Pol}(M,-1).
\]
We then set $G^i \tilde{F}^p:= G^i \cap \tilde{F}^p$.
\end{definition}

 Note that $G^i \subset \tilde{F}^i$, and  beware that $G^i \tilde{F}^p$ is not the same as $\hbar^{i} \tilde{F}^p$ in general, since 
\begin{align*}
 G^i\tilde{F}^pQ\widehat{\Pol}(M,-1) &= \prod_{j \ge p} F_{j-i}\sD_{A/R}(M)\hbar^{j-1}\\
\hbar^{i}\tilde{F}^pQ\widehat{\Pol}(M,-1) &= \prod_{j \ge p+i} F_{j-i}\sD_{A/R}(M)\hbar^{j-1}.
\end{align*}

We will also consider the convolution  $G*\tilde{F}$, given by $(G*\tilde{F})^p:= \sum_{i+j=p}G^i\cap\tilde{F}^j$ ; explicitly,
\[
(G*\tilde{F})^p Q\widehat{\Pol}(M,-1) = 
\prod_{j<p}F_{2j-p}\sD_{A/R}(M)\hbar^{j-1} \by \prod_{j \ge p} F_j\sD_{A/R}(M)\hbar^{j-1}.
\]
In particular, $(G*\tilde{F})^2 Q\widehat{\Pol}(M,-1) = A \oplus \tilde{F}^2 Q\widehat{\Pol}(M,-1)$.


\begin{definition}\label{Qpoissdef}
Define  the space $Q\cP(M,-1)$ of $E_0$ quantisations of $M$ over $R$  to be given by the simplicial 
set
\[
 Q\cP(M,-1):= \Lim_i \mmc( \tilde{F}^2 Q\widehat{\Pol}(M,-1)/\tilde{F}^{i+2}).
\]
Also write
\[
 Q\cP(M,-1)/G^k:= \Lim_i\mmc(\tilde{F}^2 Q\widehat{\Pol}(M,-1)/(\tilde{F}^{i+2}+G^k)),
\]
 so $Q\cP(M,-1)= \Lim_k Q\cP(M,-1)/G^k$. 

We will also consider twisted quantisations
\[
 Q^{tw}\cP(M,-1):= \Lim_i \mmc( (G*\tilde{F})^2 Q\widehat{\Pol}(M,-1)/\tilde{F}^{i+2});
\]
these are just quantisations of strict line bundles $M\ten_AA_b$ for $b \in \z^1(A)$.
\end{definition}

\subsubsection{The centre of a quantisation}\label{centresn}

\begin{definition}\label{TQpoldef0}
 Define the filtered tangent DGLA of quantised polyvectors by 
\begin{align*}
 TQ\widehat{\Pol}(M,-1)&:= Q\widehat{\Pol}(M,-1)\oplus \prod_{p \ge 0}F_p\sD_{A/R}(M)\hbar^{p}\eps,\\
\tilde{F}^jTQ\widehat{\Pol}(M,-1)&:= \tilde{F}^jQ\widehat{\Pol}(M,-1)\oplus \prod_{p \ge j}F_p\sD_{A/R}(M)\hbar^{p}\eps,
\end{align*}
for $\eps$ of degree $0$ with $\eps^2=0$. The Lie bracket is given by $ [u+v\eps, x+y\eps]= [u,x]+ [u,y]\eps + [v,x]\eps$.
\end{definition}

\begin{definition}\label{TQpoldef}
Given a Maurer--Cartan element $\Delta \in  \mc(\tilde{F}^2Q\widehat{\Pol}(M,-1)) $, define the centre of $(M,\Delta)$ by
\[
 T_{\Delta}Q\widehat{\Pol}(M,-1):= \prod_{p \ge 0}F_p\sD_{A/R}(M)\hbar^{p},
\]
with derivation $\delta + [\Delta,-]$ (necessarily square-zero by the Maurer--Cartan conditions). 
 
Multiplication of differential operators  makes this a  DGAA (with no need to rescale the product by $\hbar$), and it has a filtration
\[
 \tilde{F}^iT_{\Delta}Q\widehat{\Pol}(M,-1):= \prod_{p \ge i}F_p\sD_{A/R}(M)\hbar^{p},
\]
with $\tilde{F}^i \cdot \tilde{F}^j\subset \tilde{F}^{i+j}$.  Given $\Delta \in  \mc(F^2Q\widehat{\Pol}(M,-1)/\tilde{F}^p)$, we define $T_{\Delta}Q\widehat{\Pol}(M,-1)/\tilde{F}^p$ similarly ---  this is also a DGAA as $\tilde{F}^p$ is an ideal.
\end{definition}

Observe that $T_{\Delta}Q\cP(M,-1):= \mmc(\tilde{F}^2T_{\Delta}Q\widehat{\Pol}(M,-1))$ is just the fibre of $\mmc( \tilde{F}^2TQ\widehat{\Pol}(M,-1)) \to  \mmc(\tilde{F}^2 Q\widehat{\Pol}(M,-1))$ over $\Delta$.

Similarly to Definition \ref{Gdef}, there is a filtration $G$ on  
$TQ\widehat{\Pol}(M,-1), T_{\Delta}Q\widehat{\Pol}(M,-1) $ given by powers of $\hbar$. Since $\gr_G^i\tilde{F}^{p-i}Q\widehat{\Pol}= \prod_{j \ge p-i} \gr^F_{j-i}\sD_{A/R}(M)\hbar^{j-1}$,
the associated gradeds of the filtration admit maps 
  \begin{align*}
 \gr_G^i\tilde{F}^pQ\widehat{\Pol}(M,-1) &\to \prod_{j \ge p} \HHom_A(\CoS_A^{j-i}(\Omega^1_{A/R})\ten_AM,M)\hbar^{j-1}\\
\gr_G^i\tilde{F}^pT_{\Delta}Q\widehat{\Pol}(M,-1) &\to \prod_{j \ge p} \HHom_A(\CoS_A^{j-i}(\Omega^1_{A/R})\ten_AM,M)\hbar^{j}.
\end{align*}
which are isomorphisms when $A$ is semi-smooth (in particular whenever $A$ is cofibrant as a CDGA over $R$).

For the filtration $F$ of Definition \ref{Fdef}, we may rewrite these maps as
 \begin{align*}
 \gr_G^i\tilde{F}^pQ\widehat{\Pol}(A,-1) &\to  F^{p-i}\widehat{\Pol}(A,-1)\hbar^{i},\\ 
\gr_G^i\tilde{F}^pT_{\Delta}Q\widehat{\Pol}(M,-1) &\to F^{p-i}T_{\pi_{\Delta}}\widehat{\Pol}(A,-1)\hbar^{i},
\end{align*}
where $\pi_{\Delta} \in \mc(F^2\widehat{\Pol}(A,-1))$ denotes the image of $\Delta$ under the map  $ \gr_G^0\tilde{F}^2Q\widehat{\Pol}(A,-1) \to  F^2\widehat{\Pol}(A,-1)$. 

Since the cohomology groups of $T_{\pi_{\Delta}}\widehat{\Pol}(A,-1)$ are Poisson  cohomology, we will refer to the cohomology groups of  $T_{\Delta}Q\widehat{\Pol}(M,-1)$ as quantised Poisson cohomology.

We write  $T_{\Delta}Q^{tw}\cP(M,-1):= \mmc((G*\tilde{F})^2T_{\Delta}Q\widehat{\Pol}(M,-1))$.

\begin{definition}\label{Qnonnegdef}
 Say that an $E_0$ quantisation $\Delta =  \sum_{j \ge 2} \Delta_j \hbar^{j}$ is non-degenerate if the map 
\[
\Delta_2^{\sharp}\co  M\ten_A \Omega^1_A \to \HHom_A(\Omega^1_A,M)[1]
\]
is a quasi-isomorphism and $\Omega^1_A $ is perfect.
\end{definition}

\begin{definition}\label{TQPdef}
Define the tangent spaces
\begin{eqnarray*}
 TQ\cP(M,-1)&:=& \Lim_i \mmc( \tilde{F}^2 TQ\widehat{\Pol}(M,-1)/\tilde{F}^{i+2}),\\
TQ^{tw}\cP(M,-1)&:=& \Lim_i \mmc( (G*\tilde{F})^2 TQ\widehat{\Pol}(M,-1)/\tilde{F}^{i+2}),\\
\end{eqnarray*}
with $TQ\cP(M,-1)/G^k$, $TQ^{tw}\cP(M,-1)/G^k $ defined similarly.
 \end{definition}
These are simplicial sets over $Q\cP(M,-1)$ (resp.  $Q^{tw}\cP(M,-1)$, $Q\cP(M,-1)/G^k$, $Q^{tw}\cP(M,-1)/G^k$), fibred in simplicial abelian groups. 

\begin{definition}\label{Qsigmadef}
Define the canonical tangent vector
\[
  \sigma=-\pd_{\hbar^{-1}}\co Q\widehat{\Pol}(M,-1) \to TQ\widehat{\Pol}(M,-1)
\]
 by $\alpha \mapsto \alpha + \eps \hbar^{2}\frac{\pd \alpha}{\pd \hbar}$. Note that this is a morphism of filtered  DGLAs, so gives a map $ \sigma \co Q\cP(M,-1)\to TQ\cP(M,-1)$, with $\sigma(\Delta) \in \z^1(\tilde{F}^2T_{\Delta}Q\widehat{\Pol}(M,-1))$. 
\end{definition}

\subsection{Generalised pre-symplectic structures}\label{DRsn}

We now develop  generalised shifted pre-symplectic structures as certain power series in the de Rham complex, leading to a notion of compatibility between these generalised structures and quantisations. 

\begin{definition}
Define the de Rham complex $\DR(A/R)$ to be the product total complex of the bicomplex
\[
 A \xra{d} \Omega^1_{A/R} \xra{d} \Omega^2_{A/R}\xra{d} \ldots,
\]
so the total differential is $d \pm \delta$.

We define the Hodge filtration $F$ on  $\DR(A/R)$ by setting $F^p\DR(A/R) \subset \DR(A/R)$ to consist of terms $\Omega^i_{A/R}$ with $i \ge p$. In particular, $F^p\DR(A/R)= \DR(A/R)$ for $p \le 0$.
\end{definition}

\begin{definition}
When $A$ is a cofibrant CDGA over $R$, recall that  a $(-1)$-shifted pre-symplectic structure $\omega$ on $A/R$ is an element
\[
 \omega \in \z^{1}F^2\DR(A/R).
\]
In \cite{PTVV}, shifted pre-symplectic structures are referred to as closed $2$-forms.

A $(-1)$-shifted pre-symplectic structure $\omega$ is called symplectic if $\omega_2 \in \z^{-1}\Omega^2_{A/R}$ induces a quasi-isomorphism
\[
 \omega_2^{\sharp} \co \Hom_A(\Omega^1_{A/R},A) \to \Omega^1_{A/R}[-1],
\]
and $\Omega^1_{A/R} $ is perfect as an $A$-module.
\end{definition}

In order to define compatibility functors for quantisations, we will need to construct $A_{\infty}$-morphisms from the de Rham algebra, which we will do using the following DGAA resolution.

\begin{definition}\label{DRprimedef}
Write $A^{\ten \bt +1}$ for the cosimplicial CDGA $n \mapsto A^{n +1}$ given by the \v Cech nerve, with $I$ the kernel of the diagonal map $A^{\ten \bt +1}\to A$. This has a filtration $F$ given by powers $F^p :=(I)^p$ of $I$, and we define the filtered cosimplicial CDGA $\hat{A}^{\ten \bt +1}$ to be the completion
\begin{align*}
 \hat{A}^{\ten \bt +1}&:= \Lim_qA^{\ten \bt +1}/F^q,\\
F^p \hat{A}^{\ten \bt +1}&:= \Lim_qF^p/F^q.
\end{align*}

We then take the Dold--Kan conormalisation $N\hat{A}^{\bt +1}$, which becomes a filtered bi-DGAA via the  Alexander--Whitney cup product. Explicitly,  $N^n \hat{A}^{\bt +1}$ is the intersection of the kernels of all the big diagonals $  \hat{A}^{n +1}\to \hat{A}^{n}$, and the cup  product is given by 
\[
 (a_0 \ten \ldots \ten a_m) \smile (b_0 \ten \ldots \ten b_n)= a_0 \ten \ldots \ten a_{m-1}\ten (a_mb_0) \ten b_1 \ten \ldots \ten b_n. 
\]

 We then define $\DR'(A/R)$ to be the product total complex
\[
 \DR'(A/R):=\Tot^{\Pi} N\hat{A}^{\bt +1}
\]
regarded as a filtered DGAA over $R$, with $F^p\DR'(A/R):=\Tot^{\Pi} NF^p\hat{A}^{\bt +1}$.
\end{definition}

The following is standard:
\begin{lemma}\label{cfDRlemma}
 There is a filtered quasi-isomorphism $\DR'(A/R) \to \DR(A/R)$, given by $N^n\hat{A}^{\bt +1}\to 
N^n\hat{A}^{\bt +1}/F^{n+1} \cong (\Omega^1_{A/R})^{\ten_A n} \to  \Omega^n_{A/R}$. 
\end{lemma}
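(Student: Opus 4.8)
The plan is to prove that the map is a \emph{filtered} quasi-isomorphism by passing to associated gradeds. Both $\DR'(A/R)$ and $\DR(A/R)$ are complete for the Hodge filtration $F$, and the comparison map is filtered, so it suffices to check that $\gr^p_F\DR'(A/R)\to\gr^p_F\DR(A/R)$ is a quasi-isomorphism for every $p\ge 0$. On the target this is immediate, since $d$ strictly raises $F$: $\gr^p_F\DR(A/R)\cong\Omega^p_{A/R}[-p]$ with differential $\delta$. For the source, the starting observation is that $N^n\hat{A}^{\bt+1}\subseteq F^n\hat{A}^{\bt+1}$: an element of $\hat{A}^{n+1}$ killed by all $n$ adjacent multiplications $\hat{A}^{n+1}\to\hat{A}^{n}$ vanishes to first order in each successive pair of factors, hence lies in the $n$-th power of the diagonal ideal. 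This is what makes the displayed map of the statement well defined ($N^n\hat{A}^{\bt+1}/F^{n+1}$ being the leading term $\gr^n_F N^n\hat{A}^{\bt+1}\cong(\Omega^1_{A/R})^{\ten_A n}$), and it forces $\gr^p_F N^{\bt}\hat{A}^{\bt+1}$ to be concentrated in cohomological degrees $0\le n\le p$. Since $N$ is exact it commutes with $\gr_F$, so
\[
 \gr^p_F\DR'(A/R)=\Tot^{\Pi}N\bigl([n]\mapsto\gr^p_F\hat{A}^{\bt+1}\bigr).
\]

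Next I would identify the cosimplicial $A$-module $[n]\mapsto\gr^p_F\hat{A}^{\bt+1}$. Because $A$ is cofibrant, $A^{\ten_R n+1}$ is semi-smooth over $R$ and the diagonal ideal $I_n=\ker(A^{\ten_R n+1}\to A)$ behaves as a Koszul-regular ideal, so the $I_n$-adic graded of $\hat{A}^{\ten_R n+1}$ is the completed symmetric algebra on the conormal module $I_n/I_n^2$; degreewise this yields a natural isomorphism of cosimplicial $A$-modules $\gr^p_F\hat{A}^{\bt+1}\cong\CoS^p_A(\gr^1_F\hat{A}^{\bt+1})$ (over $\Q$ the algebra and coalgebra symmetric powers agree). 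The $p=1$ case of the first paragraph already gives $N(\gr^1_F\hat{A}^{\bt+1})\cong\Omega^1_{A/R}$ concentrated in cohomological degree $1$: only $n=1$ contributes, with $\gr^1_F N^1\hat{A}^{\bt+1}=I_1/I_1^2=\Omega^1_{A/R}$.

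The remaining point is to compute the conormalisation of a symmetric power of a cosimplicial $A$-module $Y$ whose normalisation $NY\cong P[-1]$ lies in cohomological degree $1$. For this one uses the standard d\'ecalage identity $N\CoS^p_A(Y)\simeq\Lambda^p_A P[-p]$: since $N$ is exact and $\Q$-linear it commutes with the $\Sigma_p$-symmetrising idempotent, so $N\CoS^p_A(Y)=\bigl(N(Y^{\ten_A p})\bigr)^{\Sigma_p}$; the Eilenberg--Zilber quasi-isomorphism $N(Y^{\ten_A p})\simeq(NY)^{\ten_A p}$ intertwines the natural $\Sigma_p$-action on the left with the \emph{Koszul}-signed one on the right; and the sign-twisted invariants of $(P[-1])^{\ten_A p}=P^{\ten_A p}[-p]$ are $\Lambda^p_A P[-p]$. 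Applying this with $Y=\gr^1_F\hat{A}^{\bt+1}$, $P=\Omega^1_{A/R}$ gives $\gr^p_F\DR'(A/R)\simeq\Lambda^p_A\Omega^1_{A/R}[-p]=\Omega^p_{A/R}[-p]$, and a final check shows that the explicit map of the statement --- leading-term projection followed by antisymmetrisation --- induces precisely this equivalence, since that antisymmetrisation is the shuffle component of the Eilenberg--Zilber map. Hence the map is a quasi-isomorphism on each $\gr^p_F$, and therefore a filtered quasi-isomorphism.

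The part I expect to be the main obstacle is this last step: pinning down the d\'ecalage isomorphism with its Koszul signs and checking that the concrete comparison map of the statement --- rather than some abstract zig-zag --- induces it; all the rest is completeness bookkeeping or a direct consequence of the cofibrancy of $A$. A less clean alternative would be to reduce, by a model-category argument, to the case where $A$ is an ordinary smooth $R^0$-algebra, where this is the classical comparison of infinitesimal and de Rham cohomology via the formal neighbourhood of the diagonal; but passing through $\gr_F$ as above keeps the argument filtered, functorial, and valid in the derived generality needed later in the paper.
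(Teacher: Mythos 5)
Your argument is correct and follows essentially the same route as the paper: reduce to the associated gradeds of $F$, identify $\gr_F^p\hat{A}^{\ten\bt+1}$ with the $p$-th symmetric power of the conormal cosimplicial module (i.e.\ of $D(\Omega^1_{A/R}[-1])$), and conclude by Dold--Kan plus Eilenberg--Zilber that its normalisation is $\Omega^p_{A/R}[-p]$, with the Koszul sign rule converting symmetric into exterior powers. The paper's proof is just a terser version of this; your extra care about the signed $\Sigma_p$-action and about matching the explicit leading-term map with the shuffle component of Eilenberg--Zilber is a sound filling-in of what the paper leaves implicit.
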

\begin{proof}
 It suffices to show that the map is a quasi-isomorphism on the graded pieces associated to the filtration. Now, $\gr_F^p\hat{A}^{\ten \bt +1}= \Symm_A^p D(\Omega^1_A[-1])$, where $D$ denotes Dold--Kan denormalisation from cochain complexes to cosimplicial complexes. Thus
\[
 N\gr_F^p\hat{A}^{\ten \bt +1}= N\Symm_A^p D(\Omega^1_A[-1]),
\]
so $\Tot^{\Pi}N\Symm_A^p D(\Omega^1_A[-1])$ is quasi-isomorphic to $\Symm_A^p ND(\Omega^1_A[-1])= \Symm_A^p(\Omega_A^1[-1])= \Omega^p_A[-p]$, combining Dold--Kan with Eilenberg--Zilber.
\end{proof}

\begin{definition}\label{tildeFDRdef}
Define a decreasing filtration $\tilde{F}$ on $ \DR'(A/R)\llbracket\hbar\rrbracket$ by 
\[
 \tilde{F}^p\DR'(A/R):= \prod_{i\ge 0} F^{p-i}\DR'(A/R)\hbar^{i},
\]
where we adopt the convention that $F^j\DR'=\DR'$ for all $j\le 0$.

Define  further filtrations $G, G*\tilde{F}$ by $ G^k \DR'(A/R)\llbracket\hbar\rrbracket = \hbar^{k}\DR'(A/R)\llbracket\hbar\rrbracket$, and $(G*\tilde{F})^p:= \sum_{i+j=p}G^i\cap\tilde{F}^j$, so
\[
 (G*\tilde{F})^p= \prod_{i\ge 0} F^{p-2i}\DR'(A/R)\hbar^{i}.
\]
\end{definition}

This makes  $(\DR'(A/R)\llbracket\hbar\rrbracket,G*\tilde{F}) $ into a filtered DGAA, since $\tilde{F}^p\tilde{F}^q \subset \tilde{F}^{p+q}$ and similarly for $G$.

\begin{definition}
Define a generalised $(-1)$-shifted pre-symplectic structure on a cofibrant CDGA $A/R$ to be an element
\[
 \omega \in \z^1((G*\tilde{F})^2\DR'(A/R)\llbracket\hbar\rrbracket) = \z^1(F^2\DR'(A/R)) \by \hbar \z^1\DR'(A/R)\llbracket\hbar\rrbracket.
\]
Call this symplectic if $\Omega^1_{A/R}$ is perfect as an $A$-module and the  leading term $\omega_0 \in \z^1F^2\DR'(A/R)$ induces a quasi-isomorphism
\[
 [\omega_0]^{\sharp} \co \Hom_A(\Omega^1_{A/R},A) \to \Omega^1_{A/R}[-1],
\]
for $[\omega_0] \in \z^{-1}\Omega^2_{A/R}$ the image of $\omega_0$ modulo $F^3$.
\end{definition}

\begin{definition}\label{GPreSpdef}
 Define the space of generalised $(-1)$-shifted pre-symplectic structures on $A/R$ to be the simplicial set
\[
 G\PreSp(A/R,-1):= \Lim_i\mmc( (G*\tilde{F})^2\DR'(A/R)\llbracket\hbar\rrbracket/\tilde{F}^{i+2}), 
\]
where we regard the cochain complex  $\DR'(A/R)$ as a  DGLA with trivial bracket. Write $\PreSp = G\PreSp/G^1$.

Also write $G\PreSp(A/R,-1)/\hbar^{k}:= \Lim_i\mmc( ((G*\tilde{F})^2\DR'(A/R)[\hbar]/(G^k +\tilde{F}^{i+2)} )$, so $ G\PreSp(A/R,-1)= \Lim_k G\PreSp(A/R,-1)/\hbar^{k} $.

Set $G\Sp(A/R,-1) \subset G\PreSp(A/R,-1)$ to consist of the symplectic structures --- this is a union of path-components.
\end{definition}
Note that  $G\PreSp(A/R,-1)$  is canonically weakly  equivalent to the Dold--Kan denormalisation of the good truncation complex $\tau^{\le 0}((G*\tilde{F})^2\DR(A/R)\llbracket\hbar\rrbracket[1])$ (and similarly for the various quotients we consider),   but the description in terms of $\mmc$ will simplify comparisons. In particular, we have
\[
 \pi_iG\PreSp(A/R,-1)\cong \H^{1-i}(F^2\DR(A/R)) \by \hbar \H^{1-i}(\DR(A/R))\llbracket\hbar\rrbracket. 
\]

\subsubsection{Compatible quantisations}

We will now develop the notion of compatibility between a (truncated) generalised pre-symplectic structure and a (truncated) $E_0$  quantisation. For the $0$th order truncation (corresponding to $k=1$ in Definition \ref{Qcompdef}), this
 recovers the notion of compatibility between pre-symplectic and Poisson structures from \cite{poisson}.

\begin{lemma}\label{mulemma1}
 Take  a complete    filtered  graded-associative $R$-algebra $(B,\Fil^{\bt})$ and a  morphism $\phi \co A^{\#} \to \Fil^0B$ of graded $R$-algebras; assume that the left and right $A$-module structures on $\gr_{\Fil}B$ agree. Then for any  $\Delta \in \Fil^0B^1$,  there is an  
associated morphism 
\[
 \mu(-,\Delta) \co (\DR'(A)^{\#},F^{\bt}) \to (B,\Fil^{\bt})
\]
of filtered  graded-associative $R$-algebras
induced by the graded algebra map on $A^{\ten \bt +1}$ determined by  $\mu(1\ten 1, \Delta) = \Delta$ and $\mu(a, \Delta)=a$ for $ a \in A$.

Given $\rho \in \Fil^jB^k$, there is then a filtered $R$-linear derivation
\[
 \nu(-, \Delta, \rho) \co (\DR'(A/R)^{\#},F^{\bt}) \to (B[k], \Fil^{\bt+j})
\]
induced by the $\mu(-,\Delta)$-derivation on $A^{\ten \bt +1}$ determined by $\nu(1\ten 1, \Delta, \rho)= \rho$. 
\end{lemma}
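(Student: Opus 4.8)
The plan is to build the morphism $\mu(-,\Delta)$ freely on the \v{C}ech nerve and then check it descends to the completion $\hat A^{\ten\bt+1}$ and is compatible with the cup product and the filtrations. First I would work at cosimplicial level $n$: the algebra $A^{\ten n+1}$ is generated as an $R$-algebra by the $n+1$ copies of $A$ together with the images of the maps $A^{\ten n+1}\to A^{\ten n+1}$, but it is more economical to observe that $A^{\ten n+1}$ is generated over the first copy $A$ by the elements $1\ten\cdots\ten 1\ten a\ten 1\ten\cdots\ten 1$. So I would declare $\mu(a_0,\Delta)=\phi(a_0)$ for $a_0$ in the zeroth copy, and then use the face maps of the cosimplicial structure to reduce the value on a general tensor to a product of such generators and of the single "distinguished" generator $1\ten 1$, on which we set $\mu(1\ten 1,\Delta)=\Delta$; concretely $\mu(a_0\ten a_1\ten\cdots\ten a_n,\Delta)$ is defined by writing $a_0\ten\cdots\ten a_n$ as a product, in the \v{C}ech algebra, of terms $a_i$ pulled back from single factors and of copies of $1\ten 1$, and sending each factor via $\phi$ or to $\Delta$ respectively. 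The point is that $1\ten1$ generates the conormalisation $N^1$, and higher $N^n$ are cup products of copies of it, so the cup-product formula quoted in the excerpt dictates the value on $N^n\hat A^{\bt+1}$ uniquely; I would take this as the \emph{definition} of $\mu$ on the conormalised complex and then verify it is an algebra map.

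Next I would verify the three compatibilities. (i) Well-definedness on $\hat A^{\ten\bt+1}$: since $\Delta\in\Fil^0B$ and $I=\ker(A^{\ten\bt+1}\to A)$ maps into $\Fil^1B$ (because $1\ten 1 - $ (image of $A$) has $\mu$-value $\Delta - \phi(\text{stuff})$ which lies in $\Fil^1$, using that left and right $A$-module structures agree on $\gr_\Fil B$), the map sends $F^q=I^q$ into $\Fil^q B$, so it extends to the completion by continuity, $B$ being complete; this simultaneously gives the filtered statement $\mu(F^p)\subset\Fil^p$. (ii) Algebra map: one checks multiplicativity on generators of $A^{\ten n+1}$ and then that the Alexander--Whitney cup product on $N\hat A^{\bt+1}$ goes to the product on $B$ — here the explicit shuffle-free formula $(a_0\ten\cdots\ten a_m)\smile(b_0\ten\cdots\ten b_n)= a_0\ten\cdots\ten a_{m-1}\ten(a_mb_0)\ten b_1\ten\cdots\ten b_n$ matches exactly the way $\mu$ was defined by "concatenating" strings of $\Delta$'s and $\phi(a)$'s, so multiplicativity is essentially forced. (iii) Passing to $\Tot^\Pi$: the total complex is a product, the differential is the alternating sum of cofaces plus $\delta$, and one checks $\mu$ commutes with cofaces (immediate from the cosimplicial naturality of the construction) and with $\delta$ (because $\phi$ and the assignment $1\ten1\mapsto\Delta$ are all $\delta$-compatible once we remember $\Delta\in B^1$ contributes to the total differential — actually at the level of graded algebras as stated there is no differential to check, since the lemma asserts only a morphism of \emph{graded}-associative algebras, which simplifies matters).

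For the derivation statement I would argue identically, but relative to the bimodule $B[k]$ over $B$ (via $\mu(-,\Delta)$ on both sides): a $\mu(-,\Delta)$-derivation out of the free \v{C}ech algebra is determined by its values on generators, so setting $\nu(a,\Delta,\rho)=0$ for $a\in A$ and $\nu(1\ten 1,\Delta,\rho)=\rho$ determines $\nu$ uniquely on $A^{\ten\bt+1}$; the Leibniz rule then propagates it, and the cup-product formula again forces the value on $N^n\hat A^{\bt+1}$ to be a sum of terms in which exactly one $1\ten1$ is replaced by $\rho$ and the rest by $\Delta$. Since $\rho\in\Fil^j$, each such term lands in $\Fil^{\bt+j}$, giving the claimed filtration shift; completeness of $B$ lets us pass to $\hat A^{\ten\bt+1}$ and then to $\Tot^\Pi$ exactly as before.

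The main obstacle, and the only place real care is needed, is showing the construction is \emph{well-defined} — i.e.\ that writing an element of $A^{\ten n+1}$ as a product of the chosen generators and then applying $\mu$ gives an answer independent of the chosen factorisation, despite $B$ being noncommutative. The hypothesis that the left and right $A$-actions on $\gr_\Fil B$ coincide is precisely what makes this work: any two factorisations differ by commuting an $A$-factor past a $\Delta$-factor, which changes the value by a commutator $[\phi(a),\Delta]$ or $[\phi(a),\phi(a')]$; the latter vanishes because $\phi$ is a graded-algebra map from the \emph{commutative} $A^\#$, and the former lies in $\Fil^1$ and is handled inductively by working modulo $\Fil^{q+1}$, where the agreement of the two module structures on $\gr_\Fil B$ kills the ambiguity. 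So the proof is really an induction on the filtration degree $q$, building $\mu$ on $A^{\ten\bt+1}/F^{q+1}$ compatibly and passing to the limit; I expect this inductive bookkeeping to be the bulk of the (routine) work, with everything else being formal naturality of Dold--Kan, Alexander--Whitney, and $\Tot^\Pi$.
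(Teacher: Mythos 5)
Your construction of $\mu$ and $\nu$ is the paper's: the Alexander--Whitney formula forces $\mu(a_0\ten\cdots\ten a_r,\Delta)=a_0\Delta a_1\Delta\cdots\Delta a_r$, with $\nu$ the corresponding sum in which one $\Delta$ is replaced by $\rho$, and multiplicativity for the cup product is immediate. Note, though, that there is no well-definedness issue at the uncompleted level: that formula is a manifestly well-defined $R$-multilinear map on $A^{\ten r+1}$, so the ``main obstacle'' you identify in your last paragraph (independence of factorisation) is not where the work lies. The entire content of the lemma is the filtration estimate $\mu(F^p)\subset\Fil^p$, which is what allows passage to the completion $\hat{A}^{\ten\bt+1}$.

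That estimate is where your argument has a genuine gap. In step (i) you deduce $\mu(F^q)=\mu(I^q)\subset\Fil^qB$ from $\mu(I)\subset\Fil^1B$, but $I^q$ is a power with respect to the \emph{internal} (componentwise) product of $A^{\ten m+1}$, whereas $\mu$ is multiplicative only for the cup product: $\mu((a_0\ten a_1)\cdot(b_0\ten b_1),\Delta)=a_0b_0\Delta a_1b_1$, which is not $\mu(a_0\ten a_1,\Delta)\,\mu(b_0\ten b_1,\Delta)$ in the noncommutative $B$. So the inference does not follow as written. The paper bridges exactly this point in two steps: first, $F^pA^{\ten m+1}=\sum_{p_1+\cdots+p_m=p}(F^{p_1}A^{\ten2})\smile\cdots\smile(F^{p_m}A^{\ten2})$ reduces everything to $A^{\ten 2}$; second, the identity $[a,x]=(a\ten1\mp1\ten a)\cdot x$ for $a\in A$, $x\in A^{\ten2}$ (cup-commutator on the left, internal product on the right) gives $[A,J]=I\cdot J$ for internal ideals $J$, hence $F^pA^{\ten2}=I^{\cdot p}$ is the $p$-fold iterated cup-commutator $[A,[A,\ldots[A,A^{\ten2}]\ldots]$. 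Since $\mu(-,\Delta)$ is a cup-algebra map and the hypothesis on $\gr_{\Fil}B$ says precisely that $[\phi(A),\Fil^qB]\subset\Fil^{q+1}B$, this yields $\mu(F^p)\subset\Fil^p$ and $\nu(F^p)\subset\Fil^{p+j}$. Your final paragraph contains the right mechanism --- commutators with elements of $A$ raise the filtration --- but you never relate the internal powers $I^{\cdot p}$ to iterated cup-commutators, and that identification is the one non-formal step of the proof.
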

\begin{proof}
 Explicitly, $\mu(-,\Delta)$ is given on $A^{\ten \bt +1}$ by
 \[
 (a_0\ten a_1 \ten \ldots \ten a_r) \mapsto a_0\Delta a_1\Delta \ldots \Delta a_r,
\]
because $a\ten b= a\smile (1\ten 1) \smile b$ and so on. Similarly, $\nu(-, \Delta, \rho)$  is given by
\[
 (a_0\ten a_1 \ten \ldots \ten a_r) \mapsto \sum_{i=0}^{r-1} \pm a_0\Delta a_1\Delta \ldots  \Delta a_i \rho a_{i+1}\Delta \ldots \Delta a_r.
\]
We need to show that these respect the filtrations, so giving maps on  $\hat{A}^{\ten \bt +1}$ and hence filtered morphisms on $\DR(A/R)'$. 

Observe that the filtration on $  A^{\ten \bt +1}$ is generated by that on $A^{\ten 2}$, in the sense that
\[
 F^pA^{\ten m+1} = \sum_{p_1+ \ldots + p_m=p} (F^{p_1}A^{\ten 2}) \smile \ldots \smile (F^{p_m}A^{\ten 2}).
\]
It therefore suffices to show that $\mu(-,\Delta) \co A^{\ten 2} \to B$  and $\nu(-,\Delta,\rho)$ are appropriately filtered. 

Writing $[x,y]:= x \smile y - (-1)^{\deg x\deg y}y \smile x$ and $\cdot$ for the internal multiplication  on $A^{\ten r}$, it follows that   for $a \in A$ and $x \in A \ten A$ we have  $[a,x]= (a \ten 1 \mp 1 \ten a)\cdot x$. Since $I= \ker(A^{\ten 2} \to A)$ is generated by elements of the form $ (a \ten 1 \mp 1 \ten a)$, this means that $[A,J]= I \cdot J$ for all ideals $J$ in $A^{\ten 2}$. Because $F^p= I^{\cdot p}$, this gives
\[
 F^pA^{\ten 2} = \underbrace{[A,[A,\ldots [A,}_p A^{\ten 2}]\ldots ],
\]
so $F$ is the smallest multiplicative filtration for which left and right $A$-modules structures on $\gr_F  A^{\ten \bt +1}$ agree. 

Therefore the algebra homomorphism $\mu(-,\Delta)$ must send $F^p$ to $\Fil^p$, and the derivation $\nu(-, \Delta, \rho)$ must send $F^p$ to $\Fil^{p+j}$; in particular, the maps descend to the completion $\DR'(A/R)$.
\end{proof}

\begin{lemma}\label{QPolmudef}
 Given  $\Delta \in ((G*\tilde{F})^2Q\widehat{\Pol}(M,-1)/G^k)^{1}$, Lemma \ref{mulemma1}  gives a   morphism 
\[
 \mu(-,\Delta) \co \DR'(A/R)[\hbar]/G^k \to T_{\Delta}Q\widehat{\Pol}(M,-1)/G^k
\]
of graded associative $R[\hbar]/\hbar^{k}$-algebras, respecting the filtrations $(G*\tilde{F})$.

Given $\rho \in ((G*\tilde{F})^pT_{\Delta}Q\widehat{\Pol}(M,-1)/G^k)^r$, there is then a $R[\hbar]/\hbar^{k}$-linear derivation
\[
 \nu(-, \Delta, \rho) \co (\DR'(A/R)[\hbar]/\hbar^{k},(G*\tilde{F})^{\bt})  \to (T_{\Delta}Q\widehat{\Pol}(M,-1)[r]/G^k,(G*\tilde{F})^{\bt+p}).
\]
\end{lemma}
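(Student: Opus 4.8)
The plan is to deduce Lemma \ref{QPolmudef} from Lemma \ref{mulemma1} by exhibiting the correct data $(B, \Fil^\bt, \phi)$ and verifying the hypotheses. First I would take $B$ to be the graded-associative $R[\hbar]/\hbar^k$-algebra underlying $T_{\Delta}Q\widehat{\Pol}(M,-1)/G^k$ (recall from Definition \ref{TQpoldef} that multiplication of differential operators makes this a DGAA, and reduction mod $G^k$ preserves this since $G^k$ is a two-sided ideal), equipped with the filtration $\Fil^\bt := (G*\tilde{F})^\bt$. The map $\phi \co A^\# \to \Fil^0 B$ is the obvious one sending $a \in A$ to the order-zero operator "multiplication by $a$", which indeed lands in $F_0 \sD_A(M) \hbar^{-1} \cdot \hbar = F_0\sD_A(M)\hbar^0 \subset (G*\tilde F)^0$; I need completeness of $(B,\Fil^\bt)$, which holds because $T_{\Delta}Q\widehat{\Pol}(M,-1)/G^k = \Lim_i (\cdots/\tilde F^{i+2})$ by construction (and $G^k$ truncates the $\hbar$-direction). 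The key structural hypothesis of Lemma \ref{mulemma1} — that the left and right $A$-module structures on $\gr_{\Fil} B$ agree — is exactly the statement that an order-$j$ operator commutes with multiplication by $A$ modulo order-$(j-1)$ operators; this is immediate from the definition of the order filtration $F_\bt$ on $\sD_A(M)$, and it passes to the convolution filtration $G*\tilde F$ and its quotient by $G^k$.

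Next, I would apply Lemma \ref{mulemma1} with this data and with the Maurer--Cartan-type element $\Delta \in \Fil^0 B^1 = ((G*\tilde F)^0 T_{\Delta}Q\widehat{\Pol})^1$ — note $((G*\tilde F)^2 Q\widehat{\Pol}(A,n)/G^k)^1 \subset \Fil^0 B^1$ via the inclusion of $Q\widehat{\Pol}$ into its own centre $T_{\Delta}Q\widehat{\Pol}$, so the hypothesis "$\Delta \in \Fil^0 B^1$" is satisfied. This produces the filtered graded-algebra morphism $\mu(-,\Delta) \co (\DR'(A/R)^\#, F^\bt) \to (B, \Fil^\bt)$. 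To obtain the statement as written, with source $\DR'(A/R)[\hbar]/G^k$ filtered by $(G*\tilde F)^\bt$ rather than $\DR'(A/R)^\#$ filtered by $F^\bt$, I extend $R$-linearly to $R[\hbar]/\hbar^k$ and observe that by Definition \ref{tildeFDRdef} one has $(G*\tilde F)^p \DR'(A/R)[\hbar] = \prod_i F^{p-2i}\DR'(A/R)\hbar^i$, so a filtered map out of $(\DR'(A/R)^\#, F^\bt)$ that is $R[\hbar]$-linear and sends $\hbar \mapsto \hbar$ automatically respects $(G*\tilde F)$ on both sides — because on the target $\hbar \in G^1 \subset (G*\tilde F)^1$ and, crucially, $\hbar$ is \emph{not} in $\tilde F^2$ but a factor of $\hbar$ does shift $(G*\tilde F)$-degree by $1$. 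The derivation statement for $\nu(-,\Delta,\rho)$ is handled identically: $\rho \in ((G*\tilde F)^p \widehat{\Pol}(A,n)/G^k)^r$ is viewed inside $\Fil^p B^r = ((G*\tilde F)^p T_{\Delta}Q\widehat{\Pol})^r$, and Lemma \ref{mulemma1} yields the $\mu(-,\Delta)$-derivation $\nu$ shifting filtration by $p$ and cohomological degree by $r$, which after the same $\hbar$-bookkeeping is the asserted map.

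The one point requiring genuine care — and the step I expect to be the main obstacle — is the matching of filtrations under base change from $F$ on $\DR'$ to $G*\tilde F$ on $\DR'[\hbar]$, together with the interaction with the quotient by $G^k = \hbar^k$. Lemma \ref{mulemma1} is stated for the single filtration $F$ on the \v Cech-type resolution, and I must be sure that tensoring with $R[\hbar]/\hbar^k$ and forming the convolution $G*\tilde F$ does not introduce elements of low $(G*\tilde F)$-degree whose images are not controlled: concretely, an element $\omega_i \hbar^i$ with $\omega_i \in F^{p-2i}\DR'$ maps to $\mu(\omega_i,\Delta)\hbar^i \in \Fil^{p-2i} B \cdot \hbar^i$, and I need $\Fil^{p-2i} B \cdot \hbar^i \subset \Fil^p B$ where $\hbar \in \Fil^1 = G^1$ contributes (not $\tilde F$). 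Since $\Fil = G*\tilde F$ and $G^i \cap \tilde F^j$ with $j = p-2i$ sits in $(G*\tilde F)^{i + (p-2i)} = (G*\tilde F)^{p-i}$, one actually lands in $(G*\tilde F)^{p-i}$, not $(G*\tilde F)^p$ — so the honest claim to verify is that the two parametrisations of the filtration (shift by $F$-degree on $\DR'$ with a separate $\hbar$-adic grading, versus the single convolution filtration) are \emph{compatibly indexed}, which is precisely the content of the displayed formula for $(G*\tilde F)^p\DR'[\hbar]$ in Definition \ref{tildeFDRdef}. Once that bookkeeping identity is in hand, the map $\mu(-,\Delta)$ from Lemma \ref{mulemma1} is $(G*\tilde F)$-filtered essentially by definition, and compatibility with reduction mod $G^k$ is automatic because $G^k$ is sent to $G^k$ on the nose. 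I would conclude by remarking that multiplicativity and the Leibniz rule for $\nu$ are inherited verbatim from Lemma \ref{mulemma1}, so no further computation is needed.
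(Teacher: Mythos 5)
There is a genuine gap, and it sits exactly where you chose the target algebra $B$. You take $B = T_{\Delta}Q\widehat{\Pol}(M,-1)/G^k$ and justify the hypothesis ``$\Delta \in \Fil^0B^1$'' of Lemma \ref{mulemma1} ``via the inclusion of $Q\widehat{\Pol}$ into its own centre $T_{\Delta}Q\widehat{\Pol}$''. No such inclusion exists: by Definition \ref{TQpoldef} we have $T_{\Delta}Q\widehat{\Pol}(M,-1)=\prod_{p\ge 0}F_p\sD_{A/R}(M)\hbar^{p}$ while $Q\widehat{\Pol}(M,-1)=\prod_{p\ge 0}F_p\sD_{A/R}(M)\hbar^{p-1}$, so $T_{\Delta}Q\widehat{\Pol}=\hbar\, Q\widehat{\Pol}$ and the containment goes the \emph{other} way. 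A quantisation $\Delta=\sum_{j\ge 2}\Delta_j\hbar^{j-1}$ with $\Delta_j$ of order $j$ is not an element of $\prod_p F_p\sD(M)\hbar^p$, so your $B$ does not contain $\Delta$ and Lemma \ref{mulemma1} cannot be applied to it. This is not a bookkeeping nuisance but the central difficulty: the expressions $a_0\Delta a_1\Delta\cdots\Delta a_r$ must be formed in an associative algebra containing both $A$ and $\Delta$, and neither $Q\widehat{\Pol}$ (not closed under multiplication --- products land in $\hbar^{-1}Q\widehat{\Pol}$) nor $T_{\Delta}Q\widehat{\Pol}$ (does not contain $\Delta$) will do. The paper's proof resolves this by introducing the localisation $T=T_0Q\widehat{\Pol}(M,-1)[\hbar^{-1}]$, equipped with the convolution of the $\hbar$-adic filtration and $G^iT:=\hbar^iT_0Q\widehat{\Pol}(M,-1)$; then $Q\widehat{\Pol}=G^{-1}T$, one has $(G*\tilde{F})^pQ\widehat{\Pol}\subset(G*\tilde{F})^{p-2}T$ (note the shift by $2$, which is what makes $\Delta$ land in $\Fil^0T$), Lemma \ref{mulemma1} applies to $(T,(G*\tilde F))$, and only \emph{a posteriori} does one check that the image of $\mu(-,\Delta)$ lies in the subalgebra $T_0Q\widehat{\Pol}=G^0T$, using $[\Delta,a]\in G^0T$ for $a\in A$. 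None of this appears in your proposal, and the discrepancy you yourself flag in the last paragraph (landing in $(G*\tilde F)^{p-i}$ rather than $(G*\tilde F)^p$) is a symptom of the missing index shift; declaring it to be ``precisely the content of the displayed formula in Definition \ref{tildeFDRdef}'' does not resolve it.

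A secondary point: you assert that agreement of the left and right $A$-module structures on $\gr_{\Fil}B$ is ``immediate from the definition of the order filtration'' and ``passes to the convolution filtration''. It is in fact \emph{false} for the filtration $\tilde F$ alone (on $\gr_{\tilde F}^i=F_i\sD(M)\hbar^i$ the commutator $[a,u]\hbar^i$ has lower order but the same power of $\hbar$, hence is nonzero in the associated graded); the paper points this out explicitly as the reason the convolution filtration is needed. The statement does hold for $G*\tilde F$, but only because $[(G*\tilde F)^i,(G*\tilde F)^j]\subset(G*\tilde F)^{i+j-1}$, which is the computation you would need to supply. To repair the proof you should follow the paper's route: reduce to the limit over $k$, build the auxiliary filtered algebra $T$, apply Lemma \ref{mulemma1} there, and then carry out the two-filtration bookkeeping to descend to $T_{\Delta}Q\widehat{\Pol}(M,-1)$ with its filtration $(G*\tilde F)$.
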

\begin{proof}
It suffices to prove this for the limit over all $k$, as $\Delta$ and $\rho$ always lift to $(G*\tilde{F})^2Q\widehat{\Pol}(M,-1)$.
Set $T= T_0Q\widehat{\Pol}(M,-1)[\hbar^{-1}]$, with filtrations $\tilde{F}$ given by powers of $\hbar$  and $G^iT:= \hbar^i T_0Q\widehat{\Pol}(M,-1)$, so $G^i\tilde{F}^jT= \prod_{p \ge j} \hbar^pF_{p-i}$.
These filtrations are multiplicative, with $[G^i,G^j] \subset G^{i+j-1}$, so the convolution filtration also satisfies $[(G*\tilde{F})^i,(G*\tilde{F})^j] \subset (G*\tilde{F})^{i+j-1} $; in particular $\gr_{G*\tilde{F}}T$ is commutative, so its left and right $A$-module structures agree (the same is not true of $\gr_{\tilde{F}}T$, which makes the convolution filtration necessary). Explicitly, $(G*\tilde{F})^pT= \prod_k \hbar^k F_{2k-p}$.

Then $Q\widehat{\Pol}(M,-1)= G^{-1}T $ and $T_0Q\widehat{\Pol}(M,-1)= G^0T$, with      $ G^i\tilde{F}^jT=G^i\tilde{F}^jT_0Q\widehat{\Pol}(M,-1) $ and $G^{i-1}\tilde{F}^{j-1}T=G^i\tilde{F}^jQ\widehat{\Pol}(M,-1)  $ whenever $i \ge 0$. Thus $(G*\tilde{F})^p Q\widehat{\Pol}\subset  (G*\tilde{F})^{p-2}T$, so in particular $\Delta$ lies in $(G*\tilde{F})^0T$.
 Lemma \ref{mulemma1} now gives  filtered morphisms
\begin{align*}
 \mu(-,\Delta) \co (\DR'(A/R),F^{\bt}) &\to (T, (G*\tilde{F})^{\bt})\\
 \nu(-, \Delta, \rho) \co(\DR'(A/R),F^{\bt}) &\to (T[r], (G*\tilde{F})^{\bt+p}).
\end{align*}

Since $\Delta \in G^{-1}T$, we have $[\Delta, a] \in G^0T$ for all $a \in A$, so $\mu(F^1(A\ten A), \Delta)\subset G^0T$. Since $F^1(A\ten A)$ topologically generates  $\DR'(A/R)$ under multiplication, $\mu(\DR'(A/R), \Delta)$ is thus contained in  the subalgebra $G^0T= T_0Q\widehat{\Pol}(M,-1)$ of $T$. 
Extending linearly gives a map from $\DR'(A/R)\llbracket \hbar \rrbracket$;
since $\hbar G^i\tilde{F}^qT\subset G^{i+1}\tilde{F}^{q+1}T$, we then see that  $\mu(\hbar^i F^{p-2i} , \Delta) \subset G^{p-r-i}\tilde{F}^{r+i}$, so
\[
 \mu((G*\tilde{F})^p\DR(A/R)'\llbracket\hbar\rrbracket, \Delta) \subset (G*\tilde{F})^pT_{\Delta}Q\widehat{\Pol}(M,-1),
\]
and similarly for $\nu$.
\end{proof}

\begin{lemma}\label{keylemma}
Take a complete    filtered   $R$-DGAA $(B,\Fil^{\bt})$ and a  morphism $\phi \co A \to \Fil^0B$ of  $R$-DGAAs; assume that the left and right $A$-module structures on $\gr_{\Fil}B$ agree. Then for any  $\Delta \in\Fil^0B^1$,  and  any $\omega \in \DR'(A/R)$, we have
\begin{align*}
[\Delta,\mu(\omega, \Delta)] = \mu(d\omega, \Delta) + \half \nu(\omega, \Delta, [\Delta,\Delta]),\\
\delta_{\Delta}\mu(\omega, \Delta) = \mu(D\omega, \Delta) + \nu(\omega, \Delta,\kappa(\Delta)),
\end{align*}
where $\delta$ are the structural differentials on $A,B$, with   $\delta_{\Delta}:= \delta + [\Delta,-]$,  $D:= d \pm \delta$  the total differential on $\DR'(A/R)$, and $\kappa(\Delta):= [\delta, \Delta] + \half[\Delta,\Delta]$.
\end{lemma}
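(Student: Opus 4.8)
The plan is to verify both identities by reducing them to a computation on a set of algebra generators of the de Rham resolution, exploiting that $\mu(-,\Delta)$ is an algebra map and $\nu(-,\Delta,\rho)$ a $\mu(-,\Delta)$-derivation.

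First I would arrange things so that Lemma \ref{mulemma1} applies on the nose. Although $\Delta$ is given as a derivation of $B$, I will regard it as an element of the complete filtered graded-associative $R$-algebra $\sB$ of $R$-linear operators on $B$ generated by the multiplications by elements of $\phi(A)$ together with $\Delta$. The hypothesis $\Delta(\phi(A))\subset\Fil^{1}B$ is exactly what is needed to equip $\sB$ with a filtration for which the left and right $A$-module structures on $\gr_{\Fil}\sB$ agree; Lemma \ref{mulemma1}, applied to the evident map $A^{\#}\to\Fil^{0}\sB$ and to $\Delta\in\Fil^{0}\sB^{1}$, then produces the graded algebra map $\mu(-,\Delta)$ and, for each $\rho$, the $\mu(-,\Delta)$-derivation $\nu(-,\Delta,\rho)$. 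Since the structural differential of $\sB$ is $[\delta,-]$, one has $\delta_{\Delta}=[\delta+\Delta,-]$ and $\delta_{\Delta}^{2}=\ad(\kappa(\Delta))$, because $(\delta+\Delta)^{2}=[\delta,\Delta]+\half[\Delta,\Delta]=\kappa(\Delta)$.

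Next I would observe that, in each of the two identities, all three terms are graded $\mu(-,\Delta)$-derivations $\DR'(A/R)\to\sB$ of cohomological degree $+1$: the left-hand sides because $[\Delta,-]$ (resp.\ $\delta_{\Delta}$) is a graded derivation of $\sB$ and $\mu(-,\Delta)$ is an algebra map; the terms $\mu(d\omega,\Delta)$ and $\mu(D\omega,\Delta)$ because $d$ and $D$ are graded derivations of the bigraded DGAA $\DR'(A/R)$; and the $\nu$-terms directly by Lemma \ref{mulemma1}. A quick check of Koszul signs shows the three derivations in each identity all carry the sign of a degree-$1$ operator, so the difference of the two sides is again a $\mu(-,\Delta)$-derivation and vanishes as soon as it vanishes on algebra generators. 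I would then use that the uncompleted cosimplicial algebra $A^{\otimes\bullet+1}$ is generated under the cup product by $A$ in degree $0$ together with the single degree-$1$ element $1\otimes 1$ --- on which $\mu(-,\Delta)$ takes the value $\Delta$ --- and that the two identities, which already make sense at this uncompleted level, descend to $\DR'(A/R)$ upon conormalising and completing; it therefore suffices to check them on $a\in A$ and on $1\otimes 1$. On $a\in A$ the $\nu$-terms are empty sums, $\mu(a,\Delta)=\phi(a)$, and both identities reduce to $\phi$ being a morphism of DGAAs together with the defining relation $\mu(da,\Delta)=[\Delta,\phi(a)]$, where $da=1\otimes a-a\otimes 1$. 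On $1\otimes 1$ one has $\mu(1\otimes1,\Delta)=\Delta$, $d(1\otimes1)=1\otimes1\otimes1$ so that $\mu(d(1\otimes1),\Delta)=\Delta^{2}$, $\delta(1\otimes1)=0$, and $\nu(1\otimes1,\Delta,\rho)=\rho$; the first identity then becomes $\half[\Delta,\Delta]=\Delta^{2}$ and the second becomes $\delta_{\Delta}\Delta=\Delta^{2}+\kappa(\Delta)$, both immediate from the definitions. (If one prefers the generators $da$, the first identity reduces instead to the graded Jacobi identity $[\Delta,[\Delta,\phi(a)]]=\half[[\Delta,\Delta],\phi(a)]$, and the second follows from this together with $\delta_{\Delta}\phi(a)=\phi(\delta a)+[\Delta,\phi(a)]$.)

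The main obstacle will be bookkeeping: choosing the filtration on the auxiliary algebra $\sB$ so that Lemma \ref{mulemma1} applies verbatim and $\delta_{\Delta}$, $\kappa(\Delta)$ emerge as stated, and --- more persistently, throughout --- reconciling the Koszul sign conventions of the Dold--Kan total complex defining $\DR'(A/R)$ (including its external differential $d$) with those of the graded commutator on $\sB$. Once those signs are pinned down, each of the two generator computations above is a one-line check.
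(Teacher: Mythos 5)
Your proof is correct and follows essentially the same route as the paper's: both sides of each identity are $\mu(-,\Delta)$-derivations, so it suffices to check them on the generators $a$ and $1\ten 1$ (equivalently $df$), where they reduce to $\half[\Delta,\Delta]=\Delta^2$ and the definition of $\kappa(\Delta)$. Your extra care in realising $\Delta$ inside an auxiliary filtered operator algebra and in working at the unnormalised, uncompleted level before descending only makes explicit steps the paper leaves implicit.
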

\begin{proof}
Both $[\Delta,\mu(-, \Delta)]$ and $\delta_{\Delta}\mu(-, \Delta)$ are derivations with respect to $\mu(-, \Delta)$, so it suffices to verify these identities on the generators $a, df$   of the denormalisation of $\DR'(A/R)$, for $a,f \in A$. 

In these cases, we have
\begin{align*}
[\Delta, \mu(a,\Delta)] = [\Delta, a] = \mu(1\ten a \mp a\ten 1, \Delta)&= \mu(da, \Delta),\\
 [\Delta,\mu(1\ten f -f\ten 1, \Delta)]= \Delta^2f\mp f\Delta^2&= \half \nu(df, \Delta, [\Delta,\Delta]).
\end{align*}
Because $\nu(a, \Delta, [\Delta,\Delta])=0 $ ($\nu$ being $A$-linear) 
 and $ddf=0$, this gives the required results, the second set of equalities following by adding $\delta$.
\end{proof}

In particular, Lemma \ref{keylemma} implies that when $\Delta\in \mc((G*\tilde{F})^2Q\widehat{\Pol}(A,n)/G^k)$ is an $E_0$ quantisation, $\mu(-,\Delta)$ is a chain map (since $T_{\Delta}Q\widehat{\Pol}(M,-1) = (T_0Q\widehat{\Pol}(M,-1), \delta_{\Delta})$), so $\mu(-,\Delta)$ defines a map from de Rham cohomology to quantised Poisson cohomology.

\begin{definition}
We say that a generalised  $(-1)$-shifted  pre-symplectic structure $\omega$ and an $E_0$ quantisation $\Delta$ of a strict line bundle $M$  are  compatible (or a compatible pair) if 
\[
 [\mu(\omega, \Delta)] = [-\pd_{\hbar^{-1}}(\Delta)] \in  \H^1((G*\tilde{F})^2T_{\Delta}Q\widehat{\Pol}(M,-1)),
\]
where $\sigma=-\pd_{\hbar^{-1}}$ is the canonical tangent vector of Definition \ref{Qsigmadef}. 
\end{definition}

\begin{definition}\label{vanishingdef}
Given a simplicial set $Z$, an abelian group object $A$ in simplicial sets over $Z$,  a space $X$ over $Z$ and a morphism  $s \co X \to A$ over $Z$, define the homotopy vanishing locus of $s$ over $Z$ to be the homotopy limit of the diagram
\[
\xymatrix@1{ X \ar@<0.5ex>[r]^-{s}  \ar@<-0.5ex>[r]_-{0} & A \ar[r] & Z}.
\]
\end{definition}

\begin{definition}\label{Qcompdef}
Define the space $Q\Comp(M,-1)$ of compatible quantised $(-1)$-shifted pairs to be the homotopy vanishing locus of  
\[
 (\mu - \sigma) \co G\PreSp(A/R,-1) \by Q\cP(M,-1) \to TQ^{tw}\cP(M,-1)
\]
over $Q^{tw}\cP(M,-1)$

We define a cofiltration on this space by setting $ Q\Comp(M,-1)/G^k$ to be the homotopy vanishing locus of  
\[
 (\mu - \sigma) \co (G\PreSp(A/R,-1)/G^k)  \by (Q\cP(M,-1)/G^k)  \to TQ^{tw}\cP(M,-1)/G^k 
\]
over $Q^{tw}\cP(M,-1)/G^k $.

\end{definition}

When $k=1$, note that this recovers the notion of compatible $(-1)$-shifted pairs from \cite{poisson}.

\begin{definition}
 Define $Q\Comp(M,-1)^{\nondeg} \subset Q\Comp(M,-1)$ to consist of compatible quantised pairs $(\omega, \Delta)$ with $\Delta$ non-degenerate. This is a union of path-components, and by \cite[Lemma \ref{poisson-compatnondeg}]{poisson} has a natural map 
\[
 Q\Comp(M,-1)^{\nondeg}\to G\Sp(A/R,-1)
\]
as well as the canonical map
\[
 Q\Comp(M,-1)^{\nondeg} \to Q\cP(M,-1)^{\nondeg}.
\]
\end{definition}

\begin{proposition}\label{QcompatP1} 
For any strict line bundle $M$, the canonical map
\begin{eqnarray*}
    Q\Comp(M,-1)^{\nondeg} \to  Q\cP(M,-1)^{\nondeg}           
\end{eqnarray*}
 is a weak equivalence. In particular, there is a morphism
\[
  Q\cP(M,-1)^{\nondeg} \to G\Sp(A/R,-1)
\]
in the homotopy category of simplicial sets.
\end{proposition}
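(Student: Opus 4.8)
The plan is to prove the equivalence $Q\Comp(M,-1)^{\nondeg}\to Q\cP(M,-1)^{\nondeg}$ by an obstruction-theoretic argument, filtering by the cofiltration $G^k$ and working one order at a time. The statement for $k=1$ is the corresponding result of \cite{poisson} on compatible $(-1)$-shifted pairs (for the pre-symplectic/Poisson case), so the task reduces to showing that the map
\[
 Q\Comp(M,-1)^{\nondeg}/G^{k+1} \to (Q\Comp(M,-1)^{\nondeg}/G^{k}) \by_{Q\cP(M,-1)^{\nondeg}/G^{k}} (Q\cP(M,-1)^{\nondeg}/G^{k+1})
\]
is a weak equivalence for all $k$, and then passing to the limit (the relevant towers are towers of fibrations, by construction of the spaces as homotopy vanishing loci of maps between $\mmc$-spaces, so $\Lim$ computes the homotopy limit). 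Since $Q\Comp$ is defined as the homotopy vanishing locus of $\mu-\sigma$ over $Q^{tw}\cP$, the fibre of this comparison map over a fixed quantisation $\Delta$ of order $k$, with a chosen compatible $\omega$ of order $k$, is the homotopy fibre of
\[
 (\mu(-,\Delta)-\sigma) \co G\PreSp(A/R,-1)_{[\hbar^k]} \to TQ^{tw}\cP(M,-1)_{[\hbar^k]}
\]
over the base point, where the subscript denotes the $\hbar^k$-graded piece relative to the order-$k$ truncation. So the real content is: \emph{on the $\gr_G^k$ level, $\mu(-,\Delta)-\sigma$ induces a weak equivalence from generalised pre-symplectic structures to quantised Poisson cohomology, when $\Delta$ is non-degenerate.}

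\textbf{The key computation.} I would identify these graded pieces explicitly. By Definition~\ref{GPreSpdef} and the formula $\pi_iG\PreSp \cong \H^{1-i}(F^2\DR)\by\hbar\H^{1-i}(\DR)\llbracket\hbar\rrbracket$, the $\gr_G^k$-piece of the pre-symplectic space (for $k\ge 2$, $k$ even, say, or more carefully the $\hbar^k$-coefficient in the convolution filtration $(G*\tilde F)^2$) is governed by $\H^*(\DR(A/R))$, i.e.\ de Rham cohomology. On the quantisation side, using the isomorphisms recorded after Definition~\ref{TQPdef},
\[
 \gr_G^i\tilde{F}^pT_{\Delta}Q\widehat{\Pol}(M,-1) \xrightarrow{\ \sim\ } F^{p-i}T_{\pi_{\Delta}}\widehat{\Pol}(A,-1)\hbar^{i},
\]
so the associated graded of quantised Poisson cohomology is ordinary ($\pi_\Delta$-twisted) Poisson cohomology. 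On this associated graded, $\mu(-,\Delta)$ reduces to the classical map $\mu(-,\pi_\Delta)$ from de Rham cohomology to twisted polyvectors from \cite{poisson}, and $\sigma=-\pd_{\hbar^{-1}}$ reduces to the classical $\sigma$ of Definition~\ref{sigmadef}. Thus the claim on each graded piece is \emph{exactly} the classical statement from \cite{poisson} that, for a non-degenerate $\pi$, the map comparing $\mu(-,\pi)$ with $\sigma$ gives a weak equivalence between pre-symplectic structures and $\mc(F^2T_\pi\widehat\Pol)$ — which is precisely \cite[Lemma~\ref{poisson-compatnondeg}]{poisson}, already invoked in the excerpt. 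Non-degeneracy is used here and only here: it guarantees $\omega_2^\sharp$ (equivalently $\Delta_2^\sharp$) is a quasi-isomorphism and $\Omega^1_A$ perfect, which is what makes the comparison of complexes a quasi-isomorphism in the classical setting.

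\textbf{Assembling and the main obstacle.} Granting the graded statement, a standard induction up the $G$-tower (using that the spaces are built as homotopy limits, so a map of towers that is a levelwise equivalence on associated gradeds, and an equivalence at level $k=1$, is an equivalence) yields the weak equivalence $Q\Comp(M,-1)^{\nondeg}\simeq Q\cP(M,-1)^{\nondeg}$. Composing with the map $Q\Comp(M,-1)^{\nondeg}\to G\Sp(A/R,-1)$ recorded just before the proposition then produces the desired morphism $Q\cP(M,-1)^{\nondeg}\to G\Sp(A/R,-1)$ in the homotopy category (one inverts the first equivalence). The main obstacle is the bookkeeping identifying $\gr_G$ of the \emph{noncommutative} gadget $T_\Delta Q\widehat\Pol$ with the commutative twisted polyvectors, and checking that $\mu(-,\Delta)$ — which had to be defined on the associative DGAA resolution $\DR'$ precisely because $\sD_A$ is noncommutative — degenerates on $\gr_G$ to the honest CDGA morphism $\mu(-,\pi_\Delta)$ of \cite{poisson}; this requires care with the convolution filtration $G*\tilde F$ (needed, as Lemma~\ref{QPolmudef}'s proof notes, because the left/right $A$-module structures agree only on $\gr_{G*\tilde F}$, not on $\gr_{\tilde F}$), and with the compatibility of the resolution $\DR'\to\DR$ with all these filtrations. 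Once that reduction is in place, everything else is a formal consequence of the already-proven affine Poisson case.
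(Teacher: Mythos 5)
Your proposal is correct, and it rests on the same essential input as the paper's proof — namely that non-degeneracy of $\Delta$ forces $\mu(-,\Delta)$ to be a quasi-isomorphism on the associated gradeds of the $\hbar$-adic filtration, where it degenerates (via the commutativity of $\gr_{G*\tilde F}$) to the classical map $\mu(-,\pi_\Delta)$ of \cite{poisson} — but you package it differently. You run an induction up the $G$-tower, producing an obstruction space at each stage and showing it is contractible by the graded quasi-isomorphism; this is exactly the strategy the paper adopts later for the stacky versions (Proposition \ref{QcompatP2} via Lemmas \ref{compSpob} and \ref{CompSpObX}), so it is certainly viable. The paper's affine argument is more direct: it fixes $\Delta$, observes that the homotopy fibre of $Q\Comp(M,-1)^{\nondeg}\to Q\cP(M,-1)^{\nondeg}$ over $\Delta$ is the homotopy fibre of the single map $\mu(-,\Delta)\co G\PreSp(A/R,-1)\to T_{\Delta}Q^{tw}\cP(M,-1)$ over $\sigma(\Delta)$, and then shows this map is a weak equivalence in one step because it is a morphism of complete bifiltered complexes inducing quasi-isomorphisms $\Omega^{p-2k}\hbar^k[2k-p]\to \HHom_A(\CoS^{p-2k}\Omega^1_{A/R}\ten_AM,M)\hbar^{p-k}$ (essentially powers of $\Delta_2^{\sharp}$) on each $\gr_G^k\gr_{G*\tilde F}^p$; completeness then gives the equivalence on the nose, with no induction, no obstruction spaces, and no separate passage to the limit. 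Your route buys uniformity with the non-affine case at the cost of extra bookkeeping; one small imprecision is that in the fibre direction over a fixed lift of $\Delta$ the term $\sigma$ is constant on $G\PreSp$, so the graded map whose invertibility you need is just $\mu(-,\pi_\Delta)$ rather than $\mu-\sigma$ — harmless, since translating by a constant in an abelian group object does not affect being a weak equivalence, but worth stating correctly.
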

\begin{proof}
We adapt the proof of \cite[Proposition \ref{poisson-compatP1}]{poisson}.
For any $\Delta \in Q\cP(M,-1)$, the homotopy fibre of $Q\Comp(A/R,-1)^{\nondeg} $ over $\Delta$ is just the homotopy fibre of
\[
\mu(-,\Delta)  \co G\PreSp(A/R,-1)  \to T_{\Delta}Q^{tw}\cP(M,-1) 
\]
over $-\pd_{\hbar^{-1}}(\Delta)$.

The map $\mu(-,\Delta) \co \DR'(A/R)\llbracket\hbar\rrbracket \to T_{\Delta}Q\widehat{\Pol}(M,-1)$ is a morphism of complete $G*\tilde{F}$-filtered $R\llbracket\hbar\rrbracket$-DGAAs by Lemma \ref{keylemma}. Since the morphism is $R\llbracket\hbar\rrbracket$-linear, it maps $G^k(G*\tilde{F})^p\DR'(A/R)\llbracket\hbar\rrbracket$ to $   G^k(G*\tilde{F})^pT_{\Delta}Q\widehat{\Pol}(M,-1)$. Non-degeneracy of $\Delta_2$ modulo $F_1$ implies that $\mu(-,\Delta)$ induces  quasi-isomorphisms
\[
  \Omega^{p-2k}\hbar^{k}[2k-p] \to \HHom_A(\CoS^{p-2k}\Omega^1_{A/R}\ten_AM, M)\hbar^{p-k}
\]
on the associated gradeds $\gr_G^k\gr_{(G*\tilde{F})}^p$.  We therefore have a quasi-isomorphism of bifiltered complexes, so we have isomorphisms on homotopy groups:
\begin{eqnarray*}
 \pi_jG\PreSp(A/R,-1)  &\to& \pi_jT_{\Delta}Q^{tw}\cP(M,-1)\\
 \H^{1-j}((G*\tilde{F})^2 \DR(A/R)\llbracket\hbar\rrbracket) &\to&  \H^{1-j}((G*\tilde{F})^2T_{\Delta}Q\widehat{\Pol}(M,-1)).
\end{eqnarray*}
\end{proof}

\subsection{Comparing quantisations and generalised symplectic structures}\label{comparisonsn}

We now investigate the extent to which the elements of a compatible pair determine each other.

\begin{definition}\label{Ndef}
Given a compatible pair  $(\omega, \pi) \in  \Comp(A,-1)= Q\Comp(M,-1)/G^1$, and $k \ge 0$, define the complex 
$
 N(\omega,\pi,k) 
$
to be the cocone of the map 
\begin{align*}
 \gr_G^k(G*\tilde{F})^2(\DR'(A/R)\llbracket\hbar\rrbracket\oplus \gr_G^k\tilde{F}^2Q\widehat{\Pol}(M,-1) 
 \to \gr_G^k(G*\tilde{F})^2T_{\pi}Q\widehat{\Pol}(M,-1)
\end{align*}
given by combining                                
\begin{align*}
 \gr_G^k  \mu(-,\pi) \co  \gr_G^k(G*\tilde{F})^2\DR'(A/R)\llbracket\hbar\rrbracket &\to  \gr_G^k(G*\tilde{F})^2T_{\pi}Q\widehat{\Pol}(M,-1) \\ 
F^{2-2k}\DR(A/R)\hbar^{k} &\to\prod_{i \ge (2-2k),0} \HHom_A(\CoS_A^{i}(\Omega^1_{A/R}),A)\hbar^{i+k}
\end{align*}
with the maps 
\begin{align*}
 \gr_G^k\nu(\omega, \pi) + \pd_{\hbar^{-1}} \co (\gr_G^k\tilde{F}^2Q\widehat{\Pol}(M,-1), \delta_{\pi}) &\to \gr_G^k(G*\tilde{F})^2T_{\pi}Q\widehat{\Pol}(M,-1)\\
\prod_{i \ge (2-k),0}\HHom_A(\CoS_A^{i}(\Omega^1_{A/R}),A)\hbar^{i+k-1}  &\to  \prod_{i \ge (2-2k),0} \HHom_A(\CoS_A^{i}(\Omega^1_{A/R}),A)\hbar^{i+k},
\end{align*}
 where                              
\[
 \nu(\omega, \pi)(b):= \nu(\omega, \pi, b).
\]
\end{definition}

It follows from the proof of  Proposition \ref{QcompatP1} that  the maps $\gr_G^k  \mu(-,\pi)$ are all $F$-filtered quasi-isomorphisms when $\pi$ is non-degenerate, so the projection maps $N(\omega,\pi,k) \to \gr_G^k\tilde{F}^2Q\widehat{\Pol}(M,-1)$ are also quasi-isomorphisms. The behaviour of the other projection is more subtle for low $k$, but it behaves well thereafter:

\begin{lemma}\label{tangentlemma}
The projection maps 
\[
 N(\omega,\pi,k) \to \hbar^{k}\DR(A/R)
\]
 are $F$-filtered quasi-isomorphisms for all $k \ge 2$.
\end{lemma}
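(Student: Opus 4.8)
The plan is to strip off the half of the cocone that is already understood and reduce to invertibility of $-\pd_{\hbar^{-1}}$ on an associated graded.

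\textbf{Reduction.} Write $X_1=\gr_G^k(G*\tilde F)^2\DR'(A/R)\llbracket\hbar\rrbracket$, which for $k\ge 1$ is just $\DR'(A/R)\hbar^k$ (as $F^{2-2k}\DR'=\DR'$), hence filtered quasi-isomorphic to $\hbar^k\DR(A/R)$; put $X_2=\gr_G^k\tilde F^2Q\widehat\Pol(M,-1)$ and $Y=\gr_G^k(G*\tilde F)^2T_\pi Q\widehat\Pol(M,-1)$, and let $f_1=\gr_G^k\mu(-,\pi)\co X_1\to Y$ and $f_2=\gr_G^k\nu(\omega,\pi,-)+\pd_{\hbar^{-1}}\co X_2\to Y$, so $N(\omega,\pi,k)$ is the cocone of $(f_1,f_2)\co X_1\oplus X_2\to Y$ (Definition \ref{Ndef}). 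Since $\pi$ is non-degenerate, the proof of Proposition \ref{QcompatP1} shows $f_1$ is an $F$-filtered quasi-isomorphism, so it admits a filtered homotopy inverse $f_1^{-1}$. Composing $(f_1,f_2)$ with the unipotent filtered automorphism $(x_1,x_2)\mapsto(x_1+f_1^{-1}f_2x_2,x_2)$ of $X_1\oplus X_2$ turns it into $(f_1,0)$, whose cocone is $\mathrm{fib}(f_1)\oplus X_2\simeq X_2$. Tracing this through, the first projection $N(\omega,\pi,k)\to X_2$ is the known filtered quasi-isomorphism, while the second projection $N(\omega,\pi,k)\to X_1$ corresponds, up to filtered homotopy, to $-f_1^{-1}\circ f_2$. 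Hence the Lemma is equivalent to the statement that $f_2\co X_2\to Y$ is an $F$-filtered quasi-isomorphism for $k\ge 2$.

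\textbf{Computing $f_2$ on $\gr_F$.} With the order (Hodge) filtration $F$ on $X_2$ and $Y$, one has $\gr_F^iX_2\cong\gr^F_i\sD_{A/R}(M)\hbar^{i+k-1}$ and $\gr_F^iY\cong\gr^F_i\sD_{A/R}(M)\hbar^{i+k}$, both with the differential induced by $[\delta_M,-]$ --- the brackets $[\pi,-]$ strictly raise the order and drop out on $\gr_F$. There $\pd_{\hbar^{-1}}=-\hbar^2\pd_\hbar$ acts as multiplication by $-(i+k-1)$. For the other summand, unwinding the explicit formula of Lemma \ref{mulemma1} while tracking the three weights (operator order, power of $\hbar$, polyvector degree) shows: $\gr_G^k\nu(\omega,\pi,-)$ only involves the leading component $\omega_0\in F^2\DR'(A/R)$ of $\omega$ (the $\hbar^m$-components with $m\ge 1$ raise the $\hbar$-weight without changing the order, hence land in higher $G$-degree); it preserves the order filtration; and on $\gr_F^i$ it depends only on the $2$-form part of $\omega_0$ and on $\pi_2$, acting on a symbol of order $i$ by replacing each of its $i$ tangent slots by the image under the composite $\sT_A\to\Omega^1_{A/R}\to\sT_A$ defined by $\omega_0$ and $\pi_2$. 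Because $(\omega,\pi)$ is a compatible pair with $\pi$ non-degenerate, that composite is homotopic to the identity (this is \cite[Lemma \ref{poisson-compatnondeg}]{poisson}), so the $\gr_F^i$-part of $\gr_G^k\nu(\omega,\pi,-)$ is homotopic to the Euler operator, i.e. multiplication by $i$. Altogether $f_2$ acts on $\gr_F^i$ as multiplication by $-(i+k-1)+i=1-k$, a quasi-isomorphism exactly when $k\ne 1$.

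\textbf{Conclusion and the main difficulty.} As $F$ is complete and exhaustive on $X_2$ and $Y$, a filtered map which is a quasi-isomorphism on every $\gr_F^i$ is an $F$-filtered quasi-isomorphism; this applies to $f_2$ whenever $1-k\ne 0$, i.e. for every $k\ge 2$, and with the Reduction step this proves the Lemma. The main obstacle is the middle step: carefully keeping track of the three interacting gradings to isolate the $\gr_F$-behaviour of $\gr_G^k\nu(\omega,\pi,-)$ as exactly the Euler operator, and pinning the resulting scalar to be $1-k$. This is also precisely where the hypothesis $k\ge 2$ is forced --- for $k=1$ the scalar is $0$ (equivalently $\pd_{\hbar^{-1}}$ has a kernel on $\gr_F^0$ and $f_2$ is not even surjective there), so the projection to $\DR(A/R)$ genuinely fails to be a quasi-isomorphism when $k=1$.
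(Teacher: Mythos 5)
Your core strategy matches the paper's: reduce the statement to showing that $f_2=\gr_G^k\nu(\omega,\pi,-)+\pd_{\hbar^{-1}}$ is an $F$-filtered quasi-isomorphism, pass to $\gr_F^p$, note that $\pd_{\hbar^{-1}}$ there is multiplication by $(1-p-k)\hbar$, and identify $\gr_F\gr_G\nu(\omega,\pi)$ as the derivation induced on $\CoS^p$ by $\hbar\,\pi^{\sharp}\circ\omega^{\sharp}$ on generators. However, there is a genuine gap: you invoke non-degeneracy of $\pi$ in an essential way (once to invert $f_1$ in your reduction, and again to conclude $\pi^{\sharp}\circ\omega^{\sharp}\simeq\id$, making the derivation the Euler operator $p$). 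The lemma is stated for an arbitrary compatible pair $(\omega,\pi)\in\Comp(A,-1)$, and the degenerate case is genuinely used later --- in Remark \ref{quantrmk} and in Lemma \ref{CompPObX} (hence in Propositions \ref{quantprop2} and \ref{prop3}), none of which are restricted to the non-degenerate locus. The paper's proof avoids the hypothesis: compatibility alone makes $\pi^{\sharp}\circ\omega^{\sharp}$ homotopy \emph{idempotent} (\cite[Example \ref{poisson-compatex1}]{poisson}), so the induced derivation on $\gr_F^p$ is homotopy diagonalisable with integer eigenvalues in $[0,p]$ (each of the $p$ slots contributing $0$ or $1$) rather than being exactly multiplication by $p$; adding the scalar $1-p-k$ puts all eigenvalues in $[1-p-k,\,1-k]$, which avoids $0$ precisely when $k\ge 2$. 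Your computation recovers only the top eigenvalue $1-k$ and misses the rest of the spectrum, which is where the argument would break for a degenerate $\pi$.

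Separately, your reduction step is where non-degeneracy first creeps in, and it is not needed there either: the projection $N(\omega,\pi,k)\to X_1$ has homotopy fibre the cocone of $f_2\co X_2\to Y$, so it is an $F$-filtered quasi-isomorphism if and only if $f_2$ is, with no need to invert $f_1$ or to introduce your change of basis. (As a minor point, the automorphism you wrote turns $(f_1,f_2)$ into $(f_1,2f_2)$ rather than $(f_1,0)$; you would want $(x_1,x_2)\mapsto(x_1-f_1^{-1}f_2x_2,\,x_2)$.) Your diagnosis of why $k=1$ fails is correct as far as it goes, but for the lemma as stated and used you need the idempotent rather than the identity version of the eigenvalue argument.
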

\begin{proof}
This amounts to showing that the map 
\[
 \gr_G^k\nu(\omega, \pi) + \pd_{\hbar^{-1}}
\]
is a filtered quasi-isomorphism. 
 It suffices to show that the associated maps
 \begin{align*}
 \gr_F^p\gr_G^k\nu(\omega, \pi) + \pd_{\hbar^{-1}} \co \gr_F^p\gr_G^k(G*\tilde{F})^2Q\widehat{\Pol}(M,-1) &\to \gr_F^p\gr_G^k(G*\tilde{F})^2T_{\pi}Q\widehat{\Pol}(M,-1)\\
\HHom_A(\CoS_A^{p}(\Omega^1_{A/R}),A)\hbar^{p+k-1}  &\to  \HHom_A(\CoS_A^{p}(\Omega^1_{A/R}),A)\hbar^{p+k},
\end{align*}
are quasi-isomorphisms for all $p \ge 0$.

Reasoning as in \cite[Lemma \ref{poisson-nondegtangent}]{poisson}, $\gr_{F}\gr_G\nu(\omega, \pi)$ is an $R\llbracket\hbar\rrbracket$-linear  derivation on $\gr_{F}\gr_GQ\widehat{\Pol}(M,-1) \cong \widehat{\Pol}(A,-1)\llbracket\hbar\rrbracket$ with respect to the commutative multiplication. It is given on generators $\HHom_A(\Omega^1_{A/R},A)$ by $\hbar\pi^{\sharp} \circ \omega^{\sharp}$. Compatibility of  $\omega$ and $\pi$ implies that $\pi^{\sharp} \circ \omega^{\sharp}$ is homotopy idempotent  by \cite[Example \ref{poisson-compatex1}]{poisson}. Thus $ \hbar^{-1}\gr_{F}^p\gr_G^k\nu(\omega, \pi)$ is homotopy diagonalisable, with integral eigenvalues in the interval $[0, p]$.

On the other hand, $\pd_{\hbar^{-1}}$ coincides on $\gr_{F}^p\gr_G^k $ with multiplication by $(1-p-k)\hbar $, so the eigenvalues of  $ \hbar^{-1}\gr_{F}^p\gr_G^k\nu(\omega, \pi) + \hbar^{-1}\pd_{\hbar^{-1}}$ lie in $[1-p-k, 1-k] $, giving a  quasi-isomorphism when $k > 1$.
\end{proof}


\begin{proposition}\label{quantprop}
The   maps
\begin{align*}
 Q\cP(M,-1)^{\nondeg}/G^k &\to (Q\cP(M,-1)^{\nondeg}/G^2)\by^h_{(G\Sp(A,-1)/G^2)}(G\Sp(A,-1)/G^k) \\ 
&\simeq (Q\cP(M,-1)^{\nondeg}/G^2)\by \prod_{i=2}^{k-1} \mmc(\DR(A/R)\hbar^i)
\end{align*}
coming from Proposition \ref{QcompatP1}  are weak equivalences for all $k \ge 2$.
\end{proposition}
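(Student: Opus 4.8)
The plan is to establish the statement first for the compatibility space and then transfer it through Proposition \ref{QcompatP1}. The proof of that proposition in fact produces a bifiltered quasi-isomorphism (the map $\mu(-,\Delta)$ induces isomorphisms on every $\gr_G^k\gr_{(G*\tilde{F})}^p$), so it shows more generally that
\[
Q\Comp(M,-1)^{\nondeg}/G^k \xra{\sim} Q\cP(M,-1)^{\nondeg}/G^k
\]
is a weak equivalence for each finite $k$, not just in the limit. Since the map of the Proposition is by construction the composite of a homotopy inverse of this equivalence with the projection $Q\Comp(M,-1)^{\nondeg}\to G\Sp(A/R,-1)$, it is enough to prove that
\[
Q\Comp(M,-1)^{\nondeg}/G^k \longrightarrow (Q\Comp(M,-1)^{\nondeg}/G^2)\by^h_{(G\Sp(A,-1)/G^2)}(G\Sp(A,-1)/G^k)
\]
is a weak equivalence for all $k\ge 2$, and then to compute the right-hand side.

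For the first point I would show that for each $k\ge 2$ the square
\[
\xymatrix@R=1.2em{
Q\Comp(M,-1)^{\nondeg}/G^{k+1} \ar[r] \ar[d] & Q\Comp(M,-1)^{\nondeg}/G^{k} \ar[d] \\
G\Sp(A,-1)/G^{k+1} \ar[r] & G\Sp(A,-1)/G^{k}
}
\]
is homotopy cartesian, and then paste. Over a chosen base point both horizontal maps are principal fibrations. The bottom one has homotopy fibre $\mmc$ of the $\hbar$-order-$k$ associated graded $\gr_G^k(G*\tilde{F})^2\DR'(A/R)\llbracket\hbar\rrbracket = \DR'(A/R)\hbar^k$ (here $F^{2-2k}\DR'=\DR'$ since $2-2k\le 0$), which is $\mmc(\DR(A/R)\hbar^k)$ by the comparison $\DR'(A/R)\simeq\DR(A/R)$. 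Since $Q\Comp(M,-1)$ is, by Definition \ref{Qcompdef}, the homotopy vanishing locus of a morphism between towers of $\mmc$'s of pro-nilpotent DGLAs, the top map has homotopy fibre $\mmc(N(\omega,\pi,k))$, where the $\hbar$-order-$k$ obstruction complex is precisely the cocone recorded in Definition \ref{Ndef}. Under these identifications the map of homotopy fibres induced by the projection is the projection $N(\omega,\pi,k)\to\hbar^k\DR(A/R)$, which is an $F$-filtered quasi-isomorphism for all $k\ge 2$ by Lemma \ref{tangentlemma}; hence the square is homotopy cartesian.

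Pasting these squares from level $k$ down to level $2$ gives $Q\Comp(M,-1)^{\nondeg}/G^k \simeq (Q\Comp(M,-1)^{\nondeg}/G^2)\by^h_{(G\Sp(A,-1)/G^2)}(G\Sp(A,-1)/G^k)$, which with the first paragraph yields the first displayed equivalence. For the second, the total differential on $(G*\tilde{F})^2\DR'(A/R)\llbracket\hbar\rrbracket$ preserves $\hbar$-degree, so the abelian-group tower $\{G\Sp(A,-1)/G^k\}$ splits off its successive fibres: $G\Sp(A,-1)/G^k\simeq (G\Sp(A,-1)/G^2)\by\prod_{i=2}^{k-1}\mmc(\DR(A/R)\hbar^i)$, the tower maps being the evident projections. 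Substituting this in and base-changing the trivial product projection gives $(Q\cP(M,-1)^{\nondeg}/G^2)\by\prod_{i=2}^{k-1}\mmc(\DR(A/R)\hbar^i)$, as claimed.

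I expect the \emph{main obstacle} to be the identification in the second paragraph of the homotopy fibre of $Q\Comp(M,-1)^{\nondeg}/G^{k+1}\to Q\Comp(M,-1)^{\nondeg}/G^k$ with $\mmc(N(\omega,\pi,k))$ compatibly with both projections. This means unwinding the homotopy-vanishing-locus description of $Q\Comp(M,-1)$ together with the convolution filtration $G*\tilde{F}$, and checking that the obstruction data in $\hbar$-degree $k$ is exactly the cocone of $\gr_G^k\mu(-,\pi)$ and $\gr_G^k\nu(\omega,\pi)+\pd_{\hbar^{-1}}$. Granting that, the remainder is formal manipulation of towers of principal fibrations, with Lemma \ref{tangentlemma} --- valid precisely in the range $k\ge 2$ --- providing the quasi-isomorphism on fibres.
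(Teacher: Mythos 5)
Your proposal is correct and follows essentially the same route as the paper: reduce to the tower of compatible pairs via Proposition \ref{QcompatP1}, compare the successive quotients $G^{k+1}\to G^k$ for the compatibility space and for $G\PreSp$ via their obstruction/principal-fibration structure, and invoke Lemma \ref{tangentlemma} (valid exactly for $k\ge 2$) to see that the induced map on obstructions is an equivalence, then induct. The step you flag as the main obstacle --- identifying the order-$k$ obstruction for $Q\Comp$ with $N(\omega,\pi,k)$ --- is precisely the point the paper also does not spell out, deferring instead to the obstruction calculus of \cite{poisson} and \cite{dmc}.
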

\begin{proof}
Proposition \ref{QcompatP1} gives equivalences between $Q\cP^{\nondeg}$ and $Q\Comp^{\nondeg}$.  Fix $(\omega, \pi) \in \Comp(A,-1)$ and denote homotopy fibres by subscripts.
Arguing as in  the proof of \cite[Proposition \ref{poisson-compatobs}]{poisson}, but 
with abelian (rather than central) extensions of DGLAs as in \cite[Lemma \ref{dmc-obsdgla}]{dmc}  gives a commutative diagram  
\[
\begin{CD}
 (Q\Comp(M,-1)/G^{k+1})_{(\omega, \pi)} @>>> (Q\Comp(M,-1)/G^k)_{(\omega,\pi)} @>>> \mmc(N(\omega,\pi,k)[1])\\
@VVV @VVV @VVV \\
(G\PreSp(A,-1)/G^{k+1})_{\omega}@>>> (G\PreSp(A,-1)/G^{k})_{\omega} @>>> \mmc(F^{2-2k}\hbar^{k}\DR(A/R)[1])
\end{CD}
\]
of  fibre sequences.

The right-hand map is a weak equivalence for $k \ge 2$, by Lemma \ref{tangentlemma}, so $ Q\Comp(M,-1)/G^{k+1} $ is equivalent to the homotopy fibre product 
\[
 (Q\Comp(M,-1)/G^k)\by^h_{G\PreSp(A,-1)/G^{k} }G\PreSp(A,-1)/G^{k+1},
\]
and the result follows by induction.
\end{proof}

\begin{remark}\label{quantrmk}
 Taking the limit over all $k$, Proposition \ref{quantprop}  gives an equivalence
\[
  Q\cP(M,-1)^{\nondeg} \simeq (Q\cP(M,-1)^{\nondeg}/G^2)\by \prod_{i \ge 2} \mmc(\DR(A/R)\hbar^i);
\]
in particular, this means that there is a canonical map 
\[
 (Q\cP(M,-1)^{\nondeg}/G^2) \to Q\cP(M,-1)^{\nondeg},
\]
corresponding to the distinguished point $0 \in \mmc( \hbar^2\DR(A/R)\llbracket\hbar\rrbracket)$.

Thus to quantise a non-degenerate $(-1)$-shifted Poisson structure $\pi =\sum_{j \ge 2} \pi_j$ (or equivalently, by \cite[Corollary \ref{poisson-compatcor2}]{poisson}, a $(-1)$-shifted symplectic  structure), it suffices to lift the power series $\sum_{j \ge 2} \pi_j (-\hbar)^{j-1}$ to a Maurer--Cartan element of $\prod_{j \ge 2} (F_j\sD(M)/F_{j-2})\hbar^{j-1}$.

Even in the degenerate case, the proof of Proposition \ref{quantprop} gives a sufficient first-order criterion  for quantisations to exist:
\[
 Q\Comp(M,-1) \simeq (Q\Comp(M,-1)/G^2)\by \prod_{i \ge 2} \mmc(\DR(A/R)\hbar^i).
\]
\end{remark}

\section{Quantisation for derived DM $N$-stacks}\label{DMsn}

In this section, we will globalise the results of the previous section to the setting of derived DM $N$-stacks.
In order to pass from derived affine schemes to derived DM stacks, we will exploit \'etale functoriality using Segal spaces.

The basic idea is that given a small category $I$, an $I$-diagram $A$ of CDGAs, and an $A$-module $M$ in $I$-diagrams, we can construct a DGAA $\sD_A(M)$ of differential operators of $M$. When  $M$ is a strict line bundle, $\sD_A(M)$ then gives rise to a filtered DGLA $Q\widehat{\Pol}(M,-1)$ governing $E_0$ quantisations of the diagram $M$. 

If we could choose appropriate restrictions on $(A,M)$ to ensure that $\sD_A(M)$ had the correct  homotopical properties, passage to Maurer--Cartan spaces would then naturally give a presentation of the $\infty$-category of $E_0$-quantisations as a derivator. 
However, this is not straightforward, since in order to compute both de Rham and Poisson cohomology correctly, we need the $A$-modules $\Omega^m_A$ to be both fibrant and cofibrant.

When $I$ is a category of  the form $[m]= (0 \to 1 \to \ldots \to m)$, and $A$ is fibrant and cofibrant in the injective model structure, this condition is satisfied, so we can construct Maurer--Cartan spaces of $[m]$-diagrams, providing all the data necessary  to form Segal spaces.

\subsection{Quantised polyvectors  for diagrams}\label{DMdiagsn}

We now construct differential operators and quantised polyvectors for suitable diagrams of derived affine schemes.

\begin{definition} \label{DMDdef}
Given a small category $I$, an $I$-diagram $A$ of $R$-CDGAs, and   $A$-modules $M,N$ in $I$-diagrams of cochain complexes, define the filtered cochain complex $\Diff(M,N)= \Diff_{A/R}(M,N)\subset \HHom_R(M,N)$ of differential operators from $M$ to $N$ as the equaliser of the obvious diagram
\[
\prod_{i\in I} \Diff_{A(i)/R}(M(i),N(i)) \implies \prod_{f\co i \to j \text{ in } I}   \Diff_{A(i)/R}(M(i),f_*N(j)),
\]
with the filtration $F_k \Diff(M,N)$ defined similarly. 

We then write $\sD(M)= \sD_{A/R}(M):= \Diff_{A/R}(M,M)$,   which we regard as a  DGAA under  composition. We simply write $\sD_A= \sD_{A/R}$ for  $\sD_{A/R}(A,A)$. 
\end{definition}

For $f \co i \to j$ a morphism in $I$, the maps 
\[
 \gr^F_{k} \Diff_{A(i)/R}(M(i),f_*N(j)) \to \HHom_{A(i)}(M(i)\ten_{A(i)}\CoS^k_{A(i)/R}\Omega^1_{A(i)},f_*N(j))
\]
are isomorphisms whenever $A(i)$ is semi-smooth and $M(i)^{\sharp}$ projective over $A(i)^{\sharp}$. When these conditions hold for all $i$, the maps
\[
 \gr^F_{k} \Diff_{A/R}(M,N) \to \HHom_A(M\ten_A\CoS^k_A\Omega^1_{A/R}, N) 
\]
are thus also isomorphisms. 

We now have analogues of all the constructions in \S\S \ref{qpolsn}, \ref{centresn}.

\begin{definition}\label{Iqpoldef}
Given an $I$-diagram $A$ of $R$-CDGAs, and an $I$-diagram $M$ of strict line bundles over $A$, define the filtered DGLA of quantised $(-1)$-shifted polyvector fields  on $M$ by
\begin{align*}
 Q\widehat{\Pol}(M,-1)&:= \prod_{j \ge 0}F_j\sD_{A/R}(M)\hbar^{j-1}\\
\tilde{F}^iQ\widehat{\Pol}(M,-1)&:= \prod_{j \ge i} F_j\sD_{A/R}(M)\hbar^{j-1}\\
G^kQ\widehat{\Pol}(M,-1)&:=\hbar^kQ\widehat{\Pol}(M,-1).
\end{align*}
\end{definition}

\begin{definition}\label{IQPdef}
 We then define $Q\cP(M,-1)$, $TQ\widehat{\Pol}(M,-1)$, $T_{\Delta}Q\widehat{\Pol}(M,-1)$, 
$TQ\cP(M,-1)$, $T_{\Delta}Q\cP(M,-1)$,
  $\sigma=-\pd_{\hbar^{-1}}\co Q\widehat{\Pol}(M,-1) \to TQ\widehat{\Pol}(M,-1)$ and  $\sigma \co Q\cP(M,-1)\to TQ\cP(M,-1)$ as before, replacing Definition \ref{qpoldef} with Definition \ref{Iqpoldef}. 
\end{definition}

Note that if $u \co I \to J$ is a morphism of small categories and $A$ is a functor from  $J$ to $R$-CDGAs with $B= A \circ u$, then we have natural maps $F(A) \to F(B)$ for all the constructions $F$ of Definition \ref{IQPdef}.

The following is \cite[Lemma \ref{poisson-calcTOmegalemma}]{poisson}:
\begin{lemma}\label{calcTOmegalemma}
 If $A$ is  $[n]$-diagram in   $R$-CDGAs which is cofibrant and  fibrant for the injective model structure (i.e. each $A(i)$ is cofibrant and the maps $A(i) \to A(i+1)$ are surjective), then $\HHom_A(\CoS_A^k\Omega^1_A,A)$ is a model for the derived $\Hom$-complex $\oR\HHom_A(\oL\CoS_A^k\oL\Omega^1_A,A)$, and $ \HHom_A(A,\Omega^m_A) \simeq \ho\Lim_i \oL\Omega^m_{A(i)}$.
\end{lemma}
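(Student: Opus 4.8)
The plan is to reduce the statement to a levelwise calculation on each $A(i)$ together with a comparison between the strict diagram-level $\HHom$ and a homotopy limit over the shape $[n]$.

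First I would record the input on each level. Cofibrancy of the CDGA $A(i)$ over $R$ makes $\Omega^1_{A(i)/R}$ a (degreewise projective, indeed quasi-free) $A(i)$-module that models the cotangent complex $\oL\Omega^1_{A(i)/R}$. Working over $\Q$, the functor $\CoS^k_{A(i)}=\Symm^k_{A(i)}$ is a retract of $\ten_{A(i)}^k$ and hence left Quillen on $A(i)$-modules, so $\CoS^k_{A(i)}\Omega^1_{A(i)}$ is again cofibrant and computes $\oL\CoS^k_{A(i)}\oL\Omega^1_{A(i)}$; similarly $\Omega^m_{A(i)}=\wedge^m_{A(i)}\Omega^1_{A(i)}$ models $\oL\Omega^m_{A(i)}$. (As flagged after Definition \ref{strictlb}, when $A(i)$ has positive-degree elements these modules need not be cofibrant in the projective model structure, but this is harmless: all that is used is that they are degreewise projective and flat, so the relevant $\Hom$ and $\ten$ are already derived.)

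Next I would assemble the diagram. Since every morphism of $[n]$ is a composite of consecutive ones, the equaliser defining $\HHom_A(M,N)$ is the iterated fibre product along the chain,
\[
 \HHom_A(M,N)= \HHom_{A(0)}(M(0),N(0))\by_{\HHom_{A(0)}(M(0),N(1))}\HHom_{A(1)}(M(1),N(1))\by_{\HHom_{A(1)}(M(1),N(2))}\cdots ,
\]
each square being formed from the map induced by $N(i)\to N(i+1)$ and the restriction map induced by $M(i)\to M(i+1)$. Injective-fibrancy makes every transition map $A(i)\onto A(i+1)$ surjective, so in the cases needed $N(i)\to N(i+1)$ is a surjection of cochain complexes (for $N=A$ this is the hypothesis; for $N=\Omega^m_A$ it holds because $\Omega^m$ of a surjection of CDGAs is surjective), and since $M(i)$ is degreewise projective over $A(i)$ the map $\HHom_{A(i)}(M(i),N(i))\to \HHom_{A(i)}(M(i),N(i+1))$ is then surjective, hence a fibration. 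As all cochain complexes are fibrant, each square is a homotopy fibre product, so $\HHom_A(M,N)\simeq \ho\Lim_{i\in[n]}\oR\HHom_{A(i)}(M(i),N(i))$, and taking a levelwise-cofibrant replacement of $M$ shows that this homotopy limit also computes the derived diagram-$\HHom$. Feeding in the levelwise input, the case $M=\CoS^k_A\Omega^1_A$, $N=A$ gives the first assertion; for $M=A$, $N=\Omega^m_A$ the restriction maps act as identities on $\HHom_{A(i)}(A(i),-)$, the fibre products collapse, and the homotopy limit is $\ho\Lim_i\oL\Omega^m_{A(i)}$.

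The main obstacle is the middle step: $\CoS^k_A\Omega^1_A$ and $\Omega^m_A$ are \emph{not} cofibrant as $[n]$-diagrams of $A$-modules — their transition maps are surjections, not cofibrations — so one cannot simply quote ``cofibrant $\HHom$ fibrant''. The resolution is to cut the diagram along the shape $[n]$, where cofibrancy is available level by level, and to verify that the gluing maps are fibrations so that the strict fibre products really do compute the homotopy limit; this is exactly where the surjectivity packaged into injective-fibrancy of $A$, as opposed to mere levelwise cofibrancy, is used. The remaining points — the exactness bookkeeping for $\CoS^k$, $\wedge^m$, and $\HHom_{A(i)}(-,-)$ applied to surjections — are routine.
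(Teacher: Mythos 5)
Your proof is correct, and it follows essentially the route of the cited reference (the paper itself gives no proof of Lemma \ref{calcTOmegalemma}, simply quoting \cite[Lemma \ref{poisson-calcTOmegalemma}]{poisson}): the two key points there are likewise that levelwise cofibrancy makes each $\CoS^k_{A(i)}\Omega^1_{A(i)}$ and $\Omega^m_{A(i)}$ compute the relevant derived functors, and that surjectivity of the transition maps $A(i)\onto A(i+1)$ turns the strict equaliser defining the diagram-level $\HHom$ into an iterated fibre product along fibrations, hence a homotopy limit. Your explicit identification of where injective-fibrancy (as opposed to mere levelwise cofibrancy) enters is exactly the right emphasis.
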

%

When $A$ satisfies  the conditions of Lemma \ref{calcTOmegalemma}, the lemma  combines with the observation above to show that every strict line bundle $M$ over $A$ satisfies
\[
 \gr^F_{k} \sD_{A/R}(M) \simeq \oR\HHom_A(\oL\CoS^k_A\oL\Omega^1_{A/R}, A). 
\]

\begin{definition}\label{ICompdef}
Given an $[m]$-diagram $A$ satisfying the conditions of Lemma \ref{calcTOmegalemma},   define  
\[
 G\PreSp(A/R,-1):=  G\PreSp(A(0)/R,-1)= \Lim_{i\in [m]}  G\PreSp(A(i)/R,-1),
\]
for the space $G\PreSp$ of generalised pre-symplectic structures of Definition \ref{GPreSpdef}. 

For a strict line bundle $M$ over $A$, define 
\[
 \mu \co G\PreSp(A/R,-1) \by Q\cP(M,-1) \to TQ^{tw}\cP(M,-1) 
\]
by setting $\mu(\omega, \Delta)(i):= \mu(\omega(i), \Delta(i)) \in TQ\cP(M(i),-1)$ for $i \in [m]$, and let $ Q\Comp(M,-1)$ be the homotopy vanishing locus of
\[
(\mu - \sigma) \co  G\PreSp(A/R,-1) \by Q\cP(M,-1) \to  TQ^{tw}\cP(M,-1).
\]
over $Q^{tw}\cP(M,-1)$.
\end{definition}

The following is  \cite[Lemma \ref{calcTlemma2}]{poisson}:
\begin{lemma}\label{calcTlemma2}
If $D=(A\to B)$ is a fibrant cofibrant  $[1]$-diagram of $R$-CDGAs which is formally \'etale in the sense that the map
$
 \Omega_{A}^1\ten_{A}B \to \Omega_{B}^1
$
is a quasi-isomorphism, then the map    
\[
 \HHom_D(\CoS_D^k\Omega^1_D,D) \to \HHom_{A}(\CoS_{A}^k\Omega^1_{A},A),
\]
is a quasi-isomorphism.
\end{lemma}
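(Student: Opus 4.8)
The plan is to reduce to a statement about associated gradeds for the order filtration $F$ and then to invoke étale base change for the cotangent complex. First I would observe that both sides are complete filtered complexes for the order filtration, with $\gr^F_k \HHom_D(\CoS_D^k\Omega^1_D, D)$ identified — using the semi-smoothness isomorphisms from the previous subsection together with Lemma \ref{calcTOmegalemma}, which applies since $D$ and $A$ are fibrant and cofibrant as $[1]$- resp.\ $[0]$-diagrams — with the derived $\Hom$-complex $\oR\HHom_D(\oL\CoS_D^k \oL\Omega^1_{D/R}, D)$, and similarly for $A$. So it suffices to show that for each $k\ge 0$ the natural map
\[
 \oR\HHom_D(\oL\CoS_D^k \oL\Omega^1_{D/R}, D) \to \oR\HHom_{A}(\oL\CoS_{A}^k \oL\Omega^1_{A/R}, A)
\]
is a quasi-isomorphism.

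The key input is that formal étaleness of $A \to B$, i.e.\ the quasi-isomorphism $\oL\Omega^1_{A/R}\ten_A B \xra{\sim} \oL\Omega^1_{B/R}$, propagates to all the symmetric powers: $\oL\CoS_D^k\oL\Omega^1_{D/R}$ is computed levelwise, and in level $B$ it is $\oL\CoS_B^k\oL\Omega^1_{B/R} \simeq (\oL\CoS_A^k\oL\Omega^1_{A/R})\ten_A B$. Hence the derived $\Hom$ out of $\oL\CoS_D^k\oL\Omega^1_{D/R}$ over the $[1]$-diagram $D$ is the equaliser (homotopy limit) of
\[
 \oR\HHom_A(\oL\CoS_A^k\oL\Omega^1_{A/R}, A) \rightrightarrows \oR\HHom_A(\oL\CoS_A^k\oL\Omega^1_{A/R}, B),
\]
where the two maps are the structure map $A\to B$ and the one coming from the identification above; but these two maps agree, so the homotopy equaliser is just the first term, namely $\oR\HHom_A(\oL\CoS_A^k\oL\Omega^1_{A/R}, A)$. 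This is exactly the target map, so we are done at the level of each $\gr^F_k$, and completeness of $F$ finishes the argument.

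The main obstacle I expect is the bookkeeping needed to justify that the homotopy limit over the category $[1]$ defining $\sD_D$ really does compute the homotopy equaliser above with the two maps being genuinely homotopic — this requires knowing that $\oL\Omega^1$ and its symmetric powers are simultaneously fibrant-cofibrant as $A$- resp.\ $D$-modules (so that the strict equaliser models the homotopy equaliser), which is precisely what the fibrant-cofibrant hypothesis on $D$ buys us via Lemma \ref{calcTOmegalemma}; one should also check that $\CoS^k$ of a formally étale map stays formally étale in the relevant derived sense, which is standard but needs the cotangent complex base-change formula. Everything else is a routine spectral-sequence-of-a-filtered-complex argument.
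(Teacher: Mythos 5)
Your overall strategy --- propagate formal \'etaleness from $\Omega^1$ to $\CoS^k\Omega^1$ and deduce that the $B$-component of the limit defining $\HHom_D$ contributes nothing --- is the right one, but the key step is mis-executed. First, the opening reduction to ``associated gradeds for the order filtration'' is out of place: the lemma is already a statement about the complexes $\HHom_D(\CoS_D^k\Omega^1_D,D)$, which carry no order filtration; they are themselves the graded pieces $\gr^F_k\sD_D$ in the intended application, so there is nothing to filter. Second, and more seriously, the homotopy limit over $[1]$ is not an equaliser of two parallel maps out of $\oR\HHom_A(\oL\CoS_A^k\oL\Omega^1_{A},A)$. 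Writing $M_A=\CoS_A^k\Omega^1_A$ and $M_B=\CoS_B^k\Omega^1_B$, the equaliser defining $\HHom_D(\CoS_D^k\Omega^1_D,D)$ unwinds to the fibre product
\[
\HHom_A(M_A,A)\by_{\HHom_A(M_A,B)}\HHom_B(M_B,B),
\]
i.e.\ the limit of a cospan whose two legs have \emph{different} sources, so the assertion ``the two maps agree, hence the homotopy equaliser is just the first term'' does not parse as stated; and even for genuinely parallel maps $f=g\co X\to Z$ of complexes, the homotopy equaliser is $X\oplus Z[-1]$ with a twisted differential, not $X$.

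The step you need instead is base change along one leg of a homotopy pullback. Fibrancy of $D$ (surjectivity of $A\to B$) together with projectivity of $M_A$ over $A^{\#}$ makes the leg $\HHom_A(M_A,A)\to\HHom_A(M_A,B)$ surjective, so the strict fibre product above computes the homotopy fibre product. Formal \'etaleness plus cofibrancy give that $\CoS_A^k\Omega^1_A\ten_AB\to\CoS_B^k\Omega^1_B$ is a quasi-isomorphism of modules with projective underlying graded modules (here your appeal to symmetric powers preserving the quasi-isomorphism is fine in characteristic $0$), whence the other leg $\HHom_B(M_B,B)\to\HHom_B(\CoS_A^k\Omega^1_A\ten_AB,B)\cong\HHom_A(M_A,B)$ is a quasi-isomorphism. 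The projection of the homotopy pullback onto $\HHom_A(M_A,A)$ is then a quasi-isomorphism, which is the statement of the lemma. So all your inputs are present, but the ``parallel maps agree'' step must be replaced by this pullback argument.
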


\subsection{Towers of obstructions}\label{obsn}

We now show how to adapt the various obstruction towers from \S \ref{affinesn} to apply to $[m]$-diagrams of derived affines.

\begin{definition}
 For  an $[m]$-diagram $A$ and $k \ge 1$, define 
\begin{align*}
  \Ob( Q\cP,A,k)&:= \mmc( F^2\widehat{\Pol}(A,-1) \oplus \hbar^k F^{2-k}\widehat{\Pol}(A,-1)[1]),\\
\Ob( Q^{tw}\cP,A,k)&:= \mmc( F^2\widehat{\Pol}(A,-1) \oplus \hbar^k F^{2-2k}\widehat{\Pol}(A,-1)[1]),
\end{align*}
where the DGLA structure is defined by regarding the second term as a module over the first. Note that these expressions only differ for $k=1$, as $F^{<0}=F^0$.
\end{definition}

Projection gives a fibration $\Ob(Q\cP,A,k) \to \cP(A,-1)=  Q\cP(A,-1)/G^1$, with the fibre over $\pi$ being $ \mmc(\hbar^{k} F^{2-k}T_{\pi} \widehat{\Pol}(A,-1)[1])$, and similarly for $\Ob(Q^{tw}\cP,A,k)$.

For strict line bundles $M$ over $I$-diagrams $A$, the  extension $\tilde{F}^2Q\widehat{\Pol}(M,-1)/G^{k+1} \to \tilde{F}^2Q\widehat{\Pol}(M,-1)/G^k$ of DGLAs, with abelian kernel $ \hbar^k F^{2-k}\widehat{\Pol}(A,-1)$, and its analogue for $(G*\tilde{F})^2$ give rise to canonical fibration sequences
\begin{align*}
 Q\cP(M,-1)/G^{k+1} \to Q\cP(M,-1)/G^k &\xra{\ob} \Ob( Q\cP,A,k),\\
Q^{tw}\cP(M,-1)/G^{k+1} \to Q^{tw}\cP(M,-1)/G^k &\xra{\ob} \Ob( Q^{tw}\cP,A,k)
\end{align*}
over $\cP(A,-1) $.

Similarly, we have a fibration sequence
\[
 G\PreSp(A/R,-1)/G^{k+1} \to G\PreSp(A/R,-1)/G^k \xra{\ob} \mmc(\hbar^k \DR(A(0)/R)[1]).
\]

We also have a map $\sigma = -\pd_{\hbar^{-1}} \co \Ob( Q\cP,A,k) \to \hbar \Ob( Q\cP,A,k)$, and  maps
\begin{align*}
  \PreSp(A/R,-1)\by\Ob( Q\cP,A,k)  \xra{\nu} \hbar \Ob( Q^{tw}\cP,A,k)\\
   \mmc(\hbar^k \DR(A(0)/R)[1])\by\cP(A,-1) \xra{\mu} \hbar \Ob( Q^{tw}\cP,A,k)
\end{align*}
given by $\nu(\omega, \pi, u)(i)= (\pi(i),\nu(\omega(i), \pi(i), u(i)))$ and $\mu(v, \pi)(i)= (\pi(i), \mu(v(i), \pi(i)))$ for $i \in [m]$. 

\begin{definition}\label{ObCompPdef}
 For $k \ge 1$, define $\Ob(Q\Comp/Q\cP, A,k)$ to be the homotopy vanishing locus of 
\[
\mu \co \mmc(\hbar^k \DR(A(0)/R)[1])\by\cP(A,-1) \to  \hbar \Ob( Q^{tw}\cP,A,k).
\]
over $\cP(A,-1)$
\end{definition}

Combining the earlier  fibration sequences with the definition of $Q\Comp$, we have
\begin{lemma}\label{compSpob}
 There is a natural obstruction map
\[
\ob \co (Q\Comp(M,-1)/G^k)\by^h_{ (Q\cP(M,-1)/G^k)} (Q\cP(M,-1)/G^{k+1})\to \Ob(Q\Comp/Q\cP, A,k)
\]
over $\cP(A,-1) $, whose homotopy vanishing locus is
\[
 Q\Comp(M,-1)/G^{k+1}.
\]
\end{lemma}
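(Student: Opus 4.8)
The plan is to assemble the obstruction map by juxtaposing the three fibration sequences already in hand and tracking the homotopy vanishing locus through each. Recall from Definition~\ref{Qcompdef} (in the diagram version, Definition~\ref{ICompdef}) that $Q\Comp(M,-1)/G^k$ is the homotopy vanishing locus of $(\mu-\sigma)$ over $Q^{tw}\cP(M,-1)/G^k$, fibred over $\cP(A,-1)$. First I would restrict attention to a fixed base point $\pi\in \cP(A,-1)$ and argue fibrewise; since all the constructions are fibred over $\cP(A,-1)$, it suffices to produce the obstruction map and identify its vanishing locus after passing to homotopy fibres over $\pi$, and this is compatible with varying $\pi$.

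Next I would use the canonical fibration sequences
\[
 Q\cP(M,-1)/G^{k+1} \to Q\cP(M,-1)/G^k \xra{\ob} \Ob(Q\cP,A,k)
\]
and the analogue for $Q^{tw}\cP$, together with the presymplectic fibration sequence
\[
 G\PreSp(A/R,-1)/G^{k+1} \to G\PreSp(A/R,-1)/G^k \xra{\ob} \mmc(\hbar^k \DR(A(0)/R)[1]),
\]
to express $Q\Comp(M,-1)/G^{k+1}$ as a homotopy fibre product of $Q\Comp(M,-1)/G^k$ with the next-order data $G\PreSp/G^{k+1}\by Q\cP/G^{k+1}$ over $G\PreSp/G^k\by Q\cP/G^k$, cut out by the compatibility condition. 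The key compatibility observation is that the maps $\nu$ and $\mu$ of the previous paragraph are precisely the derivatives of $(\mu-\sigma)$ in the $G\PreSp$- and $\cP$-directions respectively, so that the deformation of the condition $(\mu-\sigma)\simeq 0$ from order $k$ to order $k+1$ is governed by the linear map $\mu\co \mmc(\hbar^k\DR(A(0)/R)[1])\by\cP(A,-1) \to \hbar\,\Ob(Q^{tw}\cP,A,k)$, whose homotopy vanishing locus over $\cP(A,-1)$ is exactly $\Ob(Q\Comp/Q\cP,A,k)$ by Definition~\ref{ObCompPdef}. This is the same bookkeeping as in the proof of Proposition~\ref{quantprop}, now carried out one filtration step at a time rather than for $k\ge 2$ in bulk, and using abelian rather than central extensions of DGLAs as in \cite[Lemma~\ref{dmc-obsdgla}]{dmc}.

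Concretely, I would write down the commuting cube whose front face is the square
\[
\begin{CD}
 (Q\Comp(M,-1)/G^{k})\by^h (Q\cP/G^{k+1}) @>>> Q\Comp(M,-1)/G^{k} \\
 @VVV @VVV \\
 \Ob(Q\Comp/Q\cP,A,k) @>>> \mathrm{pt}
\end{CD}
\]
over $\cP(A,-1)$, with the obstruction map $\ob$ being the left vertical arrow; the back face records the corresponding statement at the level of the ambient spaces $Q\cP/G^{\bullet}$, $Q^{tw}\cP/G^{\bullet}$, $G\PreSp/G^{\bullet}$. Taking homotopy vanishing loci commutes with the homotopy limits defining the fibre products, so the homotopy vanishing locus of $\ob$ is identified with the homotopy vanishing locus of $(\mu-\sigma)$ at order $k+1$ over $Q^{tw}\cP/G^{k+1}$, which is $Q\Comp(M,-1)/G^{k+1}$ by definition. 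The main obstacle is the careful identification of the obstruction class: one must check that the connecting map for the abelian extension $\tilde F^2Q\widehat{\Pol}(M,-1)/G^{k+1}\to \tilde F^2Q\widehat{\Pol}(M,-1)/G^k$, composed with $(\mu-\sigma)$, factors through the map $\mu$ of Definition~\ref{ObCompPdef} into $\hbar\,\Ob(Q^{tw}\cP,A,k)$ and not merely into some larger target --- this is where the precise shape of the truncated complexes $F^{2-k}$ versus $F^{2-2k}$ matters, and where Lemma~\ref{keylemma} is needed to guarantee that $\mu(-,\Delta)$ remains a chain map modulo each $G^k$. Once that factorisation is in place, the rest is a diagram chase in the homotopy category.
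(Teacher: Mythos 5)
Your proposal is correct and is essentially the argument the paper intends: the paper offers no proof beyond asserting that the lemma follows from combining the fibration sequences of \S\ref{obsn} with the definition of $Q\Comp$ as a homotopy vanishing locus, and your elaboration (working fibrewise over $\cP(A,-1)$, identifying $\Ob(Q\Comp/Q\cP,A,k)$ as the homotopy vanishing locus of $\mu$ per Definition \ref{ObCompPdef}, and checking the filtration bookkeeping for $F^{2-k}$ versus $F^{2-2k}$) fills that in correctly. One small slip: your ``respectively'' is backwards --- $\nu(\omega,\pi,u)$ is the linearisation of $(\mu-\sigma)$ in the $\cP$-direction and $\mu(v,\pi)$ the linearisation in the $G\PreSp$-direction --- but since you correctly use $\mu$ as the governing map for this lemma (where the lift of $\Delta$ is already given and only $\omega$ varies), this does not affect the argument.
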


\begin{definition}\label{ObCompSdef}
 Define $\Ob(Q\Comp/Q\cS, A,k)$ to be the homotopy vanishing locus of 
\[
 (\nu - \sigma\circ \pr_2) \co  \Comp(A,-1)\by_{\cP(A,-1)}\Ob( Q\cP,A,k)\to \hbar \Ob( Q^{tw}\cP,A,k)
\]
over $\cP(A,-1) $.
\end{definition}

Combining the earlier  fibration sequences with the definition of $Q\Comp$, we also have:
\begin{lemma}\label{compPob}
 There is a natural obstruction map
\[
\ob \co (Q\Comp(M,-1)/G^k)\by^h_{ (G\PreSp(A,-1)/G^k)} (G\PreSp(A,-1)/G^{k+1})\to \Ob(Q\Comp/Q\cS, A,k)
\]
over $\Comp(A,-1) $, whose homotopy vanishing locus is
\[
 Q\Comp(M,-1)/G^{k+1}.
\]
\end{lemma}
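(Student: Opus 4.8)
The plan is to follow the proof of Lemma~\ref{compSpob}, interchanging the roles of the Poisson and the generalised pre-symplectic variables: there it is the Poisson structure that is carried to the next filtration level, here it is the pre-symplectic structure. By Definition~\ref{ICompdef}, $Q\Comp(M,-1)/G^{k+1}$ is the homotopy vanishing locus of $(\mu-\sigma)$ over $Q^{tw}\cP(M,-1)/G^{k+1}$, and the passage from the $G^k$-truncation to the $G^{k+1}$-truncation of each of $Q\cP(M,-1)$, $Q^{tw}\cP(M,-1)$ and $G\PreSp(A/R,-1)$ is governed by the three fibration sequences displayed just before Definition~\ref{ObCompPdef}, which arise from abelian extensions of filtered DGLAs with kernels $\hbar^kF^{2-k}\widehat{\Pol}(A,-1)$, $\hbar^kF^{2-2k}\widehat{\Pol}(A,-1)$ and $\hbar^k\DR(A(0)/R)$ in the sense of \cite[Lemma~\ref{dmc-obsdgla}]{dmc}. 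The first step is to record that the compatibility map $(\mu-\sigma)$ is a morphism of these towers: it commutes with the three obstruction maps $\ob$, and the map it induces on the fibres, i.e.\ on the $\gr_G^k$-pieces, is precisely the map $(\nu-\sigma\circ\pr_2)$ of Definition~\ref{ObCompSdef}. This is exactly the content of Lemma~\ref{keylemma}, which expresses the failure of $\mu(-,\Delta)$ to be a chain map in terms of $\nu$ and hence controls how $\mu$ varies when $\Delta$ is deformed; it is the same input used in the proof of Proposition~\ref{quantprop}, now organised so that the pre-symplectic lift, rather than the Poisson lift, is left free.

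Given this, the second step is a homotopy-limit rearrangement. One builds $Q\Comp(M,-1)/G^{k+1}$ as an iterated homotopy limit --- first imposing the equation $(\mu-\sigma)=0$ over $Q^{tw}\cP(M,-1)/G^{k+1}$, and then resolving each $/G^{k+1}$-truncation as the homotopy fibre of its obstruction map out of the corresponding $/G^k$-truncation --- and, since homotopy limits commute, reorganises the computation: first form $Q\Comp(M,-1)/G^k$; then, keeping the pre-symplectic structure free, take the homotopy fibre product with $G\PreSp(A/R,-1)/G^{k+1}$ over $G\PreSp(A/R,-1)/G^k$; and only then impose vanishing of the residual obstruction. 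By the first step, that residual obstruction is the composite of the obstruction maps of the three fibration sequences with $(\nu-\sigma\circ\pr_2)$, so by Definition~\ref{ObCompSdef} its target is exactly $\Ob(Q\Comp/Q\cS,A,k)$, with everything lying over $\Comp(A,-1)=Q\Comp(M,-1)/G^1$; and its homotopy vanishing locus is $Q\Comp(M,-1)/G^{k+1}$, which is the assertion.

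The hard part will be the bookkeeping required to see that every square of obstruction maps in play commutes up to coherent homotopy, and not merely on homotopy groups. Concretely, one must track the filtrations $G$, $\tilde F$ and $G*\tilde F$ through the identifications $\gr_G^k\tilde F^pQ\widehat{\Pol}(A,-1)\simeq F^{p-k}\widehat{\Pol}(A,-1)\hbar^k$ and their tangent analogues, and verify that these are natural both with respect to the structure maps of the towers and with respect to $\mu$, $\nu$ and $\sigma$. This is the same diagram-chase that underlies Lemma~\ref{tangentlemma} and Proposition~\ref{quantprop}; once it is in place --- and once one checks, as there, that the abelian-extension obstruction formalism of \cite[Lemma~\ref{dmc-obsdgla}]{dmc} applies because the kernels are modules over the relevant DGLAs --- the lemma follows formally, exactly as Lemma~\ref{compSpob} does.
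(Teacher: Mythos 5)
Your proposal is correct and follows essentially the same route as the paper, which gives no argument beyond ``combining the earlier fibration sequences with the definition of $Q\Comp$'': you spell out exactly that combination, identifying the linearisation of $(\mu-\sigma)$ in the $\Delta$-direction on the $\gr_G^k$-piece with $(\nu-\sigma\circ\pr_2)$ of Definition \ref{ObCompSdef} and then rearranging the homotopy limits. (The only quibble is attribution: the statement that the derivative of $\Delta\mapsto\mu(\omega,\Delta)$ is $\nu(\omega,\Delta,-)$ comes from the derivation property in Lemmas \ref{mulemma1}--\ref{QPolmudef} rather than from Lemma \ref{keylemma}, which instead controls the cocycle condition.)
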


\subsection{Descent and line bundles}\label{DMdescentsn}

In order to define our various structures on derived DM $N$-stacks, and to look at quantisations of line bundles, we now make use of \'etale descent and functoriality.


\begin{definition}
 Write $dg\CAlg(R)$ for the category of CDGAs over $R$, and let $dg\CAlg(R)_{c, \onto}\subset dg\CAlg(R) $ be the subcategory with all cofibrant $R$-CDGAs as objects, and only surjective morphisms.
\end{definition}

We already have simplicial set-valued functors $G\PreSp(-,-1)$ and $G\Sp(-,-1)$ from  $dg\CAlg(R)$ to $s\Set$, mapping quasi-isomorphisms in $dg\CAlg(R)_{c}$ to weak equivalences. Poisson structures and their quantisations are only functorial with respect to formally \'etale morphisms, in an $\infty$-functorial sense which we now make precise. 

Observe that,  when $F$ is any  of the constructions $Q\cP(-,-1)$, $Q\Comp(-,-1)$, $G\PreSp(-,-1)$, $\Ob( Q\cP,-,k)$, $\Ob(Q\Comp/Q\cP, -,k)$ or $\Ob(Q\Comp/Q\cS, -,k) $ applied to $[m]$-diagrams in $dg\CAlg(R)_{c, \onto}$, Lemmas \ref{calcTOmegalemma} and \ref{calcTlemma2} combine with  the obstruction calculus of \S \ref{obsn} to show that \cite[Properties \ref{poisson-Fproperties}]{poisson} are satisfied:
\begin{properties}\label{Fproperties}
\begin{enumerate}
 \item the maps from $F(A(0)\to \ldots \to A(m))$ to 
\[
 F(A(0)\to A(1))\by_{F(A(1)}^h F(A(1)\to A(2))\by^h_{F(A(2)}\ldots \by_{F(A(n-1)}^hF(A(n-1) \to A(n))
\]
 are weak equivalences;

\item if the $[1]$-diagram $A \to B$ is a quasi-isomorphism, then the natural maps from $F(A \to B)$ to $F(A)$ and to $F(B)$ are weak equivalences. 

\item if the $[1]$-diagram $A \to B$ is formally \'etale, then the natural map from $F(A \to B)$ to $F(A)$ is a  weak equivalence.
\end{enumerate}
\end{properties}

The first two properties ensure that the simplicial classes $\coprod_{ A \in B_mdg\CAlg(R)_{c, \onto}} F(A)$ fit together to give a complete Segal space $\int F$ over the nerve $Bdg\CAlg(R)_{c, \onto} $. Taking Segal spaces as our preferred model of $\infty$-categories, we define $\oL dg\CAlg(R)_{c, \onto}$ and  $\oL dg\CAlg(R)$ to be the $\infty$-categories obtained by localising the respective ($\infty$-)categories at quasi-isomorphisms or weak equivalences, and let  $\oL dg\CAlg(R)^{\et} \subset\oL dg\CAlg(R) $  the be the $\infty$-subcategory of homotopy formally \'etale morphisms. 

For any construction $F$ satisfying the conditions above,  \cite[Definition \ref{poisson-inftyFdef}]{poisson} then gives
an $\infty$-functor
\[
 \oR F \co \oL dg\CAlg(R)^{\et} \to  \oL s\Set
\]
to the $\infty$-category of simplicial sets, with the property that 
\[
 (\oR F)(A) \simeq F(A)
\]
for all cofibrant $R$-CDGAs $A$.


\begin{definition}\label{DMFdef}
Given a derived Deligne--Mumford $N$-stack $\fX$ and any of the constructions $F$ above, define
$
  F(\fX)
$
 to be the homotopy limit of $\oR F(A)$ over the $\infty$-category $(DG\Aff_{\et}\da \fX)$ consisting of  derived affines  $\Spec A$ equipped with homotopy \'etale (i.e. \'etale in the sense of \cite{hag2}) maps to  $\fX$, and all homotopy \'etale morphisms between them.  
\end{definition}

When $\fX\simeq  \Spec B$ is a derived affine, note that it is final in the category of derived affines over $\fX$, so $\oR F(\fX)= \oR F(B)= F(B)$. In general, it suffices to take the homotopy limit over any subcategory of  $(DG\Aff_{\et}\da \fX)$ with colimit $\fX$, so this definition also coincides with \cite[Definition \ref{poisson-DMFdef}]{poisson}, by applying it to a suitable hypergroupoid. 

Definition \ref{DMFdef} is insufficient for our purposes, as we wish to consider line bundles. Since DM stacks only involve CDGAs with non-positive cohomology, the line bundles we encounter will be locally trivial, so for now we only need to set up $\bG_m$-equivariance. 

\begin{definition}
 Define the functor $\bG_m$ from CDGAs to groups by 
\begin{align*}
 \bG_m(A) &:= \z^0(A)^{\by}.
\end{align*}

Given a string $A=(A(0) \to \ldots \to A(n))$ of CDGAs, we write $\bG_m(A):= \bG_m(A(0))$, regarded as $\Lim_i \bG_m(A(i))$.
\end{definition}

Now, the group $\bG_m(A)^{\by} $ acts by conjugation on $Q\widehat{\Pol}(A,-1) $, corresponding to automorphisms of $A$ as a line bundle over $A$.  This action  preserves all the filtrations, so it acts on the simplicial set  $ Q\cP(A,-1) = \mmc(\tilde{F}^2Q\widehat{\Pol}(A,-1))$ respecting the cofiltrations.  Note that the action is trivial on the quotient $Q\cP(A,-1)/G^1= \cP(A,-1)$. 

\begin{definition}
For any of the constructions $F$ above, let $\oR (F/^h\bG_m)$ be the $\infty$-functor on $\oL dg\CAlg(R)^{\et}$  given by applying the construction of  \cite[Definition \ref{poisson-DMFdef}]{poisson} to the homotopy quotient $F/^h\bG_m$, then taking \'etale hypersheafification. 
\end{definition}

Up to now, hypersheafification has not been necessary because all our functors have been hypersheaves --- this follows because the associated gradeds $\gr_{F}$ of obstruction functors can be written in terms of tangent sheaves and sheaves of differential forms. However, $B\bG_m$ requires hypersheafification because  the simplicial presheaf $B\bG_m$ does not preserve weak equivalences or satisfy \'etale descent.

For any derived line bundle $\sL$ on a derived stack $\fX$, there is an associated $\bG_m$-torsor given locally by the disjoint union of spaces of quasi-isomorphisms from $\O_{\fX}[m]$ to $\sL$ for all $m \in \Z$. 

\begin{definition}
 Given  a derived Deligne--Mumford $N$-stack $\fX$, a derived line bundle $\sL$ on $\fX$ and any of the constructions $F$ above, define
$
  F(\sL)
$
 to be the homotopy limit of $\oR(F/^h\bG_m)(A)\by_{\oR (*/^h\bG_m)(A)}^h\{\sL|_{A}\}$ over the $\infty$-category $(DG\Aff_{\et}\da \fX)$.
\end{definition}

\begin{remarks}
 If we fix $\pi \in \cP(\fX,-1)$, then observe that the homotopy fibres of  $ Q\cP(\sL,-1)\to Q\cP(\sL,-1)/G^1= \cP(M,-1)$ over $\pi$ can be combined and enhanced to a   dg category whose objects are $E_0$ quantisations $(\sL, \Delta)$ over $\pi$, with dg morphisms $(\sL,\Delta) \to (\sL',\Delta')$ given by the complex  $(\prod_{j \ge 0} F_{j}\mathscr{D}\!\mathit{iff}_{\fX/R}(\sL,\sL')\hbar^{j},(\delta + \Delta')_* \mp f(\delta + \Delta)^*)$ and the obvious composition law.

Also note that the action of $\bG_m(A)^{\by} $ is unipotent, as it is trivial on $\cP(A,-1)$, so it extends naturally to an action of $\bG_m(A)^{\by}\ten_{\Z}\Q$, and we can therefore define quantisations for \'etale $B(\bG_m/\mu_{\infty})$-torsors. In fact, more is true: line bundles $\sM$ with left $\sD$-module structure give equivalences $\sD(\sL) \simeq \sD(\sL \ten \sM)$ and hence $ Q\cP(\sL,-1) \simeq  Q\cP(\sL\ten \sM,-1)$. By sheafifying these equivalences, we get a notion of quantisation for all elements of $\mmc(F^1\DR(\fX/R)[1])$ via Chern classes.
\end{remarks}

\subsection{Comparing quantisations and generalised symplectic structures}\label{compDMsn}

We now fix a strongly quasi-compact derived  DM $N$-stack $\fX$ over $R$.

\begin{lemma}\label{CompPObX}
 For $(\omega, \pi) \in \Comp(\fX,-1)$, the homotopy fibre 
\[
\Ob( Q\Comp/Q\cS,\fX,k)_{(\omega, \pi)} \quad\text{ of }\quad
 \Ob( Q\Comp/Q\cS,\fX,k) \to \Comp(\fX,-1)
\]
over $(\omega,\pi)$ is contractible for all $k \ge 2$.
\end{lemma}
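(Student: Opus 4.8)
The plan is to reduce by \'etale descent to a statement about $[m]$-diagrams of CDGAs, then to recognise the homotopy fibre in question as the homotopy vanishing locus of precisely the map $\gr_G^k\nu(\omega,\pi)+\pd_{\hbar^{-1}}$ whose filtered quasi-isomorphism property (for $k\ge2$) was already established inside the proof of Lemma~\ref{tangentlemma}, and finally to conclude that a contractible Maurer--Cartan space results.

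First I would invoke the descent formalism of \S\ref{DMdescentsn}. By Definition~\ref{DMFdef} applied to the constructions listed in Properties~\ref{Fproperties}, both $\Ob(Q\Comp/Q\cS,\fX,k)$ and $\Comp(\fX,-1)$ are homotopy limits over $(DG\Aff_{\et}\da\fX)$ of the $\infty$-functors $\oR\Ob(Q\Comp/Q\cS,-,k)$ and $\oR\Comp(-,-1)$, whose values on an $[m]$-diagram $A=(A(0)\to\cdots\to A(m))$ cofibrant and fibrant for the injective model structure are computed by the diagram-level definitions of \S\ref{DMdiagsn}--\S\ref{obsn}; strong quasi-compactness of $\fX$ lets us take this homotopy limit over a small subcategory. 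Since homotopy fibres commute with homotopy limits, it suffices to show that for each such $A$ and each $(\omega,\pi)\in\Comp(A,-1)$ pulled back from $\fX$, the homotopy fibre $\Ob(Q\Comp/Q\cS,A,k)_{(\omega,\pi)}$ is contractible for all $k\ge2$.

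Next I would unwind Definition~\ref{ObCompSdef}. For $k\ge2$ we have $F^{2-k}=F^{2-2k}=F^{0}$, so the canonical map $\Ob(Q\cP,A,k)\to\Ob(Q^{tw}\cP,A,k)$ is an isomorphism; taking homotopy fibres over $(\omega,\pi)\in\Comp(A,-1)$, the domain $\Comp(A,-1)\times_{\cP(A,-1)}\Ob(Q\cP,A,k)$ restricts to $\Ob(Q\cP,A,k)_{\pi}=\mmc(\hbar^{k}F^{2-k}T_{\pi}\widehat\Pol(A,-1)[1])$, the target $\hbar\,\Ob(Q^{tw}\cP,A,k)$ restricts to $\mmc(\hbar^{k+1}F^{2-2k}T_{\pi}\widehat\Pol(A,-1)[1])$, and $\Ob(Q\Comp/Q\cS,A,k)_{(\omega,\pi)}$ becomes the homotopy vanishing locus of
\[
\nu(\omega,\pi,-)+\pd_{\hbar^{-1}}\co \mmc(\hbar^{k}F^{2-k}T_{\pi}\widehat\Pol(A,-1)[1])\longrightarrow \mmc(\hbar^{k+1}F^{2-2k}T_{\pi}\widehat\Pol(A,-1)[1])
\]
(recall $\sigma=-\pd_{\hbar^{-1}}$). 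Because in each $\Ob$ the second summand is an abelian module over the first, both sides are Maurer--Cartan spaces of abelian DGLAs, so this homotopy vanishing locus is the Maurer--Cartan space of the cocone (in the sense of Definition~\ref{Ndef}) of the underlying $R\llbracket\hbar\rrbracket$-linear map of cochain complexes; it is therefore contractible precisely when that map of complexes is a quasi-isomorphism.

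For $k\ge2$ this is exactly the map $\gr_G^k\nu(\omega,\pi)+\pd_{\hbar^{-1}}$ analysed in the proof of Lemma~\ref{tangentlemma}, and that analysis uses only compatibility of the pair $(\omega,\pi)$, not non-degeneracy: on $\gr_F^{p}$ (with $p\ge0$, which is the full range precisely because $F^{2-k}=F^0$ for $k\ge2$), the operator $\hbar^{-1}\gr_F^{p}\gr_G^{k}\nu(\omega,\pi)$ is a homotopy-diagonalisable derivation with integer eigenvalues in $[0,p]$, since $\pi^{\sharp}\circ\omega^{\sharp}$ is homotopy idempotent by \cite[Example~\ref{poisson-compatex1}]{poisson}, while $\pd_{\hbar^{-1}}$ acts on $\gr_F^{p}\gr_G^{k}$ by multiplication by $(1-p-k)\hbar$; hence $\hbar^{-1}(\gr_F^{p}\gr_G^{k}\nu(\omega,\pi)+\pd_{\hbar^{-1}})$ has eigenvalues in $[1-p-k,\,1-k]$, which omits $0$ exactly because $k\ge2$. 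By completeness of $F$ the map is then a filtered quasi-isomorphism, its cocone is acyclic, and the Maurer--Cartan space contractible; with the descent reduction this gives the lemma. The homotopical content is thus all inherited, and the step I expect to be the main obstacle is the bookkeeping in the previous paragraph --- matching powers of $\hbar$, the $[1]$-shifts and the truncations $F^{2-k}$ versus $F^{2-2k}$ between the $\Ob(-)$ and $Q\widehat\Pol(-)$ descriptions --- so that the reduction to the map of Lemma~\ref{tangentlemma} is literally valid; note in particular that $k=1$ is genuinely excluded, since there $1-k=0$ lies in the eigenvalue range.
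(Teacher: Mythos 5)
Your proposal is correct and follows essentially the same route as the paper: the paper's proof is precisely the observation that, as in the proof of Lemma \ref{tangentlemma}, the operator $\hbar^{-1}\nu(\omega,\pi,-)+\hbar^{-1}\pd_{\hbar^{-1}}$ has strictly negative eigenvalues on each $\gr_F^p$ when $k\ge 2$, hence is invertible, so the homotopy vanishing locus defining the fibre of $\Ob(Q\Comp/Q\cS,\fX,k)$ is contractible. Your additional bookkeeping (the descent reduction, the identification $F^{2-k}=F^{2-2k}=F^0$ for $k\ge2$, and the remark that only compatibility rather than non-degeneracy of $\pi$ is used) is accurate and just makes explicit what the paper leaves implicit.
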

\begin{proof}
As in the proof of Lemma \ref{tangentlemma}, the map $\hbar^{-1}\nu(\omega, \pi,-)- (p-1)$ on 
  \[
 \hbar^{p-1} \Ext^{2-i}_{\sO_{\fX}}(\oL\CoS_{\sO_{\fX}}^{p-k}\oL\Omega^1_{\fX/R},\sO_{\fX}) 
\]
is invertible for $k \ge 2$, so Lemma \ref{compPob} gives contractibility of  the homotopy fibre. 
\end{proof}

\begin{proposition}\label{quantprop2}
For any line bundle $\sL$ on $\fX$, the  map
\begin{align*}
 Q\Comp(\sL,-1) &\to (Q\Comp(\sL,-1)/G^2) \by^h_{(G\PreSp(\fX,-1)/G^2)} G\PreSp(\fX,-1)\\
&\simeq (Q\Comp(\sL,-1)/G^2)\by \prod_{i \ge 2} \mmc(\DR(\fX/R)\hbar^i).
\end{align*}
is a weak equivalence.
\end{proposition}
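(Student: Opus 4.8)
The plan is to adapt the proof of Proposition~\ref{quantprop} — or rather its unrestricted $Q\Comp$-form recorded in Remark~\ref{quantrmk} — by running the obstruction tower of \S\ref{obsn} over the derived DM stack $\fX$ and the line bundle $\sL$ instead of over a derived affine. Throughout I would use that the obstruction-theoretic fibration sequences of \S\ref{obsn}, and in particular the obstruction map of Lemma~\ref{compPob}, belong to the family of constructions $F$ satisfying Properties~\ref{Fproperties}; hence, by the $\infty$-functor formalism of \cite{poisson} together with the $\bG_m$-equivariant and hypersheafified enhancements of \S\ref{DMdescentsn}, they extend to $\fX$ and $\sL$ by taking homotopy limits over $(DG\Aff_{\et}\da\fX)$. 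So for every $k\ge 1$ there is an obstruction map
\[
\ob\co (Q\Comp(\sL,-1)/G^k)\by^h_{(G\PreSp(\fX,-1)/G^k)}(G\PreSp(\fX,-1)/G^{k+1})\to \Ob(Q\Comp/Q\cS,\fX,k)
\]
over $\Comp(\fX,-1)$, whose homotopy vanishing locus is $Q\Comp(\sL,-1)/G^{k+1}$.

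The first substantive step is to feed in Lemma~\ref{CompPObX}: for $k\ge 2$ the structure map $\Ob(Q\Comp/Q\cS,\fX,k)\to\Comp(\fX,-1)$ has contractible homotopy fibres, hence is a weak equivalence, so the homotopy vanishing locus of $\ob$ (which lies over $\Comp(\fX,-1)$) agrees with its source. This gives, for all $k\ge 2$,
\[
Q\Comp(\sL,-1)/G^{k+1}\simeq (Q\Comp(\sL,-1)/G^k)\by^h_{(G\PreSp(\fX,-1)/G^k)}(G\PreSp(\fX,-1)/G^{k+1}).
\]
Inducting on $k$ from $k=2$ and passing to the homotopy limit over $k$ (a tower of principal fibrations, so the limit is well-behaved), one obtains
\[
Q\Comp(\sL,-1)\simeq (Q\Comp(\sL,-1)/G^2)\by^h_{(G\PreSp(\fX,-1)/G^2)}G\PreSp(\fX,-1).
\]

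It remains to split off $G\PreSp(\fX,-1)$. Since $G\PreSp(\fX,-1)$ is weakly equivalent to the Dold--Kan denormalisation of $\tau^{\le 0}\big((G*\tilde{F})^2\DR(\fX/R)\llbracket\hbar\rrbracket[1]\big)$ (as after Definition~\ref{GPreSpdef}, extended to $\fX$ via descent) and the bracket on $\DR$ is trivial, the $G$-filtration on this complex splits, its graded pieces being $F^2\DR(\fX/R)$, $\hbar\DR(\fX/R)$ and $\hbar^i\DR(\fX/R)$ for $i\ge 2$ (using $F^{\le 0}\DR=\DR$); bundling the first two into $G\PreSp(\fX,-1)/G^2$ gives $G\PreSp(\fX,-1)\simeq (G\PreSp(\fX,-1)/G^2)\by\prod_{i\ge 2}\mmc(\DR(\fX/R)\hbar^i)$, compatibly with the projection to $G\PreSp(\fX,-1)/G^2$. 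Substituting this into the displayed equivalence and cancelling the $(G\PreSp(\fX,-1)/G^2)$-factor against the base of the fibre product yields the stated product decomposition.

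The step I expect to be the main obstacle is the first one: verifying that the homotopy vanishing loci and fibration sequences of \S\ref{obsn} are genuinely compatible with the homotopy limit over the \'etale site that defines $F(\sL)$ — including the passage through the homotopy quotient by $\bG_m$ and the hypersheafification — so that the identification of $Q\Comp(\sL,-1)/G^{k+1}$ with a homotopy fibre product over $\fX$ is literally the fibrewise statement of Lemmas~\ref{compPob} and~\ref{CompPObX} assembled over $(DG\Aff_{\et}\da\fX)$. Once that bookkeeping is in place, the remaining steps are formal manipulations of towers of principal fibrations, exactly as in the affine case of Proposition~\ref{quantprop} and Remark~\ref{quantrmk}.
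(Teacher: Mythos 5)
Your proposal is correct and follows essentially the same route as the paper: the paper's proof likewise combines Lemma~\ref{CompPObX} with the obstruction maps of Lemma~\ref{compPob} (transported to $\fX$ and $\sL$ via the descent formalism of \S\ref{DMdescentsn}) to obtain the fibre-product description of $Q\Comp(\sL,-1)/G^{k+1}$ for $k\ge 2$, and then concludes by the same induction and limit as in Proposition~\ref{quantprop}. Your extra remarks on the splitting of $G\PreSp$ and on the descent bookkeeping only make explicit what the paper leaves implicit.
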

\begin{proof}
This is essentially the same as the  proof of Proposition \ref{quantprop}. Lemma \ref{CompPObX} combines with the obstruction maps
\[
 (Q\Comp(\sL,-1)/G^k) \by^h_{(G\PreSp(\fX,-1)/G^k)}(G\PreSp(\fX,-1)/G^{k+1}) \to \Ob(Q\Comp/Q\cS,\fX, k)
\]
to give the weak equivalences 
\[
(Q\Comp(\sL,-1)/G^{k+1}) \simeq (Q\Comp(\sL,-1) /G^k)\by^h_{(G\PreSp(\fX,-1)/G^k)} (G\PreSp(\fX,-1)/G^{k+1}). 
\]
\end{proof}

\begin{definition}\label{nondegstack}
 Given a $(-1)$-shifted Poisson structure $\pi \in \cP(\fX,-1)$, we say that $\pi$ is non-degenerate if the induced map
\[
 \pi^{\sharp} \co \oL\Omega^1_{\fX} \to \oR\cHom_{\sO_X}(\oL\Omega^1_{\fX}, \sO_{\fX})[1]
\]
is a quasi-isomorphism of sheaves on $\fX$, and $\oL\Omega^1_{\fX} $ is perfect.
\end{definition}

\begin{lemma}\label{CompSpObX}
 If $\pi$ is a non-degenerate $(-1)$-shifted Poisson structure on $\fX$, then the homotopy fibre $\Ob( Q\Comp/Q\cP, \fX, k)_{\pi}$
of 
\[
 \Ob(Q\Comp/Q\cP, \fX, k) \to \cP(\fX,-1)
\]
over $\pi$ is contractible.
\end{lemma}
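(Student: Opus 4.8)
The plan is to unwind the definition of $\Ob(Q\Comp/Q\cP,\fX,k)$ so that the homotopy fibre in question becomes the homotopy fibre over $0$ of a single compatibility map, and then to show that map is a weak equivalence. By Definition~\ref{ObCompPdef} (applied over the \'etale site $(DG\Aff_{\et}\da\fX)$ as in \S\ref{DMdescentsn}), $\Ob(Q\Comp/Q\cP,\fX,k)$ is the homotopy vanishing locus over $\cP(\fX,-1)$ of
\[
\mu\co \mmc(\hbar^k\DR(\fX/R)[1])\by\cP(\fX,-1)\lra \hbar\,\Ob(Q^{tw}\cP,\fX,k).
\]
Passing to the homotopy fibre over $\pi$, the base collapses to a point and the homotopy vanishing locus becomes the homotopy fibre over $0$ of the induced map
\[
\mu(-,\pi)\co \mmc(\hbar^k\DR(\fX/R)[1])\lra \hbar\,\Ob(Q^{tw}\cP,\fX,k)_{\pi},
\]
both sides being Maurer--Cartan spaces of cochain complexes. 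Since the homotopy fibre over any point of a weak equivalence is contractible, it will suffice to prove that $\mu(-,\pi)$ is a weak equivalence whenever $\pi$ is non-degenerate; by passage to homotopy limits over $(DG\Aff_{\et}\da\fX)$, and since non-degeneracy is a local condition, it is enough to establish this for every derived affine $\Spec A$ \'etale over $\fX$.

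For the weak-equivalence step I would argue exactly as in the proof of Proposition~\ref{QcompatP1}. By Lemma~\ref{QPolmudef} the map $\mu(-,\pi)$ is a morphism of complete filtered complexes (filtered by powers of $\hbar$ and then by $F$), so it is enough to check that the associated graded maps $\gr_G^k\mu(-,\pi)$ of Definition~\ref{Ndef} are $F$-filtered quasi-isomorphisms. On associated gradeds this map is, up to the evident power of $\hbar$, the morphism
\[
\oL\CoS^{q}_{\sO_\fX}\oL\Omega^1_{\fX}[-q]\lra \oR\cHom_{\sO_\fX}(\oL\CoS^{q}_{\sO_\fX}\oL\Omega^1_{\fX},\sO_\fX)
\]
induced by $\CoS^{q}(\pi^{\sharp})$, and this is a quasi-isomorphism precisely because $\pi^{\sharp}\co\oL\Omega^1_\fX\to\oR\cHom_{\sO_\fX}(\oL\Omega^1_\fX,\sO_\fX)[1]$ is one and $\oL\Omega^1_\fX$ is perfect (Definition~\ref{nondegstack}). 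The only input beyond the affine statement of \S\ref{affinesn} is the identification of $\gr^F\sD_{\sO_\fX}$ with these derived $\cHom$-complexes, which is Lemma~\ref{calcTOmegalemma} together with the \'etale-descent formalism of \S\ref{DMdescentsn} (Lemma~\ref{calcTlemma2}, Properties~\ref{Fproperties}), used exactly as in the proofs of Proposition~\ref{quantprop2} and Lemma~\ref{CompPObX}; a line bundle is \'etale-locally trivial, so it plays no role on associated gradeds, consistent with the fact that $\Ob(Q\Comp/Q\cP,\fX,k)$ involves no choice of $\sL$.

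I expect no new idea is needed, only bookkeeping: matching the $G$- and $(G*\tilde F)$-filtration indices on the two sides of $\mu(-,\pi)$ so that the graded-piece computation reads off as the non-degeneracy quasi-isomorphism above, and checking, via the homotopy-limit description of $\Ob(Q\Comp/Q\cP,\fX,k)$ and the local nature of non-degeneracy, that the affine graded-piece statement already recorded after Definition~\ref{Ndef} transfers to $\fX$. In contrast with Lemma~\ref{tangentlemma} and Lemma~\ref{CompPObX}, no eigenvalue estimate is required here and there is no constraint on $k$: the relevant map is $\mu$ itself, which is a filtered quasi-isomorphism in every degree for non-degenerate $\pi$, rather than a combination $\nu-\sigma$ whose invertibility only holds for $k\ge2$.
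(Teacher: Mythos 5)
Your proposal is correct and follows essentially the same route as the paper: the paper likewise identifies the fibre $\Ob(Q\Comp/Q\cP,\fX,k)_{\pi}$ as the homotopy vanishing locus of $\mu(-,\pi)\co \mmc(\hbar^k\DR(\fX/R)[1])\to\hbar\,\Ob(Q^{tw}\cP,\fX,k)_{\pi}$ and observes that on (the cohomology of) associated graded pieces this map is $\Lambda^{p-2k}\pi^{\sharp}$, hence an isomorphism by non-degeneracy, so the homotopy fibre over $0$ is contractible. Your additional remarks on the \'etale-local reduction and on the absence of any constraint on $k$ (in contrast with the $\nu-\sigma$ eigenvalue argument of Lemmas \ref{tangentlemma} and \ref{CompPObX}) are accurate and consistent with the paper.
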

\begin{proof}
 The map
\[
 \mu(-,\pi)\co \H^{2-i}(\fX, \oL\Omega^{p-2k}_{\fX/R})\to  \hbar^p \Ext^{2-i}_{\sO_{\fX}}(\oL\CoS_{\sO_{\fX}}^{p-2k}\oL\Omega^1_{\fX/R},\sO_{\fX})
\]
is given by $\Lambda^{p-2k}\pi^{\sharp}$, so is an isomorphism by the non-degeneracy of $\pi$. Lemma \ref{compSpob} then gives contractibility of $\Ob( Q\Comp/Q\cP, \fX, k)_{\pi}$.
\end{proof}

\begin{proposition}\label{QcompatP2}
 For any  line bundle $\sL$ on $\fX$, the canonical map
\begin{eqnarray*}
    Q\Comp(\sL,-1)^{\nondeg} \to  Q\cP(\sL,-1)^{\nondeg}           
\end{eqnarray*}
 is a weak equivalence. In particular, there is a morphism
\[
  Q\cP(\sL,-1)^{\nondeg} \to G\Sp(\fX,-1)
\]
in the homotopy category of simplicial sets.
\end{proposition}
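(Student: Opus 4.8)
The plan is to globalise the obstruction-theoretic argument of Propositions~\ref{quantprop} and~\ref{quantprop2}, now run relative to $Q\cP$ rather than to $G\PreSp$, feeding in the non-degeneracy computation of Lemma~\ref{CompSpObX}. (The direct argument of Proposition~\ref{QcompatP1} does not globalise verbatim, because the relevant morphism $\mu(-,\Delta)$ is only defined on diagrams rather than on $\fX$ itself; the obstruction towers of \S\ref{obsn} are what descend cleanly.) Since $Q\Comp(\sL,-1)=\Lim_k Q\Comp(\sL,-1)/G^k$ and $Q\cP(\sL,-1)=\Lim_k Q\cP(\sL,-1)/G^k$ as limits of towers of fibrations, and since at every stage the non-degenerate locus is the preimage of $\cP(\fX,-1)^{\nondeg}\subset\cP(\fX,-1)=Q\cP(\sL,-1)/G^1$ (non-degeneracy of $\Delta$ being equivalent to that of its image $\pi_\Delta$), it suffices to show that the canonical map
\[
 Q\Comp(\sL,-1)^{\nondeg}/G^k \to Q\cP(\sL,-1)^{\nondeg}/G^k
\]
is a weak equivalence for all $k\ge 1$, and then pass to the limit. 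I would argue this by induction on $k$.

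The base case $k=1$ is the identification $Q\Comp(\sL,-1)/G^1=\Comp(\fX,-1)$, $Q\cP(\sL,-1)/G^1=\cP(\fX,-1)$, together with the unquantised correspondence between compatible pairs and non-degenerate $(-1)$-shifted Poisson structures on derived DM stacks; on non-degenerate loci this comparison map is a weak equivalence, which is the content of \cite{poisson} (equivalently, the restriction of Proposition~\ref{QcompatP1} to $k=1$, globalised as in \S\ref{DMsn}).

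For the inductive step, assume the claim for $k$. By the $\infty$-functoriality and $\bG_m$-equivariant descent of \S\ref{DMdescentsn} --- which establish Properties~\ref{Fproperties} for $Q\cP(-,-1)$, $Q\Comp(-,-1)$ and $\Ob(Q\Comp/Q\cP,-,k)$, and hence allow Lemma~\ref{compSpob} to be applied to the pair $(\fX,\sL)$ --- there is an obstruction map
\[
 \ob \co (Q\Comp(\sL,-1)/G^k)\by^h_{(Q\cP(\sL,-1)/G^k)}(Q\cP(\sL,-1)/G^{k+1}) \to \Ob(Q\Comp/Q\cP,\fX,k)
\]
over $\cP(\fX,-1)$ whose homotopy vanishing locus is $Q\Comp(\sL,-1)/G^{k+1}$. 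Restricting over $\cP(\fX,-1)^{\nondeg}$, Lemma~\ref{CompSpObX} shows that the homotopy fibres of $\Ob(Q\Comp/Q\cP,\fX,k)\to\cP(\fX,-1)$ are contractible there, so the restricted target is weakly equivalent to $\cP(\fX,-1)^{\nondeg}$ itself, the obstruction map is homotopic over $\cP(\fX,-1)^{\nondeg}$ to its structure map, and forming the homotopy vanishing locus leaves the source unchanged:
\[
 Q\Comp(\sL,-1)^{\nondeg}/G^{k+1}\simeq (Q\Comp(\sL,-1)^{\nondeg}/G^k)\by^h_{(Q\cP(\sL,-1)^{\nondeg}/G^k)}(Q\cP(\sL,-1)^{\nondeg}/G^{k+1}).
\]
The inductive hypothesis makes the first factor map by a weak equivalence onto the base of this homotopy fibre product, which therefore collapses to $Q\cP(\sL,-1)^{\nondeg}/G^{k+1}$, completing the induction; taking the limit over $k$ yields the weak equivalence $Q\Comp(\sL,-1)^{\nondeg}\simeq Q\cP(\sL,-1)^{\nondeg}$. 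Composing a homotopy inverse with the forgetful map $Q\Comp(\sL,-1)^{\nondeg}\to G\Sp(\fX,-1)$ --- which exists because a compatible pair $(\omega,\Delta)$ with $\Delta$ non-degenerate has symplectic leading term, as in \cite[Lemma~\ref{poisson-compatnondeg}]{poisson} --- gives the asserted morphism in the homotopy category of simplicial sets.

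The step needing most care is verifying that Lemma~\ref{compSpob}, stated for a single diagram $(A,M)$, genuinely transfers to $(\fX,\sL)$ compatibly with the homotopy limit over the \'etale site and with the $\bG_m$-homotopy-quotient, and that forming homotopy vanishing loci commutes with these homotopy limits; this, however, is exactly what Properties~\ref{Fproperties} and the constructions of \S\ref{DMdescentsn} were arranged to provide, so that once Lemma~\ref{CompSpObX} is in hand --- the only genuinely new input, supplying the stack-level non-degeneracy identification of the obstruction fibre --- the remainder of the argument is formal and parallels the proof of Proposition~\ref{quantprop2}.
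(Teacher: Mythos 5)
Your proposal is correct and follows essentially the same route as the paper: the paper's proof likewise combines the obstruction maps of Lemma \ref{compSpob} (transferred to $(\fX,\sL)$ via the descent machinery of \S\ref{DMdescentsn}) with the contractibility statement of Lemma \ref{CompSpObX} to identify $Q\Comp(\sL,-1)^{\nondeg}/G^{k+1}$ with the homotopy fibre product $(Q\Comp(\sL,-1)/G^k)\by^h_{(Q\cP(\sL,-1)/G^k)}(Q\cP(\sL,-1)^{\nondeg}/G^{k+1})$, and then collapses the tower by induction exactly as you do.
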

\begin{proof}
 This is much the same as Proposition \ref{QcompatP1}.  Lemma \ref{CompSpObX} combines with the obstruction maps
\[
 (Q\Comp(\sL,-1)/G^k) \by^h_{(Q\cP(\sL,-1)/G^k)}( Q\cP(\sL,-1)/G^{k+1}) \to \Ob(Q\Comp/Q\cP,\fX, k)
\]
to give the weak equivalences 
\begin{align*}
 &(Q\Comp(\sL,-1)^{\nondeg}/G^{k+1}) \simeq\\
 &(Q\Comp(\sL,-1) /G^k)\by^h_{(Q\cP(\sL,-1)/G^k)}( Q\cP(\sL,-1)^{\nondeg}/G^{k+1}).
\end{align*}
\end{proof}

\section{Quantisation for derived Artin $N$-stacks}\label{Artinsn}

In order to proceed further, we will make use of the \'etale resolutions of derived Artin stacks by stacky CDGAs given in  \cite[\S \ref{poisson-Artinsn}]{poisson}. We just extend the results of \S \ref{affinesn} from CDGAs to stacky CDGAs, and then the \'etale descent approach of \S \ref{DMsn} adapts immediately.

\subsection{Stacky CDGAs}\label{bicdgasn}
We now recall some definitions and lemmas from \cite[\S \ref{poisson-Artinsn}]{poisson}.
From now on, we will  regard the CDGAs encountered so far  as chain complexes $\ldots \xra{\delta} A_1 \xra{\delta} A_0 \xra{\delta} \ldots$ rather than cochain complexes --- this will enable us to distinguish easily between derived (chain) and stacky (cochain) structures.

\begin{definition}
A stacky CDGA is  a chain cochain complex $A^{\bt}_{\bt}$ equipped with a commutative product $A\ten A \to A$ and unit $k \to A$.  Given a chain  CDGA $R$, a stacky CDGA over $R$ is then a morphism $R \to A$ of stacky CDGAs. We write $DGdg\CAlg(R)$ for the category of  stacky CDGAs over $R$, and $DG^+dg\CAlg(R)$ for the full subcategory consisting of objects $A$ concentrated in non-negative cochain degrees.
\end{definition}

When working with chain cochain complexes $V^{\bt}_{\bt}$, we will usually denote the chain differential by $\delta \co V^i_j \to V^i_{j-1}$, and the cochain differential by $\pd \co V^i_j \to V^{i+1}_j$.

\begin{definition}
 Say that a morphism $U \to V$ of chain cochain complexes is a levelwise quasi-isomorphism if $U^i \to V^i$ is a quasi-isomorphism for all $i \in \Z$. Say that a morphism of stacky CDGAs is a levelwise quasi-isomorphism if the underlying morphism of chain cochain complexes is so.
\end{definition}

The following is \cite[Lemma \ref{poisson-bicdgamodel}]{poisson}:
\begin{lemma}\label{bicdgamodel}
There is a cofibrantly generated model structure on stacky CDGAs over $R$ in which fibrations are surjections and weak equivalences are levelwise quasi-isomorphisms. 
\end{lemma}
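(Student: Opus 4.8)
The plan is to obtain this model structure by transfer from the underlying category of chain cochain complexes along the free commutative algebra functor, exactly as one builds the model structure on CDGAs over $\Q$. First I would equip the category $\CCC(R)$ of chain cochain complexes of $R$-modules with a cofibrantly generated symmetric monoidal model structure whose weak equivalences are the levelwise quasi-isomorphisms and whose fibrations are the (bidegreewise) surjections. One concrete way is to view $\CCC(R)$ as the category of cochain complexes in $\Ch(R)$, forget the cochain differential to land in the product category $\prod_{m\in\Z}\Ch(R)$ carrying the product of the projective model structures, and transfer back along the left adjoint ("adjoin a free cochain differential") of the forgetful functor; smallness and acyclicity here are routine since $\prod_m\Ch(R)$ is combinatorial. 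The generating cofibrations are then the sphere--disk inclusions placed in each cochain degree, and the generating trivial cofibrations the maps $0\to D^n_m$. Since $\Q\subseteq R$, the symmetriser $\tfrac1{n!}\sum_{\sigma\in\Sigma_n}\sigma$ exhibits $\Sym^n_R(-)$ as a retract of $(-)^{\ten_R n}$, and from this one checks that this model structure satisfies the pushout--product and monoid axioms together with the commutative monoid axiom.

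Next I would apply the standard transfer theorem along the adjunction $\Sym_R\co\CCC(R)\rightleftarrows DGdg\CAlg(R)$, declaring a morphism of bi-CDGAs to be a weak equivalence, resp. fibration, iff its underlying morphism of chain cochain complexes is one --- these are precisely the levelwise quasi-isomorphisms, resp. surjections, of the statement, and the generating (trivial) cofibrations of the transferred structure become $\Sym_R$ applied to those above. The category $DGdg\CAlg(R)$ is locally presentable, being the category of algebras for a finitary monad on a locally presentable category, so it is complete and cocomplete and the smallness hypothesis of the transfer theorem is automatic; it therefore remains only to verify the acyclicity condition.

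That acyclicity condition is the one substantive point, and the place where characteristic zero is essential: every pushout in $DGdg\CAlg(R)$ of a map $\Sym_R(W)\to\Sym_R(W')$ with $W\to W'$ a generating trivial cofibration, together with every transfinite composite and filtered colimit of such, must be a levelwise quasi-isomorphism. Writing $W'\cong W\oplus C$ with $C\cong W'/W$ a bounded complex of free $R$-modules that is levelwise acyclic (indeed contractible), the pushout of $\Sym_R(W)\to\Sym_R(W')$ along a map $\Sym_R(W)\to A$ is $A\ten_R\Sym_R(C)=\bigoplus_{n\ge 0}A\ten_R\Sym^n_R(C)$, and the finite-length filtration by $\bigoplus_{n\le N}$ has associated graded $\bigoplus_n A\ten_R\Sym^n_R(C)$. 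Each $\Sym^n_R(C)$ is a retract of the contractible complex $C^{\ten_R n}$, hence itself levelwise contractible, so $A\ten_R\Sym^n_R(C)$ is levelwise acyclic and the inclusion $A\to A\ten_R\Sym_R(C)$ of the $n=0$ summand is a levelwise quasi-isomorphism. Stability of levelwise quasi-isomorphisms under these extensions and under filtered colimits (exactness of filtered colimits of $R$-modules) then yields acyclicity, and the transfer theorem produces the asserted cofibrantly generated model structure. Everything beyond this characteristic-zero bookkeeping is formal.
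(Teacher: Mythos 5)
Your proof is correct and follows essentially the same route as the source: the paper itself gives no argument here but cites \cite{poisson}, where the lemma is established by exactly this kind of transfer along the free--forgetful adjunction with chain cochain complexes, the only substantive point being that in characteristic zero the pushout of a generating trivial cofibration $\Sym_R(W)\to\Sym_R(W')$ is a levelwise quasi-isomorphism because $\Sym^n_R(C)$ is a retract of the contractible $C^{\ten_R n}$. Your write-up supplies the standard details (local presentability, smallness, the finite filtration of $A\ten_R\Sym_R(C)$) and I see no gap.
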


There is a denormalisation functor $D$ from non-negatively graded CDGAs to cosimplicial algebras, with 
 left adjoint $D^*$ as in \cite[Definition \ref{ddt1-nabla}]{ddt1}. 
Given a cosimplicial chain CDGA $A$, $D^*A$ is then a stacky CDGA in non-negative cochain degrees. By  \cite[Lemma \ref{poisson-Dstarlemma}]{poisson}, $D^*$ is a left Quillen functor from the Reedy model structure on cosimplicial chain CDGAs to the model structure of Lemma \ref{bicdgamodel}.

Since   $DA$ is a pro-nilpotent extension of $A^0$, when $\H_{<0}(A)=0$ we think of the hypersheaf  $\oR \Spec DA$ as a stacky derived thickening of the derived affine scheme $\oR \Spec A^0$.  

\begin{definition}
 Given a chain cochain complex $V$, define the cochain complex $\hat{\Tot} V \subset \Tot^{\Pi}V$ by
\[
(\hat{\Tot} V)^m := (\bigoplus_{i < 0} V^i_{i-m}) \oplus (\prod_{i\ge 0}   V^i_{i-m})
\]
with differential $\pd \pm \delta$.
\end{definition}

\begin{definition}
 Given a stacky CDGA $A$ and $A$-modules $M,N$ in chain cochain complexes, we define  internal $\Hom$s
$\cHom_A(M,N)$  by
\[
 \cHom_A(M,N)^i_j=  \Hom_{A^{\#}_{\#}}(M^{\#}_{\#},N^{\#[i]}_{\#[j]}),
\]
with differentials  $\pd f:= \pd_N \circ f \pm f \circ \pd_M$ and  $\delta f:= \delta_N \circ f \pm f \circ \delta_M$,
where $V^{\#}_{\#}$ denotes the bigraded vector space underlying a chain cochain complex $V$. 

We then define the  $\Hom$ complex $\hat{\HHom}_A(M,N)$ by
\[
 \hat{\HHom}_A(M,N):= \hat{\Tot} \cHom_A(M,N).
\]
\end{definition}
Note that  there is a multiplication $\hat{\HHom}_A(M,N)\ten \hat{\HHom}_A(N,P)\to \hat{\HHom}_A(M,P)$.

\begin{definition}\label{hfetdef}
 A morphism  $A \to B$ in $DG^+dg\CAlg(R)$ is said to be  homotopy formally \'etale when the map
\[
 \{\Tot \sigma^{\le q} (\oL\Omega_{A}^1\ten_{A}^{\oL}B^0)\}_q \to \{\Tot \sigma^{\le q}(\oL\Omega_{B}^1\ten_B^{\oL}B^0)\}_q
\]
on the systems of brutal cotruncations is a pro-quasi-isomorphism.
\end{definition}

Combining \cite[Proposition \ref{poisson-replaceprop}]{poisson} with  \cite[Theorem \ref{stacks2-bigthm} and Corollary \ref{stacks2-Dequivcor}]{stacks2}, every strongly quasi-compact derived Artin $N$-stack over $R$ can be resolved by a homotopy formally \'etale cosimplicial diagram in $DG^+dg\CAlg(R)$.

\subsection{Quantised polyvectors}\label{biquantsn}
We now fix a  stacky CDGA $A$ over a chain CDGA $R$.

\begin{definition}
Given  $A$-modules $M,N$ in chain  cochain complexes, inductively define the 
filtered chain cochain complex $\cDiff(M,N)= \cDiff_{A/R}(M,N)\subset \cHom_R(M,N)$ of differential operators from $M$ to $N$  by setting
\begin{enumerate}
 \item $F_0 \cDiff(M,N)= \cHom_A(M,N)$,
\item $F_{k+1} \cDiff(M,N)=\{ u \in \cHom_R(M,N)~:~  [a,u]\in F_{k} \cDiff(M,N)\, \forall a \in A \}$, where $[a,u]= au- (-1)^{\deg a\deg u} ua$.
\item $\cDiff(M,N)= \LLim_k F_k\cDiff(M,N)$.
\end{enumerate}

We then define the filtered cochain complex $\hat{\Diff}(M,N)= \hat{\Diff}_{A/R}(M,N)\subset \hat{\HHom}_R(M,N)$ by $\hat{\Diff}(M,N):= \LLim_k \hat{\Tot} F_k\cDiff(M,N)$.
\end{definition}
%

\begin{definition}
Given  an $A$-module $M$ in chain cochain complexes, write $\sD(M)= \sD_{A/R}(M):= \hat{\Diff}_{A/R}(M,M)$,   which we regard as a  sub-DGAA of $\hat{\HHom}_R(M,M)$. We simply write $\sD_A= \sD_{A/R}$ for  $\sD_{A/R}(A,A)$. 
\end{definition}

\begin{definition}\label{bistrictlb}
 Define a strict line bundle over $A$ to be an $A$-module $M$ in chain cochain complexes such that $M^{\#}_{\#}$ is a projective module of rank $1$ over the bigraded-commutative algebra $A^{\#}_{\#}$ underlying $A$. 
\end{definition}

Definitions \ref{qpoldef} and \ref{QFdef}  then carry over verbatim to   define  quantised polyvectors over a stacky CDGA, and the filtrations $\tilde{F}$, $G$, and $G*\tilde{F}$.

We now follow \cite[\S \ref{poisson-bipoisssn}]{poisson} in making the following assumptions on $A \in DG^+dg\CAlg(R)$:
\begin{enumerate}
 \item for any cofibrant replacement $\tilde{A}\to A$ in the model structure of Lemma \ref{bicdgamodel}, the morphism $\Omega^1_{\tilde{A}/R}\to \Omega^1_{A/R}$ is a levelwise quasi-isomorphism,
\item  the $A^{\#}$-module $(\Omega^1_{A/R})^{\#}$  in graded chain complexes is cofibrant (i.e. it has the left lifting property with respect to all surjections of $A^{\#}$-modules in graded chain complexes),
\item there exists $N$ for which the chain complexes $(\Omega^1_{A/R}\ten_AA^0)^i $ are acyclic for all $i >N$.
\end{enumerate}
These conditions are satisfied by $D^*O(X)$ for DG Artin hypergroupoids $X$.

For stacky CDGAs of this form, Definition \ref{Qpoissdef} adapts verbatim to define the space $Q\cP(M,-1)$ of $E_0$ quantisations of a strict line bundle $M$, and its twisted counterpart $Q^{tw}\cP(M,-1) $. 
The second assumption gives us isomorphisms
\begin{align*}
  F^{p-2i}\widehat{\Pol}(A,-1)\hbar^{i} &\to \gr_G^i(G*\tilde{F})^pQ\widehat{\Pol}(M,-1),\\
T_{\pi_{\Delta}}F^{p-2i}\widehat{\Pol}(A,-1)\hbar^{i} &\to \gr_G^i(G*\tilde{F})^pT_{\Delta}Q\widehat{\Pol}(M,-1).
\end{align*}

The following is a slight generalisation of \cite[Definition \ref{poisson-binondegdef}]{poisson}:
\begin{definition}\label{biQnonnegdef}
 Say that an $E_0$ quantisation $\Delta =  \sum_{j \ge 2} \Delta_j \hbar^{j}$ of a strict line bundle $M$ over $A$ is non-degenerate if the map 
\[
\Delta_2^{\sharp}\co  \Tot^{\Pi}(M^0\ten_A \Omega^1_A) \to \hat{\HHom}_A(\Omega^1_A,M^0)[1]
\]
of is a quasi-isomorphism, and $\Tot^{\Pi}(\Omega_A^1\ten_AA^0)$ is a perfect complex over $A^0$.
\end{definition}

Definitions \ref{TQPdef} and \ref{Qsigmadef} adapt verbatim, giving tangent spaces
 $TQ\cP(M,-1)$, $TQ\cP(M,-1)/G^k$, $TQ^{tw}\cP(M,-1)$, $TQ^{tw}\cP(M,-1)/G^k$and a  canonical tangent vector
\[
  \sigma=-\pd_{\hbar^{-1}}\co Q\widehat{\Pol}(M,-1) \to TQ\widehat{\Pol}(M,-1).
\]

\subsection{Generalised symplectic structures and compatible quantisations}

The following is \cite[Definition \ref{poisson-biDRdef}]{poisson}
\begin{definition}\label{biDRdef}
Define the de Rham complex $\DR(A/R)$ to be the product total complex of the bicomplex
\[
 \Tot^{\Pi} A \xra{d} \Tot^{\Pi}\Omega^1_{A/R} \xra{d} \Tot^{\Pi}\Omega^2_{A/R}\xra{d} \ldots,
\]
so the total differential is $d \pm \delta \pm \pd$.

We define the Hodge filtration $F$ on  $\DR(A/R)$ by setting $F^p\DR(A/R) \subset \DR(A/R)$ to consist of terms $\Tot^{\Pi}\Omega^i_{A/R}$ with $i \ge p$.
\end{definition}

We then similarly define $\DR'(A/R)$ to be the  (triple) product total complex
\[
 \DR'(A/R):=\Tot^{\Pi} N\hat{A}^{\bt +1}
\]
regarded as a filtered DGAA over $R$, with $F^p\DR'(A/R):=\Tot^{\Pi} NF^p\hat{A}^{\bt +1}$. Definition \ref{tildeFDRdef} then carries over to give a filtration $\tilde{F}$ on $DR'(A/R)\llbracket \hbar \rrbracket$.

Definition \ref{GPreSpdef} carries over to give a space $G\PreSp(A/R,-1)$ of generalised $(-1)$-shifted pre-symplectic structures on $A/R$. We say that a generalised pre-symplectic structure $\omega$ is symplectic if its leading term $\omega_0 \in \PreSp(A/R,-1)$ is symplectic in the sense of \cite[Definition \ref{poisson-biPreSpdef}]{poisson}; explicitly, this says that 
$\Tot^{\Pi} (\Omega_A^1\ten_AA^0)$ is a perfect complex over $A^0$ and the map
\[
[\omega_0]^{\sharp}\co \hat{\HHom}_A(\Omega^1_A, A^0)[-n]\to  \Tot^{\Pi} (\Omega_{A/R}^1\ten_AA^0) 
\]
is a quasi-isomorphism.  
We then let $\Sp(A/R,n) \subset \PreSp(A/R,n)$  consist of the symplectic structures --- this is a union of path-components.
 
Lemmas \ref{QPolmudef} and \ref{keylemma} then adapt to give  compatible maps
\[
 \mu(-,\Delta) \co \DR'(A/R)\llbracket\hbar\rrbracket/G^k \to T_{\Delta}Q\widehat{\Pol}(M,-1)/G^k
\]
respecting the filtrations $(G*\tilde{F})$.

We may now define the space $Q\Comp(A/R, n)$ of compatible quantisations as in Definition \ref{Qcompdef}, with Proposition \ref{QcompatP1} adapting to show that 
\[
 Q\Comp(M,-1)^{\nondeg} \to  Q\cP(M,-1)^{\nondeg}
\]
is a weak  equivalence and Proposition \ref{quantprop} adapting to show that 
the resulting  maps
\begin{align*}
 Q\cP(M,-1)^{\nondeg}/G^k &\to (Q\cP(M,-1)^{\nondeg}/G^2)\by^h_{(G\Sp(A,-1)/G^2)}(G\Sp(A,-1)/G^k) \\ 
&\simeq (Q\cP(M,-1)^{\nondeg}/G^2)\by \prod_{i \ge 2} \mmc(\DR(A/R)\hbar^i)
\end{align*}
  are weak equivalences for all $k \ge 2$.

\subsection{Diagrams and derived Artin stacks}

We now generalise the constructions of \S \ref{DMsn} to stacky CDGAs, allowing us to adapt arguments for DM stacks to apply to Artin stacks.

\subsubsection{Diagrams}\label{Artindiagramsn}

We may now proceed as in \cite[\S \ref{poisson-bidescentsn}]{poisson}. For any small category $I$, any $I$-diagram $A$ in $DG^+dg\CAlg(R)$, and $A$-modules $M,N$ in $I$-diagrams of chain cochain complexes, we define  $\cDiff_{A/R}(M,N)$ to be the equaliser of
\[
 \prod_{i \in I} \cDiff_{A(i)/R}(M(i),N(i)) \implies \prod_{f\co i \to j \text{ in } I}   \cDiff_{A(i)/R}(M(i),f_*N(j)).
\]

The constructions $Q\cP(-,-1)$, $G\PreSp(-,-1)$ and $Q\Comp(-,-1)$ all adapt to such diagrams, and behave well for $[m]$-diagrams $A$ which are fibrant and cofibrant for the injective $[m]$-diagram model structure on stacky CDGAs, so
 $A(i)$ is cofibrant for the model structure of Lemma \ref{bicdgamodel} and the maps $A(i) \to A(i+1)$ are all surjective. In particular, these constructions satisfy the conditions of \cite[\S \ref{poisson-bidescentsn}]{poisson}, so for each construction $F$ we have an $\infty$-functor
\[
 \oR F \co \oL DG^+dg\CAlg(R)^{\et} \to  \oL s\Set
\]
on the $\infty$-categories given by localising weak equivalences,
with $(\oR F)(A) \simeq F(A)$
for all cofibrant stacky CDGAs $A$ over $R$.  Here, $DG^+dg\CAlg(R)^{\et} \subset DG^+dg\CAlg(R)$ is the subcategory of morphisms $A \to B$ which are homotopy formally \'etale in the sense  of Definition \ref{hfetdef}.

By naturality of these constructions and the equivalences above, we then have weak equivalences of $\infty$-functors
\[
 \oR Q\Comp(-,-1)^{\nondeg} \to  \oR Q\cP(-,-1)^{\nondeg}
\]
and 
\begin{align*}
 \oR Q\cP(-,-1)^{\nondeg}/G^k &\to (\oR Q\cP(-,-1)^{\nondeg}/G^2)\by^h_{(\oR G\Sp(-,-1)/G^2)}(\oR G\Sp(-,-1)/G^k) 
\end{align*}
  for all $k \ge 2$.


The approach of \cite[\S \ref{poisson-bidescentsn}]{poisson} now applies immediately to associate  to any of the constructions above an $\infty$-functor on derived Artin $N$-stacks, with natural transformations and equivalences carrying over. However, this is not quite sufficient for our purposes, since we wish to consider quantisations of non-trivial line bundles.

\subsubsection{Descent and line bundles}\label{lbsn}

We say that a morphism $A \to B$ in $DG^+dg\CAlg(R)$ is a covering if $A^0 \to B^0$ is faithfully flat. In particular, this implies that 
\[                                                                                                                                                                  
\ho\LLim_i \oR\Spec D^iB \to \ho\LLim_i \oR\Spec D^iA                                                                                                   \]
 is a surjection of \'etale hypersheaves. 
 Note that when  $X \to Y$ is  a relative trivial derived Artin hypergroupoid, $X_0 \to Y_0$ is faithfully flat, so the morphism $D^*O(Y) \to D^*O(X)$ is a covering in the sense above. 

In \S \ref{DMdescentsn}, we were able to extend the functor $Q\cP$ to line bundles solely by making use of the $\bG_m$-action on it. For Artin stacks, the situation is more subtle, because for any $A \in DG^+\Alg(\Q)$, we have
$
 \Hom(D^*O(B\bG_m), A)  \cong \z^1A.
$

The most na\"ive simplicial set-valued functor we can consider on $DG^+dg\Alg(\Q)$ is $(B\bG_m)^{\Delta}\circ D$, which is represented by the cosimplicial CDGA $D^*O((B\bG_m)^{\Delta})$, and sends $A$ to the nerve
$
B[\z^1(\z_0A)/(\z_0A^0)^{\by}],
$ 
of the groupoid
\[
 \mathrm{TLB}(A):= [\z^1(\z_0A)/(\z_0A^0)^{\by}],
\]
where $f \in (A^0)^{\by}$ acts on $\z^1A$ by addition of $\pd \log f = f^{-1}\pd f$. We think of $\mathrm{TLB}(A)$  as the groupoid of trivial line bundles. 
 
For any cofibrant $A \in DG^+dg\Alg(R)$, we can extend $ Q\cP$   to a simplicial representation of the 
groupoid $\mathrm{TLB}(A)$ above by sending an object 
$b \in \z^1(\z_0A)$ to $Q\cP(A_b, -1)$, with $(\z_0A^0)^{\by}$ acting via functoriality for line bundles. Note that the quotient representation $Q\cP(-,-1)/G^1= \cP(-,-1)$ is trivial;  we also set $G\PreSp$ to be a trivial representation $b \mapsto G\PreSp(A)$.

\begin{definition}
For any of the constructions $F$ of \S \ref{Artindiagramsn}, let $\oR (F/^h\bG_m)$ be the $\infty$-functor on $\oL dg\CAlg(R)^{\et}$  given by applying the construction of  \cite[\S \ref{poisson-bidescentsn}]{poisson} to the right-derived functor of the Grothendieck construction
\[
 A \mapsto 
\holim_{\substack{ \lra \\ b \in \mathrm{TLB}(A)}} F(A_b),
\]
 then taking  hypersheafification with respect to homotopy formally  \'etale coverings.
\end{definition}

Given a derived Artin $N$-stack $\fX$, and $A \in DG^+dg\CAlg(R)$, we say that an element  $f \in \ho \Lim_i \fX(D^iA)$ is homotopy formally \'etale if the induced morphism 
\[
N_cf_0^*\bL_{\fX/R} \to \{ \Tot \sigma^{\le q} \oL\Omega^1_{A/R}\ten^{\oL}_AA^0\}_q
\]
 from \cite[\S \ref{poisson-Artintgtsn}]{poisson}  is a pro-quasi-isomorphism. We then write $(dg_+DG\Aff_{\et}\da \fX)$ for the $\infty$-category of homotopy formally \'etale elements $f \in \ho \Lim_i \fX(D^iA)$ with homotopy formally \'etale maps $A \to B$ between them.

\begin{definition}
 Given  a derived Artin $N$-stack $\fX$, a line bundle $\sL$ on $\fX$ and any of the functors $F$ above, define
$
  F(\sL)
$
 to be the homotopy limit of 
\[
\oR(F/^h\bG_m)(A)\by_{\oR (*/^h\bG_m)(A)}^h\{\sL|_{A}\}
\]
 over the $\infty$-category $(dg_+DG\Aff_{\et}\da \fX)$.
\end{definition}

If we now fix a derived Artin $N$-stack $\fX$, Definition \ref{nondegstack} carries over verbatim to give a notion of non-degeneracy for a $(-1)$-shifted Poisson structure $\pi \in \cP(\fX,-1)$, and Propositions \ref{quantprop2} and \ref{QcompatP2} readily adapt (substituting the relevant results from \cite[\S \ref{poisson-Artinsn}]{poisson}), giving

\begin{proposition}\label{prop3}
For any line bundle $\sL$ on $\fX$, the  canonical maps
\begin{align*} 
Q\Comp(\sL,-1)^{\nondeg} &\to  Q\cP(\sL,-1)^{\nondeg}\\ 
 Q\Comp(\sL,-1) &\to (Q\Comp(\sL,-1)/G^2) \by^h_{(G\PreSp(\fX,-1)/G^2)} G\PreSp(\fX,-1)\\
&\simeq (Q\Comp(\sL,-1)/G^2)\by \prod_{i \ge 2} \mmc(\DR(\fX/R)\hbar^i).
\end{align*}
are filtered weak equivalences.  In particular, there is a morphism
\[
  Q\cP(\sL,-1)^{\nondeg} \to G\Sp(\fX,-1)
\]
in the homotopy category of simplicial sets.
\end{proposition}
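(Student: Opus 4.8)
The strategy is to reprove Propositions~\ref{quantprop2} and~\ref{QcompatP2} in the bi-CDGA setting and then transport the conclusions to $\fX$ and $\sL$ by the descent machinery of \S\ref{Artindiagramsn}. First I would work over a single bi-CDGA $A$ over $R$ satisfying the three standing assumptions, with $M$ a strict line bundle over $A$, and establish the bi-CDGA analogues of Lemmas~\ref{CompSpObX} and~\ref{CompPObX}. The isomorphisms $F^{p-2i}\widehat{\Pol}(A,-1)\hbar^{i} \cong \gr_G^i(G*\tilde{F})^pQ\widehat{\Pol}(M,-1)$ recorded just after Definition~\ref{biQnonnegdef} (which use the second standing assumption on $A$) reduce both statements to computations on associated gradeds, where the $\hat{\Tot}$-completed internal Homs behave like the ordinary ones. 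For the analogue of Lemma~\ref{CompSpObX}, when $\pi$ is non-degenerate the map $\mu(-,\pi)$ induces $\Lambda^{p-2k}\pi^{\sharp}$ on $\gr_G^k\gr_F^p$, which is a quasi-isomorphism, so the homotopy fibre $\Ob(Q\Comp/Q\cP,A,k)_{\pi}$ is contractible. For the analogue of Lemma~\ref{CompPObX}, I would rerun the eigenvalue computation of Lemma~\ref{tangentlemma}: compatibility of $(\omega,\pi)$ makes $\pi^{\sharp}\circ\omega^{\sharp}$ homotopy idempotent, so $\hbar^{-1}\gr_F^p\gr_G^k\nu(\omega,\pi)$ is homotopy diagonalisable with integral eigenvalues in $[0,p]$, while $\hbar^{-1}\pd_{\hbar^{-1}}$ acts there by $1-p-k$; hence $\hbar^{-1}(\nu-\pd_{\hbar^{-1}})$ has eigenvalues in $[1-p-k,\,1-k]$, which is invertible once $k\ge 2$, giving contractibility of $\Ob(Q\Comp/Q\cS,A,k)_{(\omega,\pi)}$ for $k\ge 2$.

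Next, the obstruction maps of Lemmas~\ref{compSpob} and~\ref{compPob} adapt verbatim to bi-CDGAs. Combining them with the contractibility results above and inducting on $k$, exactly as in the proofs of Propositions~\ref{quantprop} and~\ref{quantprop2}, I would obtain for each $k\ge 2$ weak equivalences
\[
 Q\Comp(M,-1)^{\nondeg}/G^k \;\to\; Q\cP(M,-1)^{\nondeg}/G^k
\]
and
\[
 Q\Comp(M,-1)/G^{k+1} \;\to\; (Q\Comp(M,-1)/G^k)\by^h_{(G\PreSp(A/R,-1)/G^k)}(G\PreSp(A/R,-1)/G^{k+1}).
\]
Taking the homotopy limit over $k$, and using $Q\cP(M,-1)=\Lim_kQ\cP(M,-1)/G^k$ together with the analogous identities for the other two functors, gives the two displayed equivalences over $A$; the factorisation through $\prod_{i\ge2}\mmc(\DR(A/R)\hbar^i)$ is then read off from the fibration sequence for $G\PreSp(A/R,-1)$, exactly as in Proposition~\ref{quantprop}.

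For the descent step, I would note that, restricted to $[m]$-diagrams of bi-CDGAs that are fibrant and cofibrant for the injective model structure, the functors $Q\cP(-,-1)$, $Q\Comp(-,-1)$, $G\PreSp(-,-1)$ and all the associated obstruction functors satisfy Properties~\ref{Fproperties}, by the bi-CDGA versions of Lemmas~\ref{calcTOmegalemma} and~\ref{calcTlemma2} recalled in \S\ref{Artindiagramsn}; hence each descends to an $\infty$-functor $\oR F$ on $\oL DG^+dg\CAlg(R)^{\et}$, and the equivalences above become equivalences of $\infty$-functors. Applying the $\mathrm{TLB}$-Grothendieck-construction and homotopy formally \'etale hypersheafification of \S\ref{Artindiagramsn} to pass to line bundles, and then taking the homotopy limit over $(dg_+DG\Aff_{\et}\da\fX)$, transports the equivalences to $\fX$ and $\sL$ and gives the asserted filtered weak equivalences. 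The homotopy-category morphism $Q\cP(\sL,-1)^{\nondeg}\to G\Sp(\fX,-1)$ is then the composite of the inverse of the first equivalence with the leading-term map $Q\Comp(\sL,-1)^{\nondeg}\to G\Sp(\fX,-1)$ coming from non-degeneracy, exactly as in Proposition~\ref{QcompatP2}.

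The main obstacle I expect is bookkeeping rather than conceptual: one must check that the ``homotopy diagonalisable with integral eigenvalues'' argument of Lemma~\ref{tangentlemma} genuinely survives the passage to the $\hat{\Tot}$-completed, doubly graded setting, so that the operators involved are bounded and the eigenvalue estimate is meaningful, and that the line-bundle descent through $\mathrm{TLB}(A)$ is compatible with the $G$-cofiltration, so that the finite-level equivalences of the second step assemble correctly. The acyclicity assumption~(3) on $A$ is what keeps the various completions under control throughout, and is therefore precisely the hypothesis that makes these verifications go through.
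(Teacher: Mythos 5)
Your proposal is correct and follows essentially the same route as the paper, whose own proof of this proposition is little more than the remark that Propositions \ref{quantprop2} and \ref{QcompatP2} ``readily adapt'': the affine bi-CDGA equivalences are established exactly as you describe (via the associated-graded isomorphisms after Definition \ref{biQnonnegdef}, the eigenvalue argument of Lemma \ref{tangentlemma}, and the obstruction towers of Lemmas \ref{compSpob} and \ref{compPob}), and are then transported to $\fX$ and $\sL$ through the $\mathrm{TLB}$-Grothendieck construction and \'etale hypersheafification of \S\ref{Artindiagramsn}. Your closing caveats about the $\hat{\Tot}$-completion and the role of standing assumption (3) are precisely the points the paper delegates to the corresponding results of the companion paper on Poisson structures.
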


\section{Self-dual quantisations}\label{sdsn}

We now introduce the notion of duality for quantisations, and indicate how it leads to canonical quantisations for line bundles which are Grothendieck--Verdier self-dual, giving rise to the perverse sheaf $\cP\cV$ of vanishing cycles from \cite{BBDJS}. From our point of view, the key property of this sheaf is that it is Verdier self-dual \cite[Equation (2.6)]{BBDJS}, while the object it quantises is Grothendieck--Verdier self-dual.


\subsection{Duality}

We  wish to consider line bundles $\sL$ equipped with an involutive equivalence $\sD(\sL) \simeq \sD(\sL)^{\op}$. 

\begin{example}\label{stratDAex}
 Following Remark \ref{alternateDAremark}, the bar construction for left $\sD_A$-modules is just completion of the \v Cech nerve along the diagonal. This gives an equivalence between left $\sD_X$-modules and quasi-coherent sheaves on the stratified site of $X$ in the sense of \cite{Gr} (equivalently, on the presheaf $X_{\mathrm{strat}}(B):=  \im (X(B) \to X(\H_0B^{\red}))$). Since $\HHom_A(A\hten A,M)=\pr_1^!M$, where $!$ denotes exceptional pullback, it also means that  right $\sD$-modules correspond to $!$-quasi-coherent sheaves. 

Since there is a natural map from the stratified site to the infinitesimal site, this means that left and right crystals in the sense of \cite{GaitsgoryRozenblyumCrystal} give rise to left and right $\sD$-modules in our sense, and in particular that the dualising complex $\omega_X$ on $X$ naturally has the structure of a right $\sD$-module. 

A derived scheme $X$ is said to be Gorenstein when $\omega_X$ is a line bundle,  and if we write $\sE^{\vee}:= \oR\hom_{\sO_X}(\sE,\omega_X)$ for the Grothendieck--Verdier dual of a perfect complex $\sE$, then the right $\sD$-module structure of $\omega_X$ gives a quasi-isomorphism of DGAAs between  $\sD(\sE)^{\op}$ and $\sD(\sE^{\vee})$. 
Thus an equivalence between $ \sD(\sL)$ and $\sD(\sL)^{\op}$  is the same as $\sD(\sL) \simeq \sD(\sL^{\vee}) $, which  is satisfied when $\sL \simeq \sL^{\vee}$  (so $\sL$ is a square root of $\omega_X$). 
\end{example}
 Indeed, an equivalence will exist whenever
 $\sL$ has the structure of a right $\sD(\sL)$-module, or equivalently whenever
$\sL^{\ten 2} $ has the structure of a right $\sD$-module; the equivalence will automatically be involutive as $\sL$ has rank $1$.


\begin{definition}
For a line bundle $\sL$  with a right $\sD$-module structure on $\sL^{\ten 2}$, we  write $(-)^t \co  \sD(\sL)^{\op} \to \sD(\sL)$ for the natural quasi-isomorphism induced by the quasi-isomorphism $\sD^{\op} \simeq \sD(\sL^{\ten 2})$ given by the  right $\sD$-module structure. We then define
\[
 (-)^* \co Q\widehat{\Pol}(\sL,-1) \to Q\widehat{\Pol}(\sL,-1)
\]
by
\[
 \Delta^*(\hbar):= -\Delta^t(-\hbar).
\]
Since this is a quasi-isomorphism of filtered DGLAs, it gives rise to a weak equivalence 
\[
 (-)^* \co Q\widehat{\Pol}(\sL,-1) \to Q\widehat{\Pol}(\sL,-1),
\]
and hence
\[
 Q\cP(\sL,-1) \to Q\cP(\sL,-1)
\]
\end{definition}

The reason for the choice of sign $- \hbar$ in the definition of $\Delta^*$ is that on the associated graded $\gr^F_p \sD_X(\sE) \cong \Symm^p \sT_X$, the operation $(-)^t$ is given  by $(-1)^p$. Thus the underlying Poisson structures satisfy $\pi_{\Delta^*}= \pi_{\Delta}$.  

\begin{definition}\label{selfdualdef}
For a line bundle $\sL$ with $\sL^{\ten 2} $ a right $\sD$-module, the map $(-)^*$ becomes a (homotopy) involution of $Q\widehat{\Pol}(\sL,-1)$, and we define
 $Q\widehat{\Pol}(\sL,-1)^{sd}$ to be the space of homotopy fixed points of the resulting $\Z/2$-action. 

Similarly, we define the space 
\[
 Q\cP(\sL,-1)^{sd}
\]
 of self-dual quantisations to be  the space of  homotopy fixed points of the  $\Z/2$-action  on $Q\cP(\sL,-1)$ generated by $(-)^*$.
\end{definition}

\begin{remark}\label{rightDmodrmk}
Following Remark \ref{DRrmks}, to each  $E_0$ quantisation  $\Delta \in Q\cP(\sL,-1)$ there corresponds a right $\sD_{X}\llbracket \hbar \rrbracket$-module  $\sM_{\hbar}:=(\sL\ten_{\O_{X}}^{\oL}\sD_{X}\llbracket \hbar \rrbracket, \delta +\Delta\cdot\{-\})$. Definition \ref{selfdualdef} says that the quantisation is self-dual with respect to the right $\sD$-module structure on $\sL^{\ten 2} $ when  $\oR\hom_{\sD_X^{\op}\llbracket \hbar \rrbracket}(\sM_{\hbar},\sD_X\llbracket \hbar \rrbracket)\ten_{\O_X}^{\oL} \sL^{\ten 2}$ is equivalent to $\sM_{-\hbar}$ as a right $\sD$-module. This can be phrased as a symmetric perfect pairing
\[
 (\sM_{-\hbar}\ten_{\O_X}^{\oL} \sL^{-1})\ten^{\oL}_{\sD_X(\sL)\llbracket \hbar \rrbracket}( \sM_{\hbar}\ten_{\O_X}^{\oL} \sL^{-1})^{\op} \to \sD(\sL)\llbracket \hbar \rrbracket.
\]

 \end{remark}

\begin{lemma}\label{filtsd}
For the filtration $G$ induced on  $\tilde{F}^pQ\widehat{\Pol}(\sL,-1)^{sd}$ by the corresponding filtration on $\tilde{F}^p Q\widehat{\Pol}(\sL,-1)$, we have
\[
\gr_G^k  \tilde{F}^pQ\widehat{\Pol}(\sL,-1)^{sd} \simeq \begin{cases}
                                                         \gr_G^k  \tilde{F}^pQ\widehat{\Pol}(\sL,-1) & k \text{ even}\\
0 & k \text{ odd}.
                                                        \end{cases}
\]
\end{lemma}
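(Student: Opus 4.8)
The plan is to pass to the associated graded of the $\hbar$-adic filtration $G$, on which the duality involution acts by an explicit scalar. First I would record that the operation $(-)^*$ of the preceding definition --- built from the transposition $(-)^t$ (which involves no $\hbar$), the substitution $\hbar\mapsto-\hbar$, and the equivalence $Q\widehat{\Pol}(\sL^{\vee},-1)\simeq Q\widehat{\Pol}(\sL,-1)$ coming from the right $\sD$-module structure on $\sL^{\ten 2}$ --- is semilinear with respect to $\hbar\mapsto-\hbar$, so it carries $\hbar^kQ\widehat{\Pol}(\sL,-1)$ onto itself. Hence it preserves the filtration $G$ (as well as $\tilde{F}$), and therefore induces a $\Z/2$-action on each associated graded $\gr_G^k\tilde{F}^pQ\widehat{\Pol}(\sL,-1)$.

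Next I would compute this induced action. Writing an element of $Q\widehat{\Pol}(\sL,-1)$ as $\Delta=\sum_{j\ge 0}\Delta_j\hbar^{j-1}$ with $\Delta_j\in F_j\sD(\sL)$, the formula $\Delta^*(\hbar)=-\Delta^t(-\hbar)$ gives $(\Delta^*)_j=(-1)^j\Delta^t_j$. A class in $\gr_G^k\tilde{F}^pQ\widehat{\Pol}(\sL,-1)$ is represented by such a $\Delta$ with each $\Delta_j\in F_{j-k}\sD(\sL)$, and is determined by the symbols of the $\Delta_j$ in $\gr^F_{j-k}\sD(\sL)$. Since $(-)^t$ acts on $\gr^F_{j-k}\sD(\sL)$ by the scalar $(-1)^{j-k}$ (as recalled just before Definition \ref{selfdualdef}), and the equivalence $\sD(\sL^{\vee})\simeq\sD(\sL)$ is the identity on associated gradeds, the induced action multiplies the symbol of $\Delta_j$ by $(-1)^j(-1)^{j-k}=(-1)^k$. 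Thus $(-)^*$ acts on the whole of $\gr_G^k\tilde{F}^pQ\widehat{\Pol}(\sL,-1)$ by the single scalar $(-1)^k$, independently of $p$ and of the internal index $j$.

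Finally I would take homotopy fixed points. Since everything is $\Q$-linear, the order $2$ of the acting group is invertible, so the $\Z/2$-homotopy-fixed-point functor coincides with strict invariants and is exact; in particular it commutes with passage to $\gr_G$, whence $\gr_G^k\tilde{F}^pQ\widehat{\Pol}(\sL,-1)^{sd}\simeq(\gr_G^k\tilde{F}^pQ\widehat{\Pol}(\sL,-1))^{\Z/2}$. By the sign computation this is all of $\gr_G^k\tilde{F}^pQ\widehat{\Pol}(\sL,-1)$ when $k$ is even and vanishes when $k$ is odd, as claimed.

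The one genuinely delicate point, which I would set up first, is that $(-)^*$ is a priori only a homotopy involution --- the equivalence $\sD(\sL^{\vee})\simeq\sD(\sL)$ need be involutive only up to homotopy --- so that the assertion that $\gr_G$ commutes with $\Z/2$-homotopy-fixed-points requires a genuine $G$-filtered model of the action. In characteristic $0$ this is routine: one realises the homotopy fixed points as the image of the idempotent $\half(1+(-)^*)$ acting on a filtered cofibrant model, and the only thing to verify is that this idempotent respects $G$, which is immediate from the first paragraph. With that in place, the sign bookkeeping above is the entire content of the lemma.
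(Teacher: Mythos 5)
Your proposal is correct and is essentially the paper's argument: the paper likewise observes that the involution acts trivially on $\gr_G^0$ (this is the remark that $\pi_{\Delta^*}=\pi_{\Delta}$, i.e.\ your sign computation $(-1)^j(-1)^{j-k}$ specialised to $k=0$) and hence by $(-1)^k$ on $\gr_G^k=\hbar^k\gr_G^0$, before taking $\Z/2$-(homotopy) fixed points in characteristic zero. Your closing paragraph about realising the homotopy fixed points via the idempotent $\half(1+(-)^*)$ on a $G$-filtered model makes explicit a point the paper leaves implicit, but it is the same route.
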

\begin{proof}
As already observed, the involution acts trivially on $\gr_G^0Q\widehat{\Pol}(\sL,-1)$. It therefore acts as multiplication by $(-1)^k$ on   $ \gr_G^kQ\widehat{\Pol}(\sL,-1)= \hbar^k\gr_G^0Q\widehat{\Pol}(\sL,-1)$.
\end{proof}

In particular, this means that $\Ob(Q\cP,\sL,1)^{sd} \simeq 0$, so the map 
\[
 Q\cP(\sL,-1)^{sd}/G^2\to Q\cP(\sL,-1)/G^1\simeq \cP(X,-1)
\]
 is a weak equivalence. In other words, Poisson structures correspond to first order self-dual quantisations. We can say much more in non-degenerate cases:

\begin{proposition}\label{quantpropsd}
For a  line bundle $\sL$  with $\sL^{\ten 2} $ a right $\sD$-module (such as any square root of  $\omega_X$), there is a canonical weak equivalence
\[
  Q\cP(\sL,-1)^{\nondeg,sd} \simeq \cP(X,-1)^{\nondeg} \by \mmc(\hbar^2 \DR(X/R)\llbracket\hbar^2\rrbracket).
\]
In particular, every non-degenerate $(-1)$-shifted Poisson structure gives  a canonical choice of self-dual quantisation of $\sL$.
\end{proposition}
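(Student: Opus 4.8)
The plan is to establish the $\Z/2$-equivariant enhancement of Proposition \ref{prop3} (built on Propositions \ref{QcompatP1} and \ref{quantprop}), the $\Z/2$-action being generated by the homotopy involution $(-)^*$ of Definition \ref{selfdualdef}, and then pass to homotopy fixed points. Since $(-)^*$ is induced by a quasi-automorphism of $Q\widehat{\Pol}(\sL,-1)$ preserving the filtrations $\tilde F$, $G$ and $G*\tilde F$, it propagates to compatible involutions on $TQ\widehat{\Pol}(\sL,-1)$, on $G\PreSp(X/R,-1)$ (induced by $\hbar\mapsto-\hbar$ together with the evident sign on $\DR'(X/R)$) and on $Q\Comp(\sL,-1)$, under which $\mu$, $\nu$ and $\sigma=-\pd_{\hbar^{-1}}$ are equivariant --- the signs built into $\Delta^*(\hbar)=-\Delta^t(-\hbar)$ are exactly those forced by requiring $(-)^*$ to commute with these structure maps. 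All the weak equivalences in the proofs of Propositions \ref{QcompatP1} and \ref{quantprop} are canonical, hence $\Z/2$-equivariant, and since we work over $\Q$ the formation of homotopy fixed points commutes with the homotopy limits and with the Maurer--Cartan functors involved, so these equivalences restrict to weak equivalences between the self-dual subspaces.

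It then remains to identify the induced $\Z/2$-action on the factors of Proposition \ref{quantprop}. Because $(-)^t$ is the identity on $\gr^F_0\sD$, the involution $(-)^*$ is the identity on $\gr_G^0$, and hence, reasoning as in Lemma \ref{filtsd}, acts as multiplication by $(-1)^k$ on $\gr_G^k$ of each of $Q\widehat{\Pol}(\sL,-1)$, $TQ\widehat{\Pol}(\sL,-1)$, $T_\Delta Q\widehat{\Pol}(\sL,-1)$ and, correspondingly, on $\gr_G^k$ of $\DR'(X/R)\llbracket\hbar\rrbracket$. Thus the first factor $Q\cP(\sL,-1)^{\nondeg}/G^2$ has self-dual part $Q\cP(\sL,-1)^{\nondeg,sd}/G^2$, which by the remark following Lemma \ref{filtsd} (vanishing of $\Ob(Q\cP,\sL,1)^{sd}$) is weakly equivalent to $\cP(X,-1)^{\nondeg}$. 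For $i\ge 2$, the factor $\mmc(\DR(X/R)\hbar^i)$ arises from $\gr_G^i$, so carries the action $(-1)^i$; over $\Q$ the homotopy fixed points of a simplicial abelian group under a $\Z/2$-action by $-1$ are contractible, so its self-dual part is $\mmc(\DR(X/R)\hbar^i)$ for $i$ even and contractible for $i$ odd. Assembling, and using $\prod_{i\ge 2,\ i\text{ even}}\mmc(\DR(X/R)\hbar^i)\simeq\mmc(\hbar^2\DR(X/R)\llbracket\hbar^2\rrbracket)$, yields the claimed equivalence, the distinguished point $0$ of the second factor furnishing the canonical self-dual quantisation attached to any non-degenerate $(-1)$-shifted Poisson structure.

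The main obstacle I expect lies in the first paragraph: carefully checking that the sign conventions in $\Delta^*(\hbar)=-\Delta^t(-\hbar)$, in $\sigma=-\pd_{\hbar^{-1}}$, and in the associative-algebra description of $\mu(-,\Delta)$ (in which $a_0\ten\cdots\ten a_r\mapsto a_0\Delta a_1\Delta\cdots\Delta a_r$) cohere, so that $(-)^*$ is a genuine, or at worst coherently homotopy-coherent, symmetry of the diagram $(\mu-\sigma)\co G\PreSp(X/R,-1)\by Q\cP(\sL,-1)\to TQ^{tw}\cP(\sL,-1)$ over $Q^{tw}\cP(\sL,-1)$ defining $Q\Comp$; this is what licenses taking homotopy fixed points termwise in the obstruction tower. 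Once that equivariance is secured, the remaining computation via Lemma \ref{filtsd} and the representation theory of $\Z/2$ over $\Q$ is routine.
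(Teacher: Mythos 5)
Your route and the paper's share the same two ingredients --- Lemma \ref{filtsd} (the involution acts by $(-1)^k$ on $\gr_G^k$, so the self-dual associated gradeds vanish in odd weights and are unchanged in even weights) and the product decomposition of Proposition \ref{quantprop} --- but you organise them differently. The paper never asserts that the equivalence of Proposition \ref{quantprop} is $\Z/2$-equivariant: it works level by level in the tower $Q\cP(\sL,-1)^{sd}/G^k$, using Lemma \ref{filtsd} to show that $Q\cP^{sd}/G^{2i}\to Q\cP^{sd}/G^{2i-1}$ is an equivalence (odd-weight obstruction space and fibre both vanish) and that $Q\cP^{sd}/G^{2i+1}$ is the homotopy pullback of $Q\cP/G^{2i+1}\to Q\cP/G^{2i}\la Q\cP^{sd}/G^{2i}$ (even-weight gradeds agree), then feeds in the \emph{non-equivariant} statement of Proposition \ref{quantprop} to identify each even-weight fibre with $\mmc(\hbar^{2i}\DR(X/R))$, and passes to the limit. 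The only equivariance ever needed is the strict, weight-graded action on associated gradeds. Your version instead upgrades Propositions \ref{QcompatP1} and \ref{quantprop} to $\Z/2$-equivariant statements and takes homotopy fixed points of both sides; the payoff is a cleaner conceptual picture (and it essentially subsumes Remark \ref{oddcoeffsrmk} on the vanishing of odd coefficients of the compatible generalised symplectic structure), but at the cost of a substantially heavier verification.

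That verification is where your argument currently has a real gap. ``All the weak equivalences \ldots are canonical, hence $\Z/2$-equivariant'' is not a proof: canonicity only yields equivariance once the \emph{inputs} to the construction are equivariant, and here the duality $(-)^t$ is an anti-equivalence $\sD(\sL)\simeq\sD(\sL)^{\op}$ that interacts with the non-commutative map $\mu(a_0\ten\cdots\ten a_r,\Delta)=a_0\Delta a_1\cdots\Delta a_r$ only up to the homotopies recorded in Remark \ref{oddcoeffsrmk} ($\mu(\omega,\Delta)^t\simeq\mu(\omega,-\Delta^t)$, using graded-commutativity of $\DR'$ up to homotopy to reverse the tensor factors). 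To take homotopy fixed points of the diagram $(\mu-\sigma)\co G\PreSp\by Q\cP\to TQ^{tw}\cP$ you need these homotopies assembled into a coherent $\Z/2$-action on the whole diagram, not just a homotopy commutation; you flag this but do not discharge it. Two smaller points: the involution on $G\PreSp$ should be simply $\omega(\hbar)\mapsto\omega(-\hbar)$, with no additional sign on $\DR'(X/R)$ (be careful that whatever you mean by ``the evident sign'' is the identity); and the identification of the homotopy fixed points of $Q\cP^{\nondeg}/G^2$ with $\cP(X,-1)^{\nondeg}$ still requires the tower argument via Lemma \ref{filtsd}, so you cannot entirely avoid the paper's mechanism. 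If you want to keep your formulation, the most economical fix is to prove the equivariant statements by exactly the level-by-level obstruction argument the paper uses --- at which point the homotopy-fixed-point packaging becomes a corollary rather than the engine of the proof.
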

\begin{proof}
Lemma \ref{filtsd} implies that we have weak equivalences
\begin{align*}
 Q\cP(\sL,-1)^{sd}/G^{2i} &\to Q\cP(\sL,-1)^{sd}/G^{2i-1}\\
Q\cP(\sL,-1)^{sd}/G^{2i+1} &\to (Q\cP(\sL,-1)^{sd}/G^{2i})\by^h_{(Q\cP(\sL,-1)/G^{2i})}(Q\cP(\sL,-1)/G^{2i+1}).
\end{align*}

Combined with Propositions \ref{quantprop} and \ref{quantprop2}, these  give  weak equivalences
\[
 Q\cP(\sL,-1)^{\nondeg,sd}/G^{2i+1} \to (Q\cP(\sL,-1)^{\nondeg,sd}/G^{2i})\by \mmc(\hbar^{2i} \DR(X/R))
\]
for all $i>0$, so
\begin{align*}
 Q\cP(\sL,-1)^{\nondeg,sd}/G^{2i+1}&\simeq   (Q\cP(\sL,-1)^{\nondeg,sd}/G^{2i})\by \mmc(\hbar^{2i} \DR(X/R))\\
& \simeq Q\cP(\sL,-1)^{\nondeg}/G^{2i-1}\by \mmc(\hbar^{2i} \DR(X/R)),
\end{align*}
and we have seen that $*$ acts trivially on  $ Q\cP(\sL,-1)/G^1=\cP(\sL,-1) $, so $ Q\cP(\sL,-1)^{sd}/G^1\simeq \cP(\sL,-1)$.
\end{proof}

\begin{remark}\label{oddcoeffsrmk}
The proof of Proposition  \ref{quantpropsd} only shows that for a  self-dual quantisation of a non-degenerate $(-1)$-shifted Poisson structure, the corresponding generalised symplectic structure is determined by its even coefficients. In fact, the odd coefficients must be homotopic to $0$, with the following reasoning. 

As $\mu$ is multiplicative and the de Rham algebra is commutative,  we have a homotopy $\mu(\omega, \Delta)^t \simeq \mu(\omega, -\Delta^t)$ for any $\omega$ and $\Delta$, 
so $\mu(\omega, \Delta)^t(-\hbar)\simeq \mu(\omega, \Delta^*)(\hbar)$. We also have $\sigma(\Delta)^t(-\hbar) \simeq \sigma(\Delta^*)(\hbar)$, so $\omega(\hbar)$ is compatible with $\Delta$ if and only if $\omega(-\hbar)$ is compatible with $\Delta^*$. When $\Delta$ is self-dual and non-degenerate, this implies that  $\omega(\hbar) \simeq \omega(-\hbar)$. 

For a more explicit description of the generalised symplectic structure $\omega$ corresponding to a non-degenerate self-dual quantisation $\Delta$,  observe that we then have an isomorphism
\begin{align*}
 \mu(-, \Delta) \co  &\H^*( F^2\DR(X/R) \by  \hbar^2\DR(X/R)\llbracket\hbar^2\rrbracket) \\
&\to \{v \in \H^*(T_{\Delta}(G*\tilde{F})^2Q\widehat{\Pol}(\sL,-1))~:~ v(-\hbar)= v^t(\hbar)\},
\end{align*}
and that $[\omega]$ must be  the inverse image of $[\hbar^2 \frac{\pd \Delta}{\pd \hbar}]$.
\end{remark}


\subsection{Vanishing cycles}\label{vanishsn}

Given a smooth  scheme $Y$ of dimension $m$ over $\Cx$, and a function $f \co Y \to \bA^1$, we can consider the derived critical locus $X$ of $f$, which is equipped with a canonical $(-1)$-shifted symplectic structure $\omega$. Explicitly, $\sO_X$ is the CDGA given by the alternating algebra $\sO_Y[\sT_Y[1]]$, with differential $\delta$ given by contraction with $df$.

Now, the line bundle $\Omega^m_Y$ pulls back to give a square root $i^*\Omega^m_Y$ of the dualising sheaf $\omega_X$ on $X$. This complex can be written explicitly as $i^*\Omega^m_Y \cong (\Omega_Y^*[m], df\wedge)$. There is a canonical $(-1)$-shifted symplectic structure $\omega$ on $X$, and we write $\Delta_{\omega}$ for the unique compatible self-dual $E_0$ quantisation of Proposition \ref{quantpropsd}. 

\begin{lemma}\label{PVlemma}
 On the derived critical locus $X$, the quantisation $\Delta_{\omega}$ is 
given by $\hbar d \co \Omega_Y^* \to \Omega_Y^*\llbracket\hbar\rrbracket$, for the de Rham differential $d$ on $Y$.
\end{lemma}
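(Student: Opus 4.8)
The plan is to verify the claim by a direct computation on the explicit model $i^*\Omega^m_Y \cong (\Omega_Y^*[m], df\wedge)$, using the characterisation of $\Delta_\omega$ from Proposition~\ref{quantpropsd}: it is the unique self-dual $E_0$ quantisation whose associated generalised pre-symplectic structure is the canonical symplectic form $\omega$ on the derived critical locus (with all higher coefficients in $\hbar^2\DR(X/R)\llbracket\hbar^2\rrbracket$ taken to be $0$). So I would first check that $\Delta := \hbar d$ genuinely defines an $E_0$ quantisation of $\sL = i^*\Omega^m_Y$, then check it is self-dual, then check it is non-degenerate and compatible with $\omega$; uniqueness in Proposition~\ref{quantpropsd} then forces $\Delta_\omega = \hbar d$.

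\textbf{Step 1: $\hbar d$ is a Maurer--Cartan element of $\tilde F^2 Q\widehat\Pol(\sL,-1)$.} Here $\sO_X = \sO_Y[\sT_Y[1]]$ with differential $\delta = \iota_{df}$ (contraction with $df$), and $\sL^\# = \Omega_Y^*$ as a graded $\sO_X^\#$-module, with $\delta_\sL = df\wedge$. The de Rham differential $d$ on $\Omega_Y^*$ is a first-order differential operator on $\sL$, so $\hbar d \in F_1\sD_A(\sL)\hbar^1 \subset \tilde F^2 Q\widehat\Pol(\sL,-1)$, which lands in the right cohomological degree ($\hbar$ has degree $0$, and $d$ raises cohomological degree by $1$ in the total complex, since $\Omega_Y^k$ sits in degree $k-m$ on $\sL$; one must be careful that $d$ raises the de Rham grading but the relevant total degree bookkeeping works out to degree $1$). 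The Maurer--Cartan equation $\delta(\hbar d) + \tfrac12[\hbar d, \hbar d] = 0$ amounts to $\hbar^2 d^2 = 0$ together with $[\delta_\sL, \hbar d] = \hbar([df\wedge, d]) = 0$, i.e. the Cartan-type identity that $d(df\wedge\alpha) + df\wedge d\alpha = 0$, which holds since $d(df) = 0$. So the MC condition reduces to $d^2 = 0$ and $d\circ(df\wedge) + (df\wedge)\circ d = 0$, both standard.

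\textbf{Step 2: self-duality.} The square root $\sL = i^*\Omega^m_Y$ carries the standard right $\sD$-module structure on $\Omega^m_Y$ pulled back, so $\sL^{\ten 2}$ is a right $\sD$-module and the involution $(-)^*$ of Definition~\ref{selfdualdef} is defined. I would compute $\Delta^*(\hbar) = -\Delta^t(-\hbar)$ for $\Delta = \hbar d$: the transpose $(-)^t\co \sD(\sL)^{\op}\to\sD(\sL^\vee)$ sends the de Rham differential to $-d$ on the dual side (the adjoint of $d$ with respect to the right $\sD$-module pairing on top forms is $-d$, up to the sign conventions built into $(-)^t$), and combined with $\hbar \mapsto -\hbar$ one gets $\Delta^*(\hbar) = -(-d)(-\hbar) = \hbar d = \Delta$. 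So $\hbar d$ is (strictly, hence homotopy-) fixed by the $\Z/2$-action, giving a point of $Q\cP(\sL,-1)^{sd}$. The only delicacy is matching the sign conventions in $(-)^t$ and $(-)^*$ against the classical fact that the formal adjoint of exterior differentiation on $\Omega^m_Y$-valued distributions is $\pm d$.

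\textbf{Step 3: non-degeneracy and compatibility, then conclude.} The leading term is $\Delta_2 = d$ (the coefficient of $\hbar^1$; in the indexing $\Delta = \sum_{j\ge 2}\Delta_j\hbar^j$ one has $\Delta = \hbar d$ meaning $\Delta_2\hbar^2$... I would check the degree conventions carefully, but morally $\Delta_2^\sharp$ is induced by the symbol of $d$, which is the identity $\sO_X\ten\Omega^1_X \to \Omega^1_X$, manifestly a quasi-isomorphism, and $\Omega^1_X$ is perfect since $Y$ is smooth). Thus $\hbar d$ is non-degenerate. For compatibility, I would invoke Proposition~\ref{quantpropsd}: among non-degenerate self-dual quantisations, the one compatible with the canonical symplectic structure $\omega$ (and with vanishing higher coefficients) is unique; alternatively, one computes directly that $\mu(\omega, \hbar d) \simeq \hbar^2\partial_\hbar(\hbar d) = \hbar^2 d$ using the explicit formula for $\mu$ from Lemma~\ref{QPolmudef}, where $\omega$ is the de Rham class of the tautological symplectic form $df$ on $X$. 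Since $\hbar d$ satisfies all the defining properties of $\Delta_\omega$, and $\Delta_\omega$ is unique, we conclude $\Delta_\omega = \hbar d$.

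\textbf{Main obstacle.} The genuinely delicate part is Step 2 together with the compatibility check in Step 3 — not the existence of $\hbar d$ as a quantisation, which is a short Cartan-calculus computation, but pinning down that $\hbar d$ is the \emph{compatible} one. One must either (a) carefully trace the sign conventions so that the explicit $\mu(\omega,\hbar d)$ computation matches $\hbar^2\partial_\hbar \Delta$ on the nose (the twisted de Rham complex $(\Omega_Y^*\llbracket\hbar\rrbracket, df\wedge + \hbar d)$ is visibly the relevant object, but identifying its $\mu$-image with the canonical symplectic class requires knowing $\omega$ explicitly as $\sum \pm \,d(\text{coordinate 1-forms})$), or (b) argue more cheaply that $\hbar d$ is self-dual and non-degenerate, so by Proposition~\ref{quantpropsd} it corresponds to \emph{some} class in $\cP(X,-1)^{\nondeg}\times\mmc(\hbar^2\DR\llbracket\hbar^2\rrbracket)$, and then identify the Poisson-structure component as the one underlying $\omega$ (forced, since the symbol of $d$ recovers the symplectic-to-Poisson correspondence) and the higher component as $0$ (because $\hbar d$ is literally linear in $\hbar$, so it has no $\hbar^{\ge 3}$ part, forcing the $\hbar^2\DR\llbracket\hbar^2\rrbracket$-coefficient to vanish). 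Route (b) is cleaner and is the one I would write up.
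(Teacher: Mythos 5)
Your route (a) is essentially the paper's proof: Pridham checks self-duality by exhibiting an explicit pairing $(i^*\Omega^m_Y\llbracket\hbar\rrbracket,\delta+\hbar d)\ten(i^*\Omega^m_Y\llbracket\hbar\rrbracket,\delta-\hbar d)\to\omega_X\llbracket\hbar\rrbracket$ via the cup product, and then verifies compatibility by computing $\mu(-,\hbar d)$ on generators in local coordinates ($\mu(dy_i,\Delta)=\hbar\pd_{\eta_i}$, $\mu(d\eta_i,\Delta)=\hbar\pd_{y_i}$, whence $\mu(\sum_i dy_i\smile d\eta_i,\Delta)=\hbar^2\sum_i\pd_{\eta_i}\pd_{y_i}=\hbar^2\frac{\pd\Delta}{\pd\hbar}$). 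Had you written up route (a), you would have been reproducing the paper's argument.

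But you elected to write up route (b), and that route has a genuine gap. The product decomposition of Proposition \ref{quantpropsd},
\[
Q\cP(\sL,-1)^{\nondeg,sd}\simeq\cP(X,-1)^{\nondeg}\by\mmc(\hbar^2\DR(X/R)\llbracket\hbar^2\rrbracket),
\]
does \emph{not} send $\Delta$ to the $\hbar$-coefficients of $\Delta$ itself: the second factor records the higher $\hbar$-coefficients of the compatible generalised \emph{symplectic} structure, i.e.\ of $[\omega]=\mu(-,\Delta)^{-1}[\hbar^2\pd\Delta/\pd\hbar]$ (cf.\ Remark \ref{oddcoeffsrmk} and the construction of Proposition \ref{quantprop} via the tower $G\Sp/G^{k+1}\to G\Sp/G^k$). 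So the fact that $\Delta=\hbar d$ "has no $\hbar^{\ge 2}$ part" does not formally force its image in $\mmc(\hbar^2\DR\llbracket\hbar^2\rrbracket)$ to vanish; a priori $\mu(-,\Delta)^{-1}$ could carry the class $[\hbar^2 d]$ to a generalised pre-symplectic structure with nonzero $\hbar^{2i}$-terms. (There is no $\hbar$-weight argument rescuing this either: for instance $\Omega^0\hbar^2$ and $\Omega^2\hbar^0$ are separated in the target only by the filtration $\tilde F$, not by powers of $\hbar$.) To see that the higher coefficients vanish you must exhibit an $\hbar$-independent $\omega_0\in\z^1 F^2\DR'(X/R)$ with $\mu(\omega_0,\hbar d)\simeq\hbar^2 d$ and identify $\omega_0$ with the canonical symplectic form --- which is exactly the local-coordinate computation of route (a) that you were trying to avoid. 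Separately, your Step 2 sign-chase as written actually yields $\Delta^*(\hbar)=-\hbar d$ rather than $\hbar d$ (if $d^t=-d$ then $-\Delta^t(-\hbar)=-\hbar d$); the conclusion is correct, but you should either fix the convention for $(-)^t$ or, as the paper does, bypass the transpose entirely by writing down the symmetric pairing into $\omega_X\llbracket\hbar\rrbracket$ directly.
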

\begin{proof}
We first need to check that $\Delta:=\Delta_{\omega}$ is self-dual, but this follows because we have a pairing
\[
 (i^*\Omega^m_Y\llbracket\hbar\rrbracket, \delta + \hbar d)\ten_R (i^*\Omega^m_Y\llbracket\hbar\rrbracket, \delta - \hbar d) \to (i^*\Omega^m_Y)^{\ten 2}\llbracket\hbar\rrbracket= \omega_X\llbracket\hbar\rrbracket
\]
given by combining the cup product  $\Omega^*_Y[m]\ten\Omega^*_Y[m] \to \Omega^*_Y[2m]$ with projection to $\Omega^m_Y[m] $ followed by inclusion in $\omega_X \cong  \Omega^m_Y\ten_{\sO_Y} \Omega^*_Y[m]$.

Now, for $y \in \sO_Y \subset \sO_X$, the differential operator $[\Delta, y]$ acts on $\Omega^*_Y[m] $ as multiplication by $\hbar
dy$. For $\eta \in \sT_Y \subset \sO_X$, the differential operator $[\Delta, \eta]$ acts on $\Omega^*_Y[m] $ as the Lie derivative $\hbar\Lie_{\eta}$. Thus the ring homomorphism $\mu(-, \Delta)$ is given on generators of $\Omega^1_X$ by
\[
 \mu(dy,\Delta)= \hbar dy\wedge\,, \quad \mu(d\eta,\Delta) = \hbar\Lie_{\eta},
\]
 and we need to show that it maps  $\omega$ to $\hbar^2\frac{\pd \Delta}{\pd \hbar}= \hbar^2 d$, up to homotopy. 

There is a canonical representative  $\omega \in \z^{-1}\Gamma(X, \Omega^2_X)$ with $d\omega=0$, and on any affine open we may lift it to an element of   $\tilde{\omega} \in \z^1F^2\DR'(X/R)$ lying in  $N^2\hat{\O}_X^{\ten \bt +1}$. Any such choices differ by $d\alpha$ for $\alpha \in F^2N^1\hat{\O}_X^{\ten \bt +1}$; in other words, $\alpha \in I^2$, for $I= \ker(\hat{\O}_X^{\ten 2} \to \O_X)$. We now just observe that $\mu(\alpha, \Delta)$ is anti-self-dual, since $\mu(da.db,\Delta)= \hbar \<a,b\>_{\pi} \in \hbar \O_X$. Therefore  $\mu(\tilde{\omega},\Delta)+ \mu(\tilde{\omega},\Delta)^*$ is strictly independent of the choice of lift. 

It thus suffices to show that for some such local choice we have $\mu(\tilde{\omega},\Delta)=\sigma(\Delta)$.
 To make things explicit, we now take local co-ordinates $y_1, \ldots, y_m$, and write $\eta_i \in \sO_X$ for the element given by $\pd_{y_i} \in \sT_Y$,   so $X$ has co-ordinates $y_1, \ldots, y_m, \eta_1, \ldots, \eta_m$. The generator $dy_1 \wedge \ldots \wedge dy_m$ then gives an isomorphism $i^*\Omega^m_Y \cong \sO_X$, and $\Delta$ corresponds to the quantisation of $\sO_X$ given by
$\Delta :=\hbar\sum_i  \pd_{y_i}\pd_{\eta_i}$.
 
The calculations above reduce to 
\begin{align*}
 \mu(y_i\ten 1 - 1\ten y_i, \Delta)= \mu(dy_i, \Delta) &= \hbar \pd_{\eta_i}\\
\mu(\eta_i\ten 1 - 1\ten \eta_i, \Delta)= \mu(d\eta_i, \Delta) &= \hbar \pd_{y_i}.
\end{align*}

We now choose the lift $\tilde{\omega}:=\sum_i dy_i \smile d\eta_i \in \z^1F^2\DR'(X/R)$ of the  canonical $(-1)$-shifted symplectic structure $\omega= \sum_i dy_i \wedge d\eta_i \in \z^1F^2\DR(X/R)$.
Since $\mu(-, \Delta)$ is multiplicative, it follows that
\[
 \mu(dy_i \smile d\eta_i, \Delta) =  \hbar^2\pd_{\eta_i}\pd_{y_i},
\]
so 
\[
\mu(\tilde{\omega}, \Delta) = \mu(\sum_i dy_i \smile d\eta_i, \Delta)= \sum_i \hbar^2 \pd_{\eta_i}\pd_{y_i}=  \hbar^2\frac{\pd \Delta}{\pd \hbar}= \sigma(\Delta).
\]
 \end{proof}

\begin{proposition}\label{PVprop}
On the derived critical locus $(X, \omega)$, the localisation 
\[
 \oR\Gamma(X, (M\llbracket \hbar\rrbracket, \delta + \Delta_{\omega})\ten_{\Cx[\hbar]}\Cx[\hbar, \hbar^{-1}] 
\]
is quasi-isomorphic to
\[
 \oR\Gamma(X, \cP\cV^{\bt}_{Y,f})((\hbar)),
\]
for the perverse sheaf $\cP\cV$ of vanishing cycles from \cite[\S 2.4]{BBDJS}.
\end{proposition}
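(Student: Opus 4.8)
The plan is to reduce the statement to the classical description of vanishing cycles by twisted de~Rham complexes. By Lemma~\ref{PVlemma} the self-dual quantisation is $\Delta_\omega=\hbar d$, so using the identification $i^*\Omega^m_Y\cong(\Omega_Y^\bullet[m],df\wedge)$ recalled above, the module $(M\llbracket\hbar\rrbracket,\delta+\Delta_\omega)$ on $X$ is precisely the twisted de~Rham complex $(\Omega_Y^\bullet[m]\llbracket\hbar\rrbracket,\ df\wedge+\hbar d)$, regarded as a complex of algebraic sheaves on $Y$ concentrated in cohomological degrees $[-m,0]$. Since $X$ is the derived zero locus of $df\in\Gamma(Y,\Omega^1_Y)$, with $\sO_X=\Sym_{\sO_Y}(\sT_Y[1])$ sitting as a sheaf of CDGAs on $Y$, we have $\oR\Gamma(X,-)=\oR\Gamma(Y,-)$ for such modules, so after inverting $\hbar$
\[
\oR\Gamma\bigl(X,(M\llbracket\hbar\rrbracket,\delta+\Delta_\omega)\bigr)\ten_{\Cx[\hbar]}\Cx[\hbar,\hbar^{-1}]\ \simeq\ \oR\Gamma\bigl(Y,(\Omega_Y^\bullet[m]((\hbar)),\,df\wedge+\hbar d)\bigr),
\]
and we may pass to the analytic site with $\Cx((\hbar))$-coefficients.

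Next I would localise the problem to a neighbourhood of the critical locus. Modulo $\hbar$ the complex $(\Omega_Y^\bullet[m]\llbracket\hbar\rrbracket,df\wedge+\hbar d)$ is the Koszul complex $(\Omega_Y^\bullet[m],df\wedge)$, which is acyclic wherever $df$ does not vanish; by completeness in $\hbar$ (the complex being bounded in cohomological degree) this acyclicity lifts, and survives inverting $\hbar$, so the complex is supported on the classical critical locus $\mathrm{Crit}(f)=X^{\an}$, just as $\cP\cV^\bullet_{Y,f}$ is. Around a critical point $p$ with critical value $c$, put $g:=f-c$; then $df\wedge+\hbar d=dg\wedge+\hbar d$, and on a small enough neighbourhood $g$ has $0$ as its only critical value. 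The classical comparison theorem --- which for the de~Rham complex $(\Omega^\bullet((\hbar))[m],\ \hbar d+dg)$ of the exponentially twisted connection $d+dg/\hbar$ identifies it with the perverse vanishing cycle complex ${}^p\phi_g(\Cx_Y[m])$ with $\Cx((\hbar))$-coefficients (going back to Brieskorn, Pham, Malgrange and M.~Saito in the isolated case, and to Sabbah and subsequent work on irregular Hodge theory in general) --- then supplies a local isomorphism of complexes of analytic sheaves. Because these local models depend only on $df$, they glue to an isomorphism with $\cP\cV^\bullet_{Y,f}((\hbar))$ on all of $X^{\an}$; here the fact that we have fixed the global square root $i^*\Omega^m_Y$ of $\omega_X$ is exactly what makes the glued object agree with the sheaf $\cP\cV^\bullet_X$ of \cite{BBDJS} with no residual gerbe twist, and it is also what rigidifies the identification of Lemma~\ref{PVlemma}. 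Applying $\oR\Gamma(X^{\an},-)$ yields the asserted isomorphism.

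The main obstacle is the comparison theorem itself: producing (or precisely citing) the sheaf-level quasi-isomorphism between $(\Omega^\bullet((\hbar))[m],\ \hbar d+dg)$ and ${}^p\phi_g(\Cx[m])((\hbar))$. Two points need care. First, the passage from the algebraic twisted de~Rham complex on $Y$ to the analytic one is legitimate here only because we have already restricted to a neighbourhood of $\mathrm{Crit}(f)$, where the irregular connection $d+dg/\hbar$ contributes no ``cohomology at infinity''. Second, coefficients: the comparison theorem gives cohomology already finite-dimensional over $\Cx[\hbar,\hbar^{-1}]$, so tensoring up to $\Cx((\hbar))$ to match the right-hand side changes nothing. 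A minor additional check, immediate from the naturality of all the constructions in the pair $(Y,f)$ together with the chosen square root, is that the resulting isomorphism is canonical and compatible with the quantisation $\Delta_\omega$ of Proposition~\ref{quantpropsd}, not merely abstract.
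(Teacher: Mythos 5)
Your proposal is correct and follows essentially the same route as the paper: identify $(M\llbracket\hbar\rrbracket,\delta+\Delta_\omega)$ with the twisted de Rham complex $(\Omega^\bullet_Y[m],\,df\wedge+\hbar d)$ via Lemma \ref{PVlemma} and then invoke the comparison with vanishing cycles. The paper discharges what you call the ``main obstacle'' simply by citing Sabbah's Theorem 1.1 of \emph{On a twisted de Rham complex, II}, which already gives the identification at the level of $\oR\Gamma$ (after rescaling by $\hbar^i$ in degree $i$ and setting $\hbar=-u$), so your sheaf-level localisation and gluing discussion, while sound, is not needed for the statement as formulated.
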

\begin{proof}
By \cite[Theorem 1.1]{SabbahTwistedII}, 
\[
 \oR\Gamma(X, \cP\cV^{\bt}_{Y,f})((u)) \simeq (\Omega^*_Y((u)), d - u^{-1}df)[m].
\]
Multiplying by $\hbar^i$ in degree $i$, and setting $\hbar=-u$, we get
\[
 \oR\Gamma(X, \cP\cV^{\bt}_{Y,f})((\hbar)) \simeq (\Omega^*_Y((\hbar)), df + \hbar d)[m];
\]
 Lemma \ref{PVlemma} completes the proof.
\end{proof}

\subsection{Quantisation for $n$-shifted symplectic structures}\label{nonnegsn}

We now discuss how to generalise these results to more general $n$-shifted structures, including the non-trivial cases $n=0,-2$. 

\subsubsection{Unshifted Poisson structures}
To address the case $n=0$, replace the filtered DGAA $\sD_{A}$ of differential operators with the filtered DGLA  $\sD_{A}^{\mathrm{poly}}[1]$ of polydifferential operators, setting 
$
 Q\widehat{\Pol}(A,0):= \prod_{p \ge 0}F_p\sD_{A}^{\mathrm{poly}}\hbar^{p-1}.
$
As in \cite{vdberghGlobalDQ}, the HKR isomorphism leads to a quasi-isomorphism between $\sD_{A}^{\mathrm{poly}}$ and the Hochschild complex of $A$ over $R$. 

For a quantisation 
\[
 \Delta \in Q\cP(A,0):= \mmc(Q\widehat{\Pol}(A,0)[1]),
\]
 the centre $T_{\Delta}Q\widehat{\Pol}(A,0):= (\hbar Q\widehat{\Pol}(A,0), \delta + [\Delta,-])$ then has the canonical structure of an $E_2$-algebra.
A choice of formality isomorphism for $E_2$ will therefore give a $P_2$-algebra structure on $T_{\Delta}Q\widehat{\Pol}(A,0)$, and we may define a CDGA map 
\[
 \mu(-,\Delta) \co \DR(A) \to T_{\Delta}Q\widehat{\Pol}(A,0)
\]
determined on generators by $\mu(a, \Delta)=a$, $\mu(df, \Delta)= [\Delta, f]$ for $a,f \in A$. This will be a quasi-isomorphism when $\Delta$ is non-degenerate.

Such a construction  yields  analogues of all the results in \S\S \ref{affinesn}, \ref{DMsn}, with a cohomological shift. In particular,
there is a map from quantisations to $\prod_{k\ge 0}\H^{2}( F^{2-2k}\DR(A)) \hbar^{k}$, and 
  the analogue of Propositions \ref{quantprop}, \ref{quantprop2}  says that a deformation quantisation of a non-degenerate Poisson structure exists  whenever it can be quantised to first order. The analogue of self-dual $E_0$-quantisations are DQ algebroid quantisations $\sB_{\hbar}$ equipped with involutions  $\sB_{-\hbar} \simeq \sB_{\hbar}^{\op}$ --- for details, see \cite{DQnonneg}.


\subsubsection{Positively shifted Poisson structures}
For $n\ge 1$, we can likewise define $Q\widehat{\Pol}(A,n) $ in terms of shifted differential operators or $E_{n+1}$-Hochschild complexes over $\Rees(\DR(A))$. However, formality of $E_{n+1}$ should yield  equivalences $Q\widehat{\Pol}(A,n) \simeq \widehat{\Pol}(A,n)\llbracket \hbar \rrbracket$ and $Q\cP(A/R,n) \simeq Q\cP(A\llbracket \hbar\rrbracket/R\llbracket \hbar \rrbracket,n)$, making the analogues of Propositions \ref{QcompatP1} and \ref{quantprop} less interesting.

(We might also wish to quantise higher analogues of line bundles: for  derived  stacks, these should be classes in $\H^{n+2}(\fX, \bG_m)$.  For unbounded CDGAs $A$, the analogue of the strict line bundle $A_b$ of Definition \ref{strictlb} is the curved $E_{n+1}$-algebra $(A,c)$ for $c \in \z^{n+2}(A)$ --- because $A$ is an $E_{\infty}$ algebra, its Lie bracket is trivial,  so we still have $\delta^2= [c,-]$.)

\subsubsection{$(-2)$-shifted Poisson structures}

For $n \ge -1$, there is a canonical $E_{n+2}$-algebra quantisation  $\hbar Q\Pol(A,n)$ of $\hbar \Pol(A,n)$ given by  $(n+1)$-shifted differential operators, which is equipped with an $E_{n+2}$-algebra morphism $A \to \hbar Q\Pol(A,n)$.
In order to adapt the techniques of this paper to $(-2)$-shifted symplectic structures, \cite{DQ-2} starts with the data of an $E_0$ quantisation 
\[
 \hbar Q\widehat{\Pol}(A,-2):= (\hbar\widehat{\Pol}(A,-2)\llbracket \hbar \rrbracket, \delta + \Delta)
\]
 of the $P_0$-algebra $\hbar \widehat{\Pol}(A,-2)$, with $\Delta \in Q\cP(\hbar\Pol(A,-2),-1)$ satisfying $\Delta(A)=0$ (but not necessarily $A$-linear) and lifting the 
canonical Poisson bracket on $\hbar \widehat{\Pol}(A,-2)$. 

In particular, the condition $\Delta(A)=0$ implies $\Delta(1)=0$, making $\hbar Q\widehat{\Pol}(A,-2)$   a $BV_{\infty}$-algebra. It thus suffices to have a right $\sD$-module structure on $A$,  and to define $\hbar Q\widehat{\Pol}(A,-2)$ in terms of the right de Rham complex of $A$, as in \cite{schechtmanRksBV}. However, $(-2)$-shifted symplectic derived schemes are seldom Gorenstein, so a right $\sD$-module structure on $A$ is not necessarily equivalent to a left $\sD$-module structure on the dualising complex $\omega_A$. 

 Given a $(-2)$-shifted Poisson structure $\pi \in \cP(A,-2)$, \cite{DQ-2}  then defines an $E_{-1}$ quantisation of $\pi$ to be a lift of $\hbar\pi$ to an element $\hbar S$ of the $BV_{\infty}$-algebra $\hbar \tilde{F}^2Q\widehat{\Pol}(A,-2)$ satisfying the $L_{\infty}$ Maurer--Cartan equation, or equivalently the quantum master equation $(\delta + \Delta) e^{S}=0$. When there exists a morphism $\DR^r(A) \to \DR^r(\omega_A)[-v\dim A]$ of right de Rham cohomology complexes, this  gives rise to an element of degree $v\dim A$ in Borel--Moore homology, permitting comparison with \cite{BorisovJoyce}. 

 Writing $\Delta_S := \ad_{(e^{-S})}\Delta + \delta S$, 
the centre $T_SQ\widehat{\Pol}(A,-2)$ of $S$ is defined to be the complex $(\hbar\widehat{\Pol}(A,-2)\llbracket \hbar \rrbracket,\delta + \Delta_S)$. We can then define a  compatibility map $\mu$ by composing the map 
\[
 \DR'(A) \to \DR'(\Pol(A,-2)) \xra{ \mu(-, \Delta_S)}  T_{\Delta_S}Q\widehat{\Pol}(\Pol(A,-2),-1)
\]
from this paper with the evaluation of differential operators at $1$. Since $\Delta_S(1)=0$ by the quantum master equation, this gives a filtered map
\[
 \mu(-, S)\co \DR'(A) \to T_SQ\widehat{\Pol}(A,-2),
\]
and a $(-2)$-shifted pre-symplectic structure $\omega$ is then said to be compatible with $S$ when $\mu(\omega, S) \simeq \hbar^2  \frac{\pd S}{\pd \hbar}$. 

To first order, we have $\Delta_S\cong \Delta + \{\pi,-\}$; since $\Delta(A)=0$, the map  $\mu(-, S) \co \DR'(A) \to T_SQ\widehat{\Pol}(A,-2)/G^1 = T_{\pi}\widehat{\Pol}(A,-2)$ is then just the compatibility map $\mu(-, \pi)$ from \cite{poisson}.

\subsubsection{Self-duality}
In \S \ref{sdsn}, the key to self-duality for $E_0$ quantisations of $\sqrt \omega_X$  is the filtered  involution $(-)^t \co \sD_X(\sqrt \omega_X)\simeq \sD_X(\sqrt \omega_X)^{\op}$. For $n\ge 0$,  HKR isomorphisms mean that the analogue of $\sD_A$ is the higher Hochschild cohomology complex $\HH^{E_{n+1}}(A)$ with its $E_{n+2}$-algebra structure. In order to define self-dual quantisations, we would thus need a filtered  involution $(-)^t\co \HH^{E_{n+1}}(A) \simeq \HH^{E_{n+1}}(A)^{\op}$, lifting the $P_{n+2}$-algebra involution of $\widehat{\Pol}(X, n)$ given by $(-1)^m$ on $m$-vectors. 

Of course, when $n >0$,  the equivalence $Q\widehat{\Pol}(A,n) \simeq \widehat{\Pol}(A,n)\llbracket \hbar \rrbracket$ coming from formality of $E_{n+1}$  allows us to transfer the involution on $\widehat{\Pol}(A,n) $ to $Q\widehat{\Pol}(A,n)$. We then have
$Q\widehat{\Pol}(A,n)^{sd} \simeq \widehat{\Pol}(A,n)\llbracket \hbar^2 \rrbracket$, giving  a sense in which the canonical quantisations coming from formality of $E_{n+1}$ are self-dual. 

Whereas Grothendieck--Verdier self-duality for a line bundle $\sL$ is an involutive equivalence $\sL \simeq \oR\cHom_{\O_X}(-, \omega_X)$, the obvious notion of self duality for an algebroid $\cA$ is an involution $\cA \simeq \cA^{\op}$. 
When $n=0$,  an involutive  filtered $E_2$-equivalence  on the Hochschild complex of $X$  gives an analogue of Proposition \ref{quantpropsd}, generating self-dual quantisations from symplectic structures. This amounts to looking for     DQ algebroid quantisations $\cA$ equipped with involutions $\cA(-\hbar) \simeq \cA(\hbar)^{\op}$ deforming a chosen contravariant involution on the Picard algebroid (or even any $2$-line bundle). Such  involutions correspond to line bundles, the obvious choices being  $\oR\cHom_{\O_X}(-, \O_X)$ and $\oR\cHom_{\O_X}(-, \omega_X)$, and involutive $2$-line bundles are $\mu_2$-gerbes. For more details, see \cite{DQnonneg}.


\bibliographystyle{alphanum}
\bibliography{references.bib}
\end{document}